\theoremstyle{plain}
\newtheorem{theorem}{Theorem}[section]
\newtheorem{lemma}[theorem]{Lemma}
\newtheorem{proposition}[theorem]{Proposition}
\theoremstyle{definition}
\newtheorem{definition}[theorem]{Definition}
\newtheorem{corollary}[theorem]{Corollary}
\newtheorem{example}[theorem]{Example}
\theoremstyle{remark}
\newtheorem{remark}{\sc Remark}
\def\namedlabel#1#2{\begingroup
   \def\@currentlabel{#2}%
   \label{#1}\endgroup}
\date{}
\title{\bf  Quasicomplemented residuated lattices}\vspace{.25 in}
\author{ \vspace{.25 in} {\bf  Saeed Rasouli}\\
Department of Mathematics,
Persian Gulf University, \\Bushehr, 75169, IRAN\\
{\tt srasouli@pgu.ac.ir }\\\
}
\begin{document}
 \maketitle
 \begin{abstract}
  In this paper, the class of quasicomplemented residuated lattices is introduced and investigated, as a subclass of residuated lattices in which any prime filter not containing any dense element is a minimal prime filter. The notion of disjunctive residuated lattices is introduced and it is observed that a residuated lattice is Boolean if and only if it is disjunctive and quasicomplemented. Finally, some characterizations for quasicomplemented residuated lattices are given by means of the new notion of $\alpha$-filters.
\footnote{2010 Mathematics Subject Classification: 06F99,06D20 \\
{\it Key words and phrases}: quasicomplemented residuated lattice; disjunctive residuated lattice; $\alpha$-filter.}
\end{abstract}
\section{Introduction}

As a generalization of distributive pseudo-complemented lattices, \cite{var1} studied lattices which are just pseudo-complemented. He observed that a distributive lattice is pseudo-complemented if and only if each of its annulet is principal. By this motivation, \cite{val} introduced the notion of quasi-complemented lattices as a generalization of distributive pseudo-complemented lattices. Also, \cite{spe0} introduced the class of $\star$-lattices as a subclass of distributive lattices. \citet[Proposition 3.4]{spe} proved that these two classes are equivalent. Quasi-complemented lattices are studied extensively by \cite{cor,cor1,jay,spe2}. Also, this notion is discussed for rings in \cite{kno}. In this paper, we introduce the notion of quasicomplemented residuated lattice and generalize some results of \cite{cor1} and \cite{spe} in this class of algebras.

This paper is organized in five sections as follow: In Sec. \ref{sec2}, some definitions and facts about residuated lattices are recalled and some of their propositions are proved. In Sec. \ref{sec3}, the notion of quasicomplemented residuated lattices, as a subclass of residuated lattices, is introduced and some of their properties are investigated. In Sec. \ref{sec4}, notions of disjunctive and weakly disjunctive residuated lattices are introduced and some of their characterizations are derived. It is proved that the lattice of a residuated lattice principal filters is Boolean if and only if it is weakly disjunctive and quasicomplemented. In Sec. \ref{sec5}, the notion of $\alpha$-filters is introduced and some of their properties are studied. Weakly disjunctive residuated lattices are characterized in terms of $\alpha$-filters and it is shown that a residuated lattice is quasicomplemented if and only if any its prime $\alpha$-filter is a minimal prime filter. We end this paper by deriving a set of equivalent conditions for any $\alpha$-filter to be principal.
\section{Definitions and first properties}\label{sec2}

In this section, we recall some definitions, properties and results relative to residuated lattices, which will be used
in the following.

An algebra $\mathfrak{A}=(A;\vee,\wedge,\odot,\rightarrow,0,1)$ is called a \textit{residuated lattice} if $\ell(\mathfrak{A})=(A;\vee,\wedge,0,1)$ is a bounded lattice, $(A;\odot,1)$ is a commutative monoid and $(\odot,\rightarrow)$ is an adjoint pair. In a residuated lattice $\mathfrak{A}$, for any $a\in A$, we put $\neg a:=a\rightarrow 0$ and for any integer $n$ we right $x^{n}$ instead of $x\odot\cdots\odot x$ ($n$ times). An element $\mathfrak{A}$ is called \textit{nilpotent} if $x^{n}=0$ for an integer $n$. The set of nilpotent elements of $\mathfrak{A}$ shall be denoted by $N(\mathfrak{A})$. It is well-known that $N(\mathfrak{A})$ is an ideal of $\ell(\mathfrak{A})$. \cite{idz} showed that the class of residuated lattices is equational, and so it forms a variety. The properties of residuated lattices were presented in \cite{gal}. For a survey of residuated lattices we refer to \cite{jip}.
\begin{remark}\label{resproposition}\citep[Proposition 2.2]{jip}
Let $\mathfrak{A}$ be a residuated lattice. The following conditions are satisfied for any $x,y,z\in A$:
\begin{enumerate}
  \item [$r_{1}$ \namedlabel{res1}{$r_{1}$}] $x\odot (y\vee z)=(x\odot y)\vee (x\odot z)$;
  \item [$r_{2}$ \namedlabel{res2}{$r_{2}$}] $x\vee (y\odot z)\geq (x\vee y)\odot (x\vee z)$.
  \end{enumerate}
\end{remark}

Let $\mathfrak{A}$ be a residuated lattice. The set of all complemented elements in $\ell(\mathfrak{A})$ is denoted by $B(\mathfrak{A})$ and it is called the Boolean center of $\mathfrak{A}$. For a survey of residuated lattices we refer to \cite{ciu1}.
\begin{proposition}\cite{ciu1}\label{1propos}
Let $\mathfrak{A}$ be a residuated lattice. The following assertions hold for any $e\in B(\mathfrak{A})$ and $a\in A$:
\begin{enumerate}
  \item\label{1propos1} $e^c=\neg e$;
  \item\label{1propos2} $e^n=e$, for each integer $n$;
  \item\label{1propos3} $e\odot a=a\odot e=e\wedge a$;
  \item\label{1propos4} $\neg\neg e=e$.
\end{enumerate}
\end{proposition}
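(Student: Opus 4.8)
The plan is to deduce all four items from three basic facts about residuated lattices: the adjunction $x\odot y\le z\iff x\le y\to z$; the inequality $x\odot y\le x\wedge y$ (since $x\le 1$ and $y\le 1$ force $x\odot y\le 1\odot y=y$ and $x\odot y\le x\odot 1=x$ by monotonicity of $\odot$); and the distributivity law $\ref{res1}$. Throughout, fix $e\in B(\mathfrak{A})$ with complement $e^c$ in $\ell(\mathfrak{A})$, so that $e\vee e^c=1$ and $e\wedge e^c=0$; note also that $e^c$ is again complemented, its complement being $e$.

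I would start with \ref{1propos1}. From $e\odot e^c\le e\wedge e^c=0$ we get $e\odot e^c=0$, and adjunction yields $e^c\le e\to 0=\neg e$. For the reverse inequality, adjunction applied to $e\to 0\le e\to 0$ gives $e\odot\neg e\le 0$, hence $e\odot\neg e=0$, and then $\ref{res1}$ gives
\[
\neg e=\neg e\odot 1=\neg e\odot(e\vee e^c)=(\neg e\odot e)\vee(\neg e\odot e^c)=\neg e\odot e^c\le e^c ,
\]
so $\neg e=e^c$. Item \ref{1propos3} is proved the same way: one always has $e\odot a\le e\wedge a$, while $(e\wedge a)\odot e^c\le e\odot e^c=0$ together with $\ref{res1}$ give
\[
e\wedge a=(e\wedge a)\odot(e\vee e^c)=\big((e\wedge a)\odot e\big)\vee\big((e\wedge a)\odot e^c\big)=(e\wedge a)\odot e\le a\odot e=e\odot a ,
\]
using commutativity of $\odot$ for the last equality; hence $e\odot a=a\odot e=e\wedge a$.

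The remaining two items follow quickly. Taking $a=e$ in \ref{1propos3} gives $e^2=e\wedge e=e$, and an induction yields $e^n=e$ for every positive integer $n$, which is \ref{1propos2}. For \ref{1propos4}, applying \ref{1propos1} to the complemented element $e^c$ (whose complement is $e$) gives $\neg(e^c)=e$; combined with $\neg e=e^c$ from \ref{1propos1}, this gives $\neg\neg e=\neg(e^c)=e$.

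There is no real obstacle here: the whole argument is routine, and the one recurring idea is to multiply the identity $1=e\vee e^c$ into a product and then distribute via $\ref{res1}$, thereby converting the lattice-theoretic data $e\wedge e^c=0$ and $e\vee e^c=1$ into the multiplicative identities being claimed.
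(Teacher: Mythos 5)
Your proof is correct, and it is worth noting that the paper itself offers no argument for this proposition: it is simply imported from Ciungu's paper (the reference \cite{ciu1}), so there is no in-paper proof to compare against. Your derivation is self-contained and uses only what the paper makes available: integrality (so $x\odot y\le x\wedge y$), the adjunction, and the distribution law \ref{res1}. Each step checks out: $e\odot e^c=0$ gives $e^c\le\neg e$ by adjunction; multiplying $1=e\vee e^c$ by $\neg e$ and distributing gives $\neg e\le e^c$, proving \ref{1propos1}; the same device with $e\wedge a$ in place of $\neg e$ gives \ref{1propos3}; \ref{1propos2} follows by taking $a=e$ and inducting; and \ref{1propos4} follows by applying \ref{1propos1} to $e^c$, whose complement is $e$. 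A small bonus of your argument: since the underlying lattice of a residuated lattice need not be distributive, the notation $e^c$ in the statement tacitly presupposes uniqueness of complements, and your proof of \ref{1propos1} actually establishes this (any complement of $e$ is forced to equal $\neg e$), which is exactly what legitimizes the step $\neg(e^c)=e$ in \ref{1propos4}.
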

\begin{example}\label{rex1}
Let $A_7=\{0,a,b,c,d,e,1\}$ be a lattice whose Hasse diagram is below (see Figure \ref{graph7}).  Define $\odot$ and $\rightarrow$ on $A_7$ as follows:
\begin{eqnarray*}
\begin{array}{l|lllllll}
  \odot & 0 & a & b & c & d & e & 1   \\ \hline
    0   & 0 & 0 & 0 & 0 & 0 & 0 & 0  \\
    a   & 0 & a & a & a & a & a & a  \\
    b   & 0 & a & b & a & b & a & b  \\
    c   & 0 & a & a & a & a & c & c  \\
    d   & 0 & a & b & a & b & c & d  \\
    e   & 0 & a & a & c & c & e & e  \\
    1   & 0 & a & b & c & d & e & 1
  \end{array}& \hspace{1cm} &
  \begin{array}{l|lllllll}
  \rightarrow & 0 & a & b & c & d & e & 1   \\ \hline
    0         & 1 & 1 & 1 & 1 & 1 & 1 & 1  \\
    a         & 0 & 1 & 1 & 1 & 1 & 1 & 1  \\
    b         & 0 & e & 1 & e & 1 & e & 1  \\
    c         & 0 & d & d & 1 & 1 & 1 & 1  \\
    d         & 0 & c & d & e & 1 & e & 1  \\
    e         & 0 & b & b & d & d & 1 & 1  \\
    1         & 0 & a & b & c & d & e & 1
  \end{array}
\end{eqnarray*}
\begin{figure}[h]
\centering
\includegraphics[scale=.15]{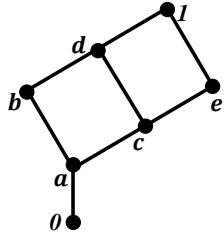}
\caption{The Hasse diagram of $\mathfrak{A}_7$.}
\label{graph7}
\end{figure}
Routine calculation shows that  $\mathfrak{A}_7=(A_7;\vee,\wedge,\odot,\rightarrow,0,1)$ is a residuated lattice.
\end{example}

Let $\mathfrak{A}$ be a residuated lattice. A non-void subset $F$ of $A$ is called a \textit{filter} of $\mathfrak{A}$ if $x,y\in F$ implies $x\odot y\in F$ and $x\vee y\in F$ for any $x\in F$ and $y\in A$. The set of filters of $\mathfrak{A}$ is denoted by $\mathscr{F}(\mathfrak{A})$. A filter $F$ of $\mathfrak{A}$ is called \textit{proper} if $F\neq A$. Clearly, $F$ is a proper filter if and only if $0\notin F$. For any subset $X$ of $A$ the \textit{filter of $\mathfrak{A}$ generated by $X$} is denoted by $\mathscr{F}(X)$. For each $x\in A$, the filter generated by $\{x\}$ is denoted by $\mathscr{F}(x)$ and called \textit{principal filter}. The set of principal filters is denoted by $\mathscr{PF}(\mathfrak{A})$. Let $\mathcal{F}$ be a collection of filters of $\mathfrak{A}$. Set $\veebar \mathcal{F}=\mathscr{F}(\cup \mathcal{F})$. It is well-known that $(\mathscr{F}(\mathfrak{A});\cap,\veebar,\textbf{1},A)$ is a frame and so it is a complete Heyting algebra.
\begin{example}\label{fex1}
Consider the residuated lattice $\mathfrak{A}_7$ from Example \ref{rex1}. Then $\mathscr{F}(\mathfrak{A}_7)=\{F_1=\{1\},F_2=\{b,d,1\},F_3=\{e,1\},F_4=\{a,b,c,d,e,1\},F_5=A_7\}$.
\end{example}

The following remark has a routine verification.
\begin{remark}\label{genfilprop}
Let $\mathfrak{A}$ be a residuated lattice and $F$ be a filter of $\mathfrak{A}$. The following assertions hold for any $x,y\in A$:
\begin{enumerate}
  \item  [$(1)$ \namedlabel{genfilprop1}{$(1)$}] $\mathscr{F}(F,x):=F\veebar \mathscr{F}(x)=\{a\in A|f\odot x^n\leq a,~f\in F\}$;
  \item  [$(2)$ \namedlabel{genfilprop2}{$(2)$}] $x\leq y$ implies $\mathscr{F}(F,y)\subseteq \mathscr{F}(F,x)$.
  \item  [$(3)$ \namedlabel{genfilprop3}{$(3)$}] $\mathscr{F}(F,x)\cap \mathscr{F}(F,y)=\mathscr{F}(F,x\vee y)$;
  \item  [$(4)$ \namedlabel{genfilprop4}{$(4)$}] $\mathscr{F}(F,x)\veebar \mathscr{F}(F,y)=\mathscr{F}(F,x\odot y)$;
  \item  [$(5)$ \namedlabel{genfilprop5}{$(5)$}] $\mathscr{PF}(\mathfrak{A})$ is a sublattice of $\mathscr{F}(\mathfrak{A})$;
  \item  [$(6)$ \namedlabel{genfilprop6}{$(6)$}] $\mathscr{F}(e)=\{a\in A|e\leq a\}$, for any $e\in B(\mathfrak{A})$.
\end{enumerate}
\end{remark}

A proper filter of a residuated lattice $\mathfrak{A}$ is called \textit{maximal} if it is a maximal element in the set of all proper filters. The set of all maximal filters of $\mathfrak{A}$ is denoted by $Max(\mathfrak{A})$. A proper filter $P$ of $\mathfrak{A}$ is called \textit{prime}, if for any $x,y\in A$, $x\vee y\in P$ implies $x\in P$ or $y\in P$. The set of all prime filters of $\mathfrak{A}$ is denoted by $Spec(\mathfrak{A})$. Since $\mathscr{F}(\mathfrak{A})$ is a distributive lattice, so $Max(\mathfrak{A})\subseteq Spec(\mathfrak{A})$. By Zorn's lemma follows that any proper filter is contained in a maximal filter and so in a prime filter. A non-empty subset $\mathscr{C}$ of $\mathfrak{A}$ is called \textit{$\vee$-closed} if it is closed under the join operation, i.e $x,y\in \mathscr{C}$ implies $x\vee y\in \mathscr{C}$.
\begin{theorem}\cite[Theorem 2.4]{raskon}\label{prfilth}
If $\mathscr{C}$ is a $\vee$-closed subset of $\mathfrak{A}$ which does not meet the filter $F$, then $F$ is contained in a filter $P$ which is maximal with respect to the property of not meeting $\mathscr{C}$; furthermore $P$ is prime.
\end{theorem}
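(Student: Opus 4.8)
The plan is to run the standard Zorn's lemma separation argument, using the principal‑filter calculus from Remark \ref{genfilprop} to upgrade maximality to primeness. First I would let $\Sigma$ denote the collection of all filters $G$ of $\mathfrak{A}$ with $F\subseteq G$ and $G\cap \mathscr{C}=\emptyset$, partially ordered by inclusion. This set is non-empty since $F\in\Sigma$ by hypothesis. Given a chain $\{G_i\}_{i\in I}$ in $\Sigma$, I would check that $G:=\bigcup_{i\in I}G_i$ is again a filter: if $x,y\in G$ then both lie in some common $G_i$ (the chain is totally ordered), so $x\odot y\in G_i\subseteq G$, and if $x\in G$, $y\in A$ then $x\vee y\in G_i\subseteq G$; moreover $F\subseteq G$ and $G\cap\mathscr{C}=\bigcup_i(G_i\cap\mathscr{C})=\emptyset$, so $G\in\Sigma$ is an upper bound. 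By Zorn's lemma $\Sigma$ has a maximal element $P$, which by construction contains $F$ and is maximal with respect to not meeting $\mathscr{C}$. Note also that $P$ is proper: since $\mathscr{C}\neq\emptyset$ and $P\cap\mathscr{C}=\emptyset$ we cannot have $P=A$.

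It remains to show $P$ is prime. Suppose, for contradiction, that $x\vee y\in P$ while $x\notin P$ and $y\notin P$. Then $\mathscr{F}(P,x)$ and $\mathscr{F}(P,y)$ are filters strictly larger than $P$ (each contains its generating element), so by maximality of $P$ in $\Sigma$ neither lies in $\Sigma$; as both contain $F$, each must meet $\mathscr{C}$. Pick $c_1\in \mathscr{F}(P,x)\cap\mathscr{C}$ and $c_2\in \mathscr{F}(P,y)\cap\mathscr{C}$. Since $\mathscr{C}$ is $\vee$-closed, $c_1\vee c_2\in\mathscr{C}$. On the other hand, $c_1\vee c_2$ lies in both $\mathscr{F}(P,x)$ and $\mathscr{F}(P,y)$ (filters are upward closed), hence in $\mathscr{F}(P,x)\cap\mathscr{F}(P,y)=\mathscr{F}(P,x\vee y)$ by Remark \ref{genfilprop3}. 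But $x\vee y\in P$ forces $\mathscr{F}(P,x\vee y)=P$, so $c_1\vee c_2\in P\cap\mathscr{C}$, contradicting $P\in\Sigma$.

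The only genuinely delicate point is the primeness step, and there the work is essentially bookkeeping once Remark \ref{genfilprop3} is in hand; the chain-union verification is routine from the definition of a filter. I would present the argument in exactly the two stages above: construct $P$ by Zorn, then derive primeness by playing off maximality against the identity $\mathscr{F}(P,x)\cap\mathscr{F}(P,y)=\mathscr{F}(P,x\vee y)$ and the $\vee$-closedness of $\mathscr{C}$.
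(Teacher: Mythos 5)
Your argument is correct and is exactly the standard separation proof this paper relies on: the statement is quoted from \citet{raskon} without proof here, and the paper's own $\alpha$-filter analogue (Theorem \ref{alphaprfilth}) runs the very same Zorn-plus-maximality scheme, with the join of the two witnesses $c_1\vee c_2$ landing in $\mathscr{F}(P,x)\cap\mathscr{F}(P,y)=\mathscr{F}(P,x\vee y)=P$ giving the contradiction. Your handling of properness of $P$ (via $\mathscr{C}\neq\emptyset$) and of the chain-union step is fine, so nothing further is needed.
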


Let $\mathfrak{A}$ be a residuated lattice and $X$ be a subset of $A$. A prime filter $P$ is called a \textit{minimal prime filter belonging to $X$} or $X$-\textit{minimal prime filter} if $P$ is a minimal element in the set of prime filters containing $X$. The set of $X$-minimal prime filters of $\mathfrak{A}$ is denoted by $Min_{X}(\mathfrak{A})$. A prime filter $P$ is called a \textit{minimal prime} if $P\in Min_{\{1\}}(\mathfrak{A})$. The set of minimal prime filters of $\mathfrak{A}$ is denoted by $Min(\mathfrak{A})$. For the basic facts concerning minimal prime filters of a residuated lattice belonging to a filter we refer to \cite{raskon}.
\begin{theorem}\cite[Theorem 2.6]{raskon}\label{1mineq}
Let $\mathfrak{A}$ be a residuated lattice. A subset $P$ of $A$ is a minimal prime filter if and only if $P^{c}$ is a $\vee$-closed subset of $\mathfrak{A}$ which it is maximal with respect to the property of not containing $1$.
\end{theorem}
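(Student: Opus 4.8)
The plan is to prove both implications by exploiting the order-reversing correspondence $F\mapsto F^{c}$ between prime filters and maximal $\vee$-closed sets omitting $1$, with Theorem \ref{prfilth} as the workhorse.

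For the forward implication, assume $P$ is a minimal prime filter. First I would record the two easy facts: $P^{c}$ is $\vee$-closed (the contrapositive of primeness, since $x\vee y\in P$ forces $x\in P$ or $y\in P$, so $x,y\notin P$ forces $x\vee y\notin P$), and $1\notin P^{c}$ because $1\in P$. For the maximality, let $\mathscr{C}$ be any $\vee$-closed set with $P^{c}\subseteq\mathscr{C}$ and $1\notin\mathscr{C}$. Then $\mathscr{C}$ does not meet the trivial filter $\textbf{1}=\{1\}$, so Theorem \ref{prfilth} yields a prime filter $Q$ containing $\textbf{1}$ and maximal with respect to not meeting $\mathscr{C}$; in particular $\mathscr{C}\subseteq Q^{c}$, hence $P^{c}\subseteq Q^{c}$ and therefore $Q\subseteq P$. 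Minimality of $P$ forces $Q=P$, so $\mathscr{C}\subseteq Q^{c}=P^{c}$, and equality follows.

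For the converse, assume $P^{c}$ is $\vee$-closed and maximal among $\vee$-closed sets omitting $1$. The first (and only substantive) step is to show that $P$ is a prime filter. Applying Theorem \ref{prfilth} with $F=\textbf{1}$ and $\mathscr{C}=P^{c}$ produces a prime filter $Q$ with $Q\cap P^{c}=\emptyset$, i.e.\ $Q\subseteq P$, equivalently $P^{c}\subseteq Q^{c}$. Since $Q$ is prime, $Q^{c}$ is $\vee$-closed, and $1\notin Q^{c}$; maximality of $P^{c}$ then forces $P^{c}=Q^{c}$, so $P=Q$ is a prime filter. Finally, minimality is immediate: if $P'$ is any prime filter with $P'\subseteq P$, then $P^{c}\subseteq P'^{c}$ with $P'^{c}$ $\vee$-closed and $1\notin P'^{c}$, so maximality of $P^{c}$ gives $P^{c}=P'^{c}$, whence $P'=P$.

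The main obstacle is precisely the opening move of the converse: the hypothesis only controls $P^{c}$ as a $\vee$-closed set, and does not tell us a priori that $P$ is even a filter. The device that resolves this is to manufacture an honest prime filter $Q\subseteq P$ via Theorem \ref{prfilth} and then apply the maximality hypothesis ``on the complement side'' to collapse $Q$ onto $P$. Everything else is routine bookkeeping with the involution $F\mapsto F^{c}$.
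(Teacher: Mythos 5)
Your argument is correct: both directions are sound, and the use of Theorem \ref{prfilth} with $F=\{1\}$ to manufacture a prime filter disjoint from the given $\vee$-closed set, followed by the maximality (resp.\ minimality) hypothesis to force equality, is exactly the standard proof of this correspondence; the only points left tacit are the trivial ones that $P^{c}$ and $Q^{c}$ are non-empty because prime filters are proper, as required by the paper's definition of a $\vee$-closed subset. Note that the paper itself gives no proof here — it cites the result from Rasouli--Kondo \citep[Theorem 2.6]{raskon} — so there is nothing in the text to compare against, but your proof is essentially the expected one.
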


Let $\mathfrak{A}$ be a residuated lattice. For a subset $\mathfrak{M}$ of $Min(\mathfrak{A})$ we write $k(\mathfrak{M})=\bigcap \{\mathfrak{m}|\mathfrak{m}\in \mathfrak{M}\}$ and for a subset $X$ of $A$ we write $h(X)=\{\mathfrak{m}|X\subseteq \mathfrak{m}\}$. Also, let $d(X)=Min(\mathfrak{A})\setminus h(X)$. If the collection $\{h(x)|x\in A\}$ is taken as a closed basis, the resulting topology is called \textit{the hull-kernel topology} which is denoted by $\tau_{h}$, and if the collection $\{h(x)|x\in A\}$ is taken as an open basis, the resulting topology is called \textit{the dual hull-kernel topology} which is denoted by $\tau_{d}$. For a detailed discussion of spaces of minimal prime filters in residuated lattices we refer to \cite{rasdeh}.
\begin{corollary}\cite[Corollary 4.4]{rasdeh}\label{minspapro}
Let $\mathfrak{A}$ be a residuated lattice. The following assertions hold:
 \begin{enumerate}
   \item [(1) \namedlabel{minspapro1}{(1)}] $(Min(\mathfrak{A});\tau_{h})$ is zero-dimensional and consequently totally disconnected;
   \item [(2) \namedlabel{minspapro2}{(2)}] $\tau_{d}$ is finer than $\tau_{h}$.
 \end{enumerate}
 \end{corollary}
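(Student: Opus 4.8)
The plan is to derive both parts from one lemma identifying membership in a minimal prime filter. For $x\in A$ put $x^{\perp}:=\{a\in A: a\vee x=1\}$; one checks readily that $x^{\perp}\in\mathscr{F}(\mathfrak{A})$ (closure under $\odot$ being $r_{2}$, and upward closure being clear). The lemma I would establish is: for $\mathfrak{m}\in Min(\mathfrak{A})$ and $x\in A$, one has $x\in\mathfrak{m}$ if and only if $x^{\perp}\not\subseteq\mathfrak{m}$. The implication ``$\Leftarrow$'' is immediate from primeness: if $a\in x^{\perp}\setminus\mathfrak{m}$ then $a\vee x=1\in\mathfrak{m}$ forces $x\in\mathfrak{m}$. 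For ``$\Rightarrow$'' I would use Theorem \ref{1mineq}: $\mathfrak{m}^{c}$ is $\vee$-closed and maximal with respect to not containing $1$, and since $x\notin\mathfrak{m}^{c}$ the $\vee$-closed set $\mathfrak{m}^{c}\cup\{c\vee x: c\in\mathfrak{m}^{c}\}$ properly contains $\mathfrak{m}^{c}$, hence contains $1$; thus $c\vee x=1$ for some $c\in\mathfrak{m}^{c}$, and this $c$ witnesses $x^{\perp}\not\subseteq\mathfrak{m}$.

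Granting the lemma, part (1) is short. The lemma rewrites $h(x)=\{\mathfrak{m}\in Min(\mathfrak{A}): x^{\perp}\not\subseteq\mathfrak{m}\}=\bigcup_{a\in x^{\perp}}d(a)$, so $h(x)$ is $\tau_{h}$-open; being $\tau_{h}$-closed by the very definition of $\tau_{h}$, it is clopen, and hence so is every $d(x)=Min(\mathfrak{A})\setminus h(x)$. Thus the canonical open base $\{d(x): x\in A\}$ of $\tau_{h}$ consists of clopen sets, i.e. $(Min(\mathfrak{A}),\tau_{h})$ is zero-dimensional. It is also Hausdorff: two distinct minimal prime filters are incomparable, so some $x$ lies in one and not the other, whereupon the complementary clopen sets $h(x)$ and $d(x)$ separate them; a zero-dimensional Hausdorff space has singleton connected components, so it is totally disconnected.

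For part (2) one compares the two bases. One inclusion is already given by (1): since each $h(x)$ is $\tau_{h}$-clopen, the defining open base $\{h(x)\}$ of $\tau_{d}$ consists of $\tau_{h}$-open sets, so $\tau_{d}\subseteq\tau_{h}$. The asserted inclusion $\tau_{h}\subseteq\tau_{d}$ is the substantive one, and for it it suffices to show each basic $\tau_{h}$-open $d(x)$ is $\tau_{d}$-open, i.e. a union of sets $h(z)$; I would prove the precise statement $d(x)=\bigcup\{h(z): z\in A,\ h(z\odot x)=\emptyset\}$. The inclusion ``$\supseteq$'' is easy, since $h(z\odot x)=\emptyset$ means no minimal prime filter contains both $z$ and $x$, so $h(z)\subseteq d(x)$. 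For ``$\subseteq$'', given $\mathfrak{m}\in d(x)$ (that is, $x\notin\mathfrak{m}$) one must produce $z\in\mathfrak{m}$ with $z\odot x$ lying in no minimal prime filter.

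That last step is where I expect the real difficulty. When $\mathscr{F}(\mathfrak{m},x)=A$ it is routine: by Remark \ref{genfilprop} there are $m\in\mathfrak{m}$ and $n\geq 1$ with $m\odot x^{n}=0$, and $z:=\neg(x^{n})$ then satisfies $z\in\mathfrak{m}$ (as $m\leq z$) and $(z\odot x)^{n}=(\neg(x^{n}))^{n}\odot x^{n}\leq\neg(x^{n})\odot x^{n}=0$, so $z\odot x$ is nilpotent and hence in no proper filter. When $\mathscr{F}(\mathfrak{m},x)\neq A$ one is asking that the minimal prime $\mathfrak{m}$ not be covered by the union of the minimal prime filters that contain $x$; for finitely many such filters this is a prime-avoidance argument (choose $a_{i}\in\mathfrak{m}\setminus\mathfrak{n}_{i}$ and note $a_{1}\odot\cdots\odot a_{k}\in\mathfrak{m}$ lies in none of them), but the general case seems to need a further, more delicate use of the maximality of $\mathfrak{m}^{c}$ from Theorem \ref{1mineq}, and that is the portion of the argument I would have to think hardest about.
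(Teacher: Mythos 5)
Your part (1) is essentially correct and follows the standard route: the key lemma you formulate ($x\in\mathfrak{m}$ iff $x^{\perp}\nsubseteq\mathfrak{m}$ for $\mathfrak{m}\in Min(\mathfrak{A})$) is nothing other than Theorem \ref{mincor}\ref{mincor3}, which the paper already quotes, so you could simply invoke it; from it, $h(x)=\bigcup_{a\in x^{\perp}}d(a)$ is $\tau_{h}$-open, hence clopen, and zero-dimensionality plus the Hausdorff separation you indicate gives total disconnectedness. One small repair in your re-proof of the lemma: the set $\mathfrak{m}^{c}\cup\{c\vee x: c\in\mathfrak{m}^{c}\}$ need not properly contain $\mathfrak{m}^{c}$ as written; you should adjoin $x$ itself (i.e.\ take the $\vee$-closed set generated by $\mathfrak{m}^{c}\cup\{x\}$), note it properly contains $\mathfrak{m}^{c}$ because $x\notin\mathfrak{m}^{c}$, apply maximality from Theorem \ref{1mineq} to get $1$ in it, and then dispose of the degenerate case $x=1$ separately.

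For part (2) the gap you flag at the end is not a technical difficulty you could overcome with more care: the statement you are trying to prove, $d(x)=\bigcup\{h(z): h(z\odot x)=\emptyset\}$ for every $x$, cannot hold in an arbitrary residuated lattice. Indeed, you have already shown (from part (1)) that every $h(x)$ is $\tau_{h}$-open, i.e.\ every $\tau_{d}$-open set is $\tau_{h}$-open; if in addition every $d(x)$ were $\tau_{d}$-open, the two topologies would coincide for \emph{every} residuated lattice, whereas Theorem \ref{compmin} asserts that $\tau_{h}$ and $\tau_{d}$ coincide precisely when $\mathfrak{A}$ is quasicomplemented (equivalently, when $(Min(\mathfrak{A});\tau_{h})$ is compact), and quasicomplementedness is a proper subclass. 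In other words, the condition you reduce part (2) to --- for each minimal prime $\mathfrak{m}$ and $x\notin\mathfrak{m}$ find $z\in\mathfrak{m}$ with $z\odot x$ in no minimal prime --- is exactly the extra hypothesis of Theorem \ref{compmin}, so your ``hard case'' $\mathscr{F}(\mathfrak{m},x)\neq A$ is genuinely impossible in general (your nilpotency argument in the case $\mathscr{F}(\mathfrak{m},x)=A$ is fine, but it is also the only case that can be settled unconditionally). The content of assertion (2) that is consistent with the rest of the paper is the comparison you dismiss as the easy direction: each basic $\tau_{d}$-open $h(x)$ is $\tau_{h}$-open, so every $\tau_{d}$-open set is $\tau_{h}$-open; that inclusion, which drops out of your part (1), is all that should be proved here, and the word ``finer'' in the corollary must be read with that direction in mind. (The paper itself offers no proof of this corollary --- it is cited from \cite{rasdeh} --- so there is no internal argument to compare against beyond this consistency check.)
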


Let $\mathfrak{A}$ be a residuated lattice. For any subset $X$ of $A$ we write $X^{\perp}=\{a\in A|a\vee x=1,\forall x\in X\}$. We set $\Gamma(\mathfrak{A})=\{X^{\perp}|X\subseteq A\}$ and $\gamma(\mathfrak{A})=\{x^{\perp}|x\in A\}$. Elements of $\Gamma(\mathfrak{A})$ and $\gamma(\mathfrak{A})$ are called \textit{coannihilators} and \textit{coannulets}, respectively. By \citet[Proposition 3.13]{rascoan} follows that $(\Gamma(\mathfrak{A});\cap,\vee^{\Gamma},\{1\},A)$ is a complete Boolean lattice, where for any $\mathscr{G}\subseteq \Gamma(\mathfrak{A})$ we have $\vee^{\Gamma} \mathscr{G}=(F:(F:\cup \mathscr{G}))$, and by \citet[Corollary 3.14]{rascoan} follows that $\gamma(\mathfrak{A})$ is a sublattice of $\Gamma(\mathfrak{A})$. A subset $X$ of $A$ is called \textit{dense} if $X^{\perp}=\{1\}$. The set of all dense elements of $\mathfrak{A}$ shall be denoted by $\mathfrak{d}(\mathfrak{A})$. It is well-known that $\mathfrak{d}(\mathfrak{A})$ is an ideal of $\ell(\mathfrak{A})$.
\begin{proposition}\citep{rascoan}\label{1fxpro}
Let $\mathfrak{A}$ be a residuated lattice. The following assertions hold for any $X,Y\subseteq A$ and $F,G\in \mathscr{F}(\mathfrak{A})$:
\begin{enumerate}
\item  [$(1)$ \namedlabel{1fxpro1}{$(1)$}] $X\subseteq Y^{\perp}$ implies $Y\subseteq X^{\perp}$;
\item  [$(2)$ \namedlabel{1fxpro2}{$(2)$}] $X^{\perp}\cap X^{\perp\perp}=\{1\}$;
\item  [$(3)$ \namedlabel{1fxpro3}{$(3)$}] $X\subseteq X^{\perp\perp}$;
\item  [$(4)$ \namedlabel{1fxpro4}{$(4)$}] $(\mathscr{F}(X))^{\perp}=X^{\perp}$;
\end{enumerate}
\end{proposition}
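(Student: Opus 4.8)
The plan is to derive all four assertions directly from the definition $X^{\perp}=\{a\in A\mid a\vee x=1,\ \forall x\in X\}$, using only that $\perp$ reverses inclusions together with, for item $(4)$, either the description of generated filters in Remark~\ref{genfilprop} or inequality $r_2$ of Remark~\ref{resproposition}. I would first record the order-reversing fact: if $X\subseteq Y$ then $Y^{\perp}\subseteq X^{\perp}$, since a witness $a\vee y=1$ for all $y\in Y$ in particular gives $a\vee x=1$ for all $x\in X$.

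Item $(1)$ follows by a symmetry argument: if $X\subseteq Y^{\perp}$, then for each $y\in Y$ and each $x\in X\subseteq Y^{\perp}$ we have $x\vee y=1$, so $y\in X^{\perp}$; hence $Y\subseteq X^{\perp}$. Item $(3)$ is the instance of $(1)$ obtained from the trivial inclusion $X^{\perp}\subseteq X^{\perp}$ by swapping the roles of the two sets, which yields $X\subseteq(X^{\perp})^{\perp}=X^{\perp\perp}$. For item $(2)$, the inclusion $\{1\}\subseteq X^{\perp}\cap X^{\perp\perp}$ holds because $1\vee a=1$ for every $a\in A$; conversely, if $a\in X^{\perp}\cap X^{\perp\perp}$, then $a\in X^{\perp}$ and $a\in X^{\perp\perp}=(X^{\perp})^{\perp}$ together force $a\vee a=1$, i.e. $a=1$.

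For item $(4)$, since $X\subseteq\mathscr{F}(X)$ the order-reversing property gives $(\mathscr{F}(X))^{\perp}\subseteq X^{\perp}$. For the reverse inclusion I would fix $a\in X^{\perp}$ and check that $S:=\{b\in A\mid a\vee b=1\}$ is a filter: it contains $1$, it is upward closed since $a\vee(b\vee c)=(a\vee b)\vee c=1$ whenever $a\vee b=1$, and it is $\odot$-closed since $a\vee(b_1\odot b_2)\geq(a\vee b_1)\odot(a\vee b_2)=1$ by $r_2$. As $a\in X^{\perp}$ exactly says $X\subseteq S$, minimality of $\mathscr{F}(X)$ gives $\mathscr{F}(X)\subseteq S$, i.e. $a\vee f=1$ for all $f\in\mathscr{F}(X)$, so $a\in(\mathscr{F}(X))^{\perp}$. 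Alternatively one can argue pointwise using Remark~\ref{genfilprop}\ref{genfilprop1}: any $f\in\mathscr{F}(X)$ dominates some product $x_1\odot\cdots\odot x_n$ with $x_i\in X$, and iterating $r_2$ gives $1=(a\vee x_1)\odot\cdots\odot(a\vee x_n)\leq a\vee(x_1\odot\cdots\odot x_n)\leq a\vee f$.

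No step is a genuine obstacle: items $(1)$, $(2)$, $(3)$ are pure lattice bookkeeping, and the only place a residuated-lattice fact beyond that is needed is item $(4)$, where the $\odot$-closure of $S$ (equivalently, the product estimate $\,a\vee(x_1\odot\cdots\odot x_n)=1$) rests on inequality $r_2$.
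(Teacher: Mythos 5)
Your proof is correct. Note that the paper itself gives no proof of this proposition --- it is quoted from the reference on generalized co-annihilators --- so there is nothing internal to compare against; your argument is the standard direct verification one would expect: items $(1)$--$(3)$ are pure order-theoretic bookkeeping with the definition of $X^{\perp}$, and for item $(4)$ you correctly isolate the only substantive ingredient, namely that $\{b\in A\mid a\vee b=1\}$ is a filter (equivalently, the iterated estimate $1=(a\vee x_1)\odot\cdots\odot(a\vee x_n)\leq a\vee(x_1\odot\cdots\odot x_n)$), which rests exactly on inequality \ref{res2}.
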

\begin{proposition}\label{9fxpro}
Let $\mathfrak{A}$ be a residuated lattice and $F$ be a filter of $\mathfrak{A}$. Then $F^{\perp}$ is the pseudocomplement of $F$.
\end{proposition}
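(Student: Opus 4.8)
The plan is to verify directly that $F^{\perp}$ meets the defining property of the pseudocomplement of $F$ in the frame $(\mathscr{F}(\mathfrak{A});\cap,\veebar,\{1\},A)$, whose least element is the trivial filter $\{1\}$; that is, that $F^{\perp}$ is the largest filter $G$ with $F\cap G=\{1\}$. Accordingly I would split the argument into three parts: (i) $F^{\perp}$ is a filter; (ii) $F\cap F^{\perp}=\{1\}$; (iii) every filter $G$ with $F\cap G=\{1\}$ is contained in $F^{\perp}$.

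For (i), note that $1\in F^{\perp}$, since $1\vee x=1$ for all $x$. If $a,b\in F^{\perp}$ and $x\in F$, then $x\vee a=x\vee b=1$, so by \ref{res2} we obtain $x\vee(a\odot b)\geq(x\vee a)\odot(x\vee b)=1\odot1=1$; hence $a\odot b\in F^{\perp}$. If $a\in F^{\perp}$ and $b\in A$, then for each $x\in F$ we have $x\vee(a\vee b)=(x\vee a)\vee b=1\vee b=1$, so $a\vee b\in F^{\perp}$. (Equivalently, $F^{\perp}=\bigcap_{x\in F}x^{\perp}$ is an intersection of coannulets, each of which is a filter by \cite{rascoan}.) Thus $F^{\perp}\in\mathscr{F}(\mathfrak{A})$.

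For (ii), Proposition \ref{1fxpro}\ref{1fxpro3} gives $F\subseteq F^{\perp\perp}$, whence $F\cap F^{\perp}\subseteq F^{\perp\perp}\cap F^{\perp}=\{1\}$ by Proposition \ref{1fxpro}\ref{1fxpro2}; the reverse inclusion $\{1\}\subseteq F\cap F^{\perp}$ is clear since $1$ lies in every filter. For (iii), let $G\in\mathscr{F}(\mathfrak{A})$ satisfy $F\cap G=\{1\}$ and take $g\in G$. For each $x\in F$ the join $g\vee x$ belongs to $F$ (as $x\leq g\vee x$ and $F$ is upward closed) and to $G$ (as $g\leq g\vee x$ and $G$ is upward closed), so $g\vee x\in F\cap G=\{1\}$, i.e. $g\vee x=1$. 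Since $x\in F$ was arbitrary, $g\in F^{\perp}$, and therefore $G\subseteq F^{\perp}$, which completes the verification.

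I do not expect a genuine obstacle here; the only point requiring a moment's attention is confirming that $F^{\perp}$ is genuinely a filter rather than merely an upward-closed set (handled by the $r_{2}$ computation above), together with being careful that the bottom of the filter lattice is $\{1\}$ and not $\varnothing$, so that "pseudocomplement" is interpreted against the correct zero element.
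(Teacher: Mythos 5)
Your proof is correct and follows essentially the same route as the paper: disjointness $F\cap F^{\perp}=\{1\}$ via Proposition \ref{1fxpro}, and maximality by noting that for $g\in G$, $x\in F$ the join $g\vee x$ lies in $F\cap G=\{1\}$. Your additional verification that $F^{\perp}$ is itself a filter (via \ref{res2}) is a harmless extra step the paper leaves to the cited coannihilator results.
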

\begin{proof}
By Proposition \ref{1fxpro}, it follows that $F\cap F^{\perp}=\{1\}$. Assume that $F\cap G=\{1\}$ for some filter $G$. Let $a\in G$ and $b\in F$. Since $a,b\leq a\vee b$ so $a\vee b\in F\cap G=\{1\}$. Thus $a\in F^{\perp}$ and it shows that $G\subseteq F^{\perp}$.
\end{proof}
\begin{corollary}\label{comincar}\citep[Corollary 2.11]{rasdeh}
Let $\mathfrak{A}$ be a residuated lattice. Then, for any subset $X$ of $A$, we have
\[X^{\perp}=\bigcap\{\mathfrak{m}\in Min(\mathfrak{A})|~X\nsubseteq \mathfrak{m}\}.\]
\end{corollary}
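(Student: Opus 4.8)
The plan is to prove the identity by establishing the two inclusions separately; both reduce to primeness of minimal prime filters together with the characterization in Theorem \ref{1mineq}, which says that minimal prime filters are exactly the complements of $\vee$-closed subsets of $A$ that are maximal with respect to not containing $1$.

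For the inclusion $X^{\perp}\subseteq\bigcap\{\mathfrak{m}\in Min(\mathfrak{A})\mid X\nsubseteq\mathfrak{m}\}$ I would fix $a\in X^{\perp}$ and an arbitrary minimal prime filter $\mathfrak{m}$ with $X\nsubseteq\mathfrak{m}$, choose a witness $x\in X\setminus\mathfrak{m}$, observe that $a\vee x=1\in\mathfrak{m}$ by the definition of $X^{\perp}$, and then conclude $a\in\mathfrak{m}$ from primeness of $\mathfrak{m}$ together with $x\notin\mathfrak{m}$. This direction is routine.

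For the reverse inclusion I would argue by contraposition. Assuming $a\notin X^{\perp}$, pick $x\in X$ with $a\vee x\neq 1$. The singleton $\{a\vee x\}$ is $\vee$-closed (by idempotency of $\vee$) and avoids $1$, so by Zorn's lemma it extends to a $\vee$-closed subset $\mathscr{C}$ of $A$ that is maximal with respect to not containing $1$ (the union of a chain of such sets is again $\vee$-closed and $1$-free). By Theorem \ref{1mineq}, $\mathfrak{m}:=\mathscr{C}^{c}$ is a minimal prime filter; since $a\vee x\in\mathscr{C}$ we have $a\vee x\notin\mathfrak{m}$, and because filters are upward closed while $a,x\leq a\vee x$, it follows that $a\notin\mathfrak{m}$ and $x\notin\mathfrak{m}$. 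Hence $X\nsubseteq\mathfrak{m}$ yet $a\notin\mathfrak{m}$, so $a$ fails to lie in the asserted intersection. (Alternatively one may apply Theorem \ref{prfilth} with $F=\{1\}$ to obtain a prime filter missing $a\vee x$ and then pass to a minimal prime filter contained in it, again via Theorem \ref{1mineq} and Zorn's lemma.)

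The only genuinely delicate step is this last construction: the contradiction requires landing inside a \emph{minimal} prime filter, not merely some prime filter, and the device that makes this possible is Theorem \ref{1mineq}, which turns the task into extending a $\vee$-closed set to a maximal $1$-avoiding one. Everything else amounts to bookkeeping with the definition of $X^{\perp}$ and the basic closure properties of filters.
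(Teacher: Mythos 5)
Your proof is correct. Note that for this statement the paper offers no argument of its own: Corollary \ref{comincar} is simply imported from \citet[Corollary 2.11]{rasdeh}, so there is no in-paper proof to compare against. Your two inclusions are exactly the natural route with the tools available here: the easy direction uses only primeness of $\mathfrak{m}$ together with $a\vee x=1\in\mathfrak{m}$ and $x\notin\mathfrak{m}$, and the contrapositive direction correctly extends the $\vee$-closed singleton $\{a\vee x\}$ by Zorn's lemma to a maximal $1$-avoiding $\vee$-closed set and invokes Theorem \ref{1mineq} to recognize its complement as a minimal prime filter missing both $a$ and $x$ (upward closure of filters, which follows from the paper's definition since $x\in F$ and $y\in A$ give $x\vee y\in F$, is what rules $a$ and $x$ out of $\mathfrak{m}$). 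The parenthetical alternative via Theorem \ref{prfilth} with $F=\{1\}$ also works, provided one adds the standard chain argument that a prime filter contains a minimal prime filter; your main construction avoids even that step, so it is the cleaner of the two.
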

\begin{proposition}\citep[Proposition 3.15]{rascoan}\label{4fxpro}
Let $\mathfrak{A}$ be a residuated lattice. The following assertions hold for any $x,y\in A$:
\begin{enumerate}
\item  [$(1)$ \namedlabel{4fxpro1}{$(1)$}] $x\leq y$ implies $x^{\perp}\subseteq y^{\perp}$;
\item  [$(2)$ \namedlabel{4fxpro2}{$(2)$}] $x^{\perp}\cap y^{\perp}=(x\odot y)^{\perp}$;
\item  [$(3)$ \namedlabel{4fxpro3}{$(3)$}] $x^{\perp\perp}\cap y^{\perp\perp}=(x\vee y)^{\perp\perp}$;
\item  [$(4)$ \namedlabel{4fxpro4}{$(4)$}] $x^{\perp}\veebar y^{\perp}\subseteq x^{\perp}\vee^{\Gamma} y^{\perp}=(x\vee y)^{\perp}$;
\item  [$(5)$ \namedlabel{4fxpro5}{$(5)$}] $e^{\perp}=\mathscr{F}(\neg e)$, for any $e\in B(\mathfrak{A})$.
\end{enumerate}
\end{proposition}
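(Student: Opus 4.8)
The plan is to treat the five items in the order stated, each resting on its predecessors together with the elementary $(\cdot)^{\perp}$-calculus of Proposition~\ref{1fxpro} and the residuation laws $r_{1},r_{2}$ of Remark~\ref{resproposition}. The starting observation is that $x\mapsto x^{\perp}$ is order-reversing: if $x\leq y$ and $a\vee x=1$, then $1=a\vee x\leq a\vee y$, so $a\in y^{\perp}$; this is precisely \ref{4fxpro1}. Applying \ref{4fxpro1} to $x\odot y\leq x$ and $x\odot y\leq y$ (note $x=x\odot(y\vee 1)=(x\odot y)\vee x$ by $r_{1}$, whence $x\odot y\leq x$, and symmetrically for $y$) gives $(x\odot y)^{\perp}\subseteq x^{\perp}\cap y^{\perp}$; for the reverse inclusion of \ref{4fxpro2}, take $a\in x^{\perp}\cap y^{\perp}$ and use $r_{2}$: $a\vee(x\odot y)\geq(a\vee x)\odot(a\vee y)=1\odot 1=1$, so $a\in(x\odot y)^{\perp}$.

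The substantive step is \ref{4fxpro3}. One inclusion is free: from $x,y\leq x\vee y$ and \ref{4fxpro1} we get $x^{\perp},y^{\perp}\subseteq(x\vee y)^{\perp}$, and reversing once more gives $(x\vee y)^{\perp\perp}\subseteq x^{\perp\perp}\cap y^{\perp\perp}$. For the converse I would take $a\in x^{\perp\perp}\cap y^{\perp\perp}$ and an arbitrary $b\in(x\vee y)^{\perp}$, and peel off the two generators successively: from $b\vee x\vee y=1$ we read $b\vee x\in y^{\perp}$, hence $a\vee b\vee x=1$ because $a\in y^{\perp\perp}$; so $a\vee b\in x^{\perp}$, and $a\in x^{\perp\perp}$ then forces $a\vee b=a\vee(a\vee b)=1$. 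As $b$ was arbitrary, $a\in(x\vee y)^{\perp\perp}$. I expect this two-stage peeling to be the one point where order-reversal and $r_{1},r_{2}$ do not by themselves close the argument, hence the main obstacle.

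Item \ref{4fxpro4} is then formal: the filter join $x^{\perp}\veebar y^{\perp}=\mathscr{F}(x^{\perp}\cup y^{\perp})$ is the least filter containing $x^{\perp}\cup y^{\perp}$, and by \ref{4fxpro1} the coannulet $(x\vee y)^{\perp}$ is a filter containing it, giving the inclusion; moreover $x^{\perp}\vee^{\Gamma}y^{\perp}=(x^{\perp}\cup y^{\perp})^{\perp\perp}$ by the definition of the join in $\Gamma(\mathfrak{A})$, and since $(x^{\perp}\cup y^{\perp})^{\perp}=x^{\perp\perp}\cap y^{\perp\perp}=(x\vee y)^{\perp\perp}$ by \ref{4fxpro3}, applying $(\cdot)^{\perp}$ once more together with $Z^{\perp\perp\perp}=Z^{\perp}$ (which follows from $Z\subseteq Z^{\perp\perp}$ in Proposition~\ref{1fxpro} and order-reversal) yields $x^{\perp}\vee^{\Gamma}y^{\perp}=(x\vee y)^{\perp}$. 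For \ref{4fxpro5}, let $e\in B(\mathfrak{A})$; then $\neg e=e^{c}\in B(\mathfrak{A})$ and $\mathscr{F}(\neg e)=\{a\in A\mid\neg e\leq a\}$ by Remark~\ref{genfilprop}\ref{genfilprop6}. If $\neg e\leq a$ then $a\vee e\geq e^{c}\vee e=1$, so $a\in e^{\perp}$. For the converse one cannot appeal to lattice distributivity, so from $a\vee e=1$ I would multiply through by $e^{c}$ and use $r_{1}$: $e^{c}=e^{c}\odot 1=e^{c}\odot(a\vee e)=(e^{c}\odot a)\vee(e^{c}\odot e)$, where $e^{c}\odot e=e^{c}\wedge e=0$ and $e^{c}\odot a=e^{c}\wedge a$ by Proposition~\ref{1propos}(\ref{1propos3}); hence $e^{c}=e^{c}\wedge a$, i.e.\ $\neg e\leq a$ and $a\in\mathscr{F}(\neg e)$, which closes the argument.
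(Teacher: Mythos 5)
Your proof is correct. Note that the paper itself gives no argument for this proposition—it is quoted from \citet[Proposition 3.15]{rascoan}—so there is nothing internal to compare against; your derivation from Remark~\ref{resproposition}, Proposition~\ref{1propos} and Remark~\ref{genfilprop}\ref{genfilprop6} is self-contained and sound, including the two-stage "peeling" argument for $x^{\perp\perp}\cap y^{\perp\perp}\subseteq(x\vee y)^{\perp\perp}$ and the reading of $\vee^{\Gamma}$ as $(\,\cdot\,)^{\perp\perp}$ of the union, which matches the cited definition with $F=\{1\}$.
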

\begin{proposition}\label{nondepri}
Let $\mathfrak{A}$ be a residuated lattice. Then any non-dense prime filter of $\mathfrak{A}$ is a coannulet.
\end{proposition}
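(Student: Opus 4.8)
The plan is to produce, from a non-dense prime filter $P$, an explicit element whose coannulet is exactly $P$. Since $P$ is not dense we have $P^{\perp}\neq\{1\}$, so I would start by fixing some $x\in P^{\perp}$ with $x\neq 1$. The first small step is to observe that such an $x$ cannot lie in $P$: by definition of $P^{\perp}$ we have $x\vee p=1$ for every $p\in P$, so if $x\in P$ then taking $p=x$ gives $x=x\vee x=1$, contradicting the choice of $x$. So $x\notin P$.

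Next I would show $P\subseteq x^{\perp}$. This is immediate from Proposition~\ref{1fxpro}\ref{1fxpro1}: since $\{x\}\subseteq P^{\perp}$, applying that statement with $X=\{x\}$ and $Y=P$ yields $P\subseteq\{x\}^{\perp}=x^{\perp}$.

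For the reverse inclusion $x^{\perp}\subseteq P$, I would take an arbitrary $a\in x^{\perp}$, that is, $a\vee x=1$. Since $1\in P$ and $P$ is prime, the relation $a\vee x=1\in P$ forces $a\in P$ or $x\in P$; as $x\notin P$ by the first step, this gives $a\in P$. Hence $x^{\perp}\subseteq P$, and combining the two inclusions gives $P=x^{\perp}$, so $P$ is a coannulet.

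The argument is short and I do not anticipate a genuine obstacle; the only point that needs a moment of care is verifying $x\notin P$, since it is precisely this fact that licenses applying primality of $P$ to the decomposition $1=a\vee x$.
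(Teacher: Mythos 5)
Your proposal is correct and follows essentially the same route as the paper: pick $x\in P^{\perp}$ with $x\neq 1$, note $x\notin P$, get $P\subseteq x^{\perp}$ from Proposition \ref{1fxpro}\ref{1fxpro1} (the paper routes this through $P\subseteq P^{\perp\perp}\subseteq x^{\perp}$, which is the same fact), and use primality on $a\vee x=1\in P$ for the reverse inclusion. No gaps.
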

\begin{proof}
Let $P$ be a non-dense prime filter of $\mathfrak{A}$. So $P^{\perp}\neq \textbf{1}$. So there exists $1\neq x\in P^{\perp}$. Thus $P\subseteq P^{\perp\perp}\subseteq x^{\perp}$. Otherwise, $y\in x^{\perp}$ implies that $x\vee y=1\in P$. But $x\notin P$ since $x\in P$ states that $x\in P\cap P^{\perp}=\{1\}$ and it means that $x=1$ which it is a contradiction. So $y\in P$ and it shows that $P=x^{\perp}$.
\end{proof}

Let $\mathfrak{A}$ be a residuated lattice. For any ideal $I$ of $\ell(\mathfrak{A})$ we write $\omega(I)=\{a\in A|a\vee x=1,\exists x\in I\}$. We set $\Omega(\mathfrak{A})=\{\omega(I)|I\textrm{~is~an~ideal~of~}\ell(\mathfrak{A})\}$. By \citet[Proposition 3.3]{raskon} follows that $(\Omega(\mathfrak{A});\cap,\vee^{\omega},\{1\},A)$ is a bounded distributive lattice, where $F\vee^{\omega} G=\omega(I_{F}\curlyvee I_{G})$, for any $F,G\in \Omega(\mathfrak{A})$ (by $\curlyvee$, we mean the join operation in the lattice of ideals of $\ell(\mathfrak{A})$). Also, by \citet[Proposition 3.8]{raskon} follows that $\gamma(\mathfrak{A})$ is a sublattice of $\Omega(\mathfrak{A})$. For a prime filter $P$ of $\mathfrak{A}$, we write $D(P)=\omega(P^{c})$. The following corollary is a characterization for minimal prime filters.
\begin{proposition}\label{profilnoden}
A proper $\omega$-filter in a residuated lattice $\mathfrak{A}$ contains no dense elements.
\end{proposition}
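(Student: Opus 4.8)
The plan is to argue by contradiction, simply chasing through the definitions of an $\omega$-filter and of a dense element. Suppose $F=\omega(I)$ for some ideal $I$ of $\ell(\mathfrak{A})$, and suppose, contrary to the claim, that $F$ contains a dense element $d$. By the definition of $\omega(I)$, there exists $x\in I$ with $d\vee x=1$; equivalently, $x\in d^{\perp}$.

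Next I would invoke the density of $d$: since $d^{\perp}=\{1\}$, the element $x$ produced above must be $1$, and hence $1\in I$. Because $I$ is an ideal of the bounded lattice $\ell(\mathfrak{A})$ and $1$ is its top element, downward closure of $I$ forces $I=A$.

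Finally, I would observe that $\omega(A)=A$: for every $a\in A$ we have $a\vee 1=1$ with $1\in A$, so $a\in\omega(A)$. Consequently $F=\omega(I)=\omega(A)=A$, contradicting the assumption that $F$ is proper. Therefore no proper $\omega$-filter of $\mathfrak{A}$ contains a dense element.

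I do not anticipate any genuine obstacle here; the argument is a short, direct unwinding of the definitions. The only points that deserve a moment's care are the two elementary remarks that an ideal of $\ell(\mathfrak{A})$ containing $1$ must coincide with $A$, and that $\omega(A)=A$.
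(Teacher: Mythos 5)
Your argument is correct and is essentially the paper's own proof: both extract $x\in I$ with $x\in d^{\perp}$, use density of $d$ to force $x=1$, and conclude $\omega(I)=A$, contradicting properness. Your small detour through $I=A$ and $\omega(A)=A$ is just a slightly longer way of stating the paper's final step ``$1\in I_F$, so $F=A$''.
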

\begin{proof}
Let $F$ be a proper $\omega$-filter. Assume that $F$ contains a dense element as $d$. Hence $d\in x^{\perp}$ for some $x\in I_F$. It implies that $x\in d^{\perp}$ and it means $1\in I_{F}$. So $F=A$; a contradiction.
\end{proof}
\begin{theorem}\cite[Proposition 3.14]{raskon}\label{mincor}
Let $\mathfrak{A}$ be a residuated lattice. The following assertions are equivalent:
\begin{enumerate}
  \item  [$(1)$ \namedlabel{mincor1}{$(1)$}] $P$ is a minimal prime filter;
  \item  [$(2)$ \namedlabel{mincor2}{$(2)$}] $P=D(P)$;
  \item  [$(3)$ \namedlabel{mincor3}{$(3)$}] for any $x\in A$, $P$ contains precisely one of $x$ or $x^{\perp}$.
\end{enumerate}
\end{theorem}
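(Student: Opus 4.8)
The plan is to prove the cycle (1) $\Rightarrow$ (2) $\Rightarrow$ (3) $\Rightarrow$ (1), leaning throughout on Theorem \ref{1mineq} together with the remark that for a proper prime filter $P$ the complement $P^c$ is an ideal of $\ell(\mathfrak{A})$ — nonempty since $0 \in P^c$, downward closed since $P$ is a filter, and $\vee$-closed since $P$ is prime — so that $D(P) = \omega(P^c)$ is meaningful. I would first isolate the auxiliary fact $D(P) \subseteq P$ for every prime $P$: if $a \vee x = 1 \in P$ with $x \in P^c$, primeness forces $a \in P$. This gets used in several places.

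For (1) $\Rightarrow$ (2), by the auxiliary fact it remains to show $P \subseteq D(P)$. Given $x \in P$, I would consider the $\vee$-closure of $P^c \cup \{x\}$, which one checks equals $P^c \cup \{\,p \vee x : p \in P^c\,\}$; this set properly contains $P^c$, so by the maximality clause of Theorem \ref{1mineq} it must contain $1$, and since $1 \notin P^c$ we obtain $p \in P^c$ with $p \vee x = 1$, i.e. $x \in \omega(P^c) = D(P)$. I expect this $\vee$-closure computation plus the maximality argument to be the one genuinely substantive step; it is the only place where minimality of $P$ is actually exploited.

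For (2) $\Rightarrow$ (3), fix $x \in A$. If $x \in P = D(P)$, a witness $a \in P^c$ with $a \vee x = 1$ lies in $x^\perp \setminus P$, so $x^\perp \not\subseteq P$. If $x \notin P$, then every $a \in x^\perp$ must lie in $P$, for otherwise $a, x \in P^c$ would give $a \vee x = 1 \in P^c$, contradicting properness; hence $x^\perp \subseteq P$ while $x \notin P$. Either way $P$ contains precisely one of $x$ and $x^\perp$.

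For (3) $\Rightarrow$ (1), I would verify the condition of Theorem \ref{1mineq}: $P^c$ is $\vee$-closed and omits $1$, and for maximality, if a $\vee$-closed $\mathscr{C}$ properly contains $P^c$ and omits $1$, pick $x \in \mathscr{C} \setminus P^c$, so $x \in P$; by (3), $x^\perp \not\subseteq P$, so some $a \in x^\perp$ lies in $P^c \subseteq \mathscr{C}$, whence $1 = x \vee a \in \mathscr{C}$, a contradiction. Thus $P^c$ is maximal and $P$ is a minimal prime filter.
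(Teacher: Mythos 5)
Your argument is correct, but note that the paper itself offers no proof to compare against: the statement is imported verbatim from \citet[Proposition 3.14]{raskon}, exactly like Theorem \ref{1mineq} on which you rely, so your write-up is a legitimate self-contained reconstruction rather than a variant of an in-paper argument. Each step checks out: the preliminary observations that $P^c$ is a lattice ideal and that $D(P)\subseteq P$ for prime $P$ are right; in (1)$\Rightarrow$(2) the description of the $\vee$-closure of $P^c\cup\{x\}$ as $P^c\cup\{p\vee x\mid p\in P^c\}$ is valid because $0\in P^c$ makes $x=0\vee x$ of the required form, and the maximality clause of Theorem \ref{1mineq} then forces $1=p\vee x$ for some $p\in P^c$, i.e.\ $x\in D(P)$; the case analysis in (2)$\Rightarrow$(3) and the maximality verification in (3)$\Rightarrow$(1) are both sound. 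Two cosmetic points: in (2)$\Rightarrow$(3), when $x\notin P$ the contradiction obtained from $a\vee x=1\in P^c$ is with the fact that $1\in P$ for every filter, not with properness of $P$; and you should say explicitly that (2) and (3) are read under the standing hypothesis that $P$ is a prime filter, since $D(P)=\omega(P^c)$ and the $\vee$-closedness of $P^c$ presuppose it. Within the paper one could also obtain (1)$\Rightarrow$(2) instantly from Corollary \ref{dfmpr}, but that corollary is quoted from the same external source, so your direct maximality argument is the more economical choice.
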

\begin{corollary}\label{mindperp}
Let $\mathfrak{A}$ be a residuated lattice. The following assertions are equivalent for any $x,y\in A$:
\begin{enumerate}
  \item  [$(1)$ \namedlabel{mindperp1}{$(1)$}] $x^{\perp}=y^{\perp}$;
  \item  [$(2)$ \namedlabel{mindperp2}{$(2)$}] $h(x)=h(y)$;
  \item  [$(3)$ \namedlabel{mindperp3}{$(3)$}] $d(x)=d(y)$.
\end{enumerate}
\end{corollary}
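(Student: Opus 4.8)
The plan is to prove the cycle $(1)\Rightarrow(3)\Rightarrow(2)\Rightarrow(1)$, in which the step $(3)\Rightarrow(2)$ is free and the two genuine steps rest on the characterization of minimal prime filters in Theorem~\ref{mincor} together with the representation of coannulets in Corollary~\ref{comincar}.

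First I would isolate the following auxiliary identity: for every $x\in A$,
\[d(x)=h(x^{\perp}),\]
that is, a minimal prime filter $\mathfrak{m}$ omits $x$ precisely when it contains the coannihilator $x^{\perp}$. This is read off directly from Theorem~\ref{mincor}\ref{mincor3}: since $\mathfrak{m}$ contains exactly one of $x$ or $x^{\perp}$, the condition $x\notin\mathfrak{m}$ is equivalent to $x^{\perp}\subseteq\mathfrak{m}$. (Here $h$ is being applied to the subset $x^{\perp}$ of $A$, which is legitimate since $h$ was defined on arbitrary subsets.)

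Granting this, $(1)\Rightarrow(3)$ is immediate: if $x^{\perp}=y^{\perp}$ then $d(x)=h(x^{\perp})=h(y^{\perp})=d(y)$. The implication $(3)\Rightarrow(2)$ is purely formal, because $h(x)$ and $d(x)$ are by definition complementary subsets of $Min(\mathfrak{A})$, so equality of one forces equality of the other. For $(2)\Rightarrow(1)$ — equivalently $(3)\Rightarrow(1)$ — I would apply Corollary~\ref{comincar} to the singleton $\{x\}$, giving $x^{\perp}=\bigcap\{\mathfrak{m}\in Min(\mathfrak{A})\mid x\notin\mathfrak{m}\}=k(d(x))$; then $d(x)=d(y)$ yields $x^{\perp}=k(d(x))=k(d(y))=y^{\perp}$.

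I do not expect a real obstacle; the only delicate point is the correct use of Theorem~\ref{mincor}\ref{mincor3} to establish $d(x)=h(x^{\perp})$, after which everything reduces to bookkeeping with the operators $h$, $d$, $k$. A parallel route that bypasses the auxiliary identity is to prove $(1)\Rightarrow(2)$ head-on: if $\mathfrak{m}\in h(x)$ then $x\in\mathfrak{m}$, so $x^{\perp}=y^{\perp}\nsubseteq\mathfrak{m}$, and hence $y\in\mathfrak{m}$ by Theorem~\ref{mincor}\ref{mincor3}; this gives $h(x)\subseteq h(y)$, and the reverse inclusion follows by symmetry.
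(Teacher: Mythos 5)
Your proposal is correct and takes essentially the same route as the paper: both hinge on Theorem~\ref{mincor}\ref{mincor3} to pass between membership of $x$ and containment of $x^{\perp}$ in a minimal prime, and on Corollary~\ref{comincar} (i.e.\ $x^{\perp}=k(d(x))$) to recover \ref{mindperp1} from \ref{mindperp3}, with the equivalence of \ref{mindperp2} and \ref{mindperp3} being trivial complementation in $Min(\mathfrak{A})$. Your packaging through the identity $d(x)=h(x^{\perp})$ and the cycle $(1)\Rightarrow(3)\Rightarrow(2)\Rightarrow(1)$ is only a cosmetic reordering of the paper's $(1)\Rightarrow(2)\Rightarrow(3)\Rightarrow(1)$; indeed your ``parallel route'' at the end is word for word the paper's argument for $(1)\Rightarrow(2)$.
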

\begin{proof}
\item [\ref{mindperp1}$\Rightarrow$\ref{mindperp2}:] Let $\mathfrak{m}\in h(x)$. By Theorem \ref{mincor}\ref{mincor3} follows that $y^{\perp}\nsubseteq \mathfrak{m}$ and so $y\in \mathfrak{m}$. It implies that $\mathfrak{m}\in h(y)$. Thus we have $h(x)\subseteq h(y)$. The other inclusion is analogous by symmetry.
\item [\ref{mindperp2}$\Rightarrow$\ref{mindperp3}:] It is evident.
\item [\ref{mindperp3}$\Rightarrow$\ref{mindperp1}:] by Corollary \ref{comincar} follows that $x^{\perp}=\bigcap d(x)=\bigcap d(y)=y^{\perp}$.
\end{proof}

\begin{corollary}\cite[Proposition 3.21]{raskon}\label{dfmpr}
Let $\mathfrak{A}$ be a residuated lattice. We have $D(P)=\bigcap \{\mathfrak{m}\in Min(\mathfrak{A})|\mathfrak{m}\subseteq P\}$.
\end{corollary}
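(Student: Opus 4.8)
The plan is to prove the two inclusions separately, leaning on the characterizations of minimal prime filters in Theorem \ref{mincor} and Theorem \ref{1mineq}. Before starting I would record the standing observation that if $Q$ is a proper prime filter of $\mathfrak{A}$ then $Q^{c}$ is an ideal of $\ell(\mathfrak{A})$: it is downward closed because $Q$ is upward closed, it is $\vee$-closed because $Q$ is prime, and it contains $0$ because $Q$ is proper. In particular $P^{c}$ (and $\mathfrak{m}^{c}$ for any minimal prime $\mathfrak{m}$) are ideals, so $\omega$ applies to them and $D(P)=\omega(P^{c})$ is legitimate.

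For $D(P)\subseteq \bigcap\{\mathfrak{m}\in Min(\mathfrak{A})\mid \mathfrak{m}\subseteq P\}$ I would first note that $\omega$ is order-preserving: $I\subseteq J$ gives $\omega(I)\subseteq\omega(J)$ straight from the definition. Now fix a minimal prime filter $\mathfrak{m}$ with $\mathfrak{m}\subseteq P$. Then $P^{c}\subseteq\mathfrak{m}^{c}$, hence $D(P)=\omega(P^{c})\subseteq\omega(\mathfrak{m}^{c})=D(\mathfrak{m})$, and by the equivalence of \ref{mincor1} and \ref{mincor2} in Theorem \ref{mincor} we have $D(\mathfrak{m})=\mathfrak{m}$. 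Thus $D(P)\subseteq\mathfrak{m}$ for every such $\mathfrak{m}$, which is exactly the asserted inclusion.

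For the reverse inclusion I would argue by contraposition: suppose $a\notin D(P)=\omega(P^{c})$, so no element of $P^{c}$ joins with $a$ to give $1$. Since $P^{c}$ is a $\vee$-closed set containing $0$, the $\vee$-closed subset of $\mathfrak{A}$ generated by $P^{c}\cup\{a\}$ is $\mathscr{C}=P^{c}\cup\{a\vee p\mid p\in P^{c}\}$ (note $a=a\vee 0$ already lies in the second piece). I would then check $1\notin\mathscr{C}$: indeed $1\notin P^{c}$ since $1\in P$, and an equality $1=a\vee p$ with $p\in P^{c}$ would say precisely $a\in\omega(P^{c})=D(P)$, contrary to assumption. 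By Zorn's lemma $\mathscr{C}$ extends to a $\vee$-closed set $\mathscr{D}$ that is maximal with respect to not containing $1$, and Theorem \ref{1mineq} then makes $\mathfrak{m}:=\mathscr{D}^{c}$ a minimal prime filter. From $P^{c}\subseteq\mathscr{C}\subseteq\mathscr{D}$ we get $\mathfrak{m}\subseteq P$, and from $a\in\mathscr{C}\subseteq\mathscr{D}$ we get $a\notin\mathfrak{m}$; hence $a\notin\bigcap\{\mathfrak{m}\in Min(\mathfrak{A})\mid\mathfrak{m}\subseteq P\}$, as required.

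The main obstacle is this reverse inclusion, and within it the only delicate point is computing the $\vee$-closed set generated by $P^{c}\cup\{a\}$ correctly and verifying that it avoids $1$ exactly when $a\notin D(P)$; once that is in hand, the conclusion is a routine invocation of Zorn's lemma together with the already-established correspondence (Theorem \ref{1mineq}) between minimal prime filters and maximal $\vee$-closed sets avoiding $1$.
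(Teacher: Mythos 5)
Your proof is correct: the forward inclusion via monotonicity of $\omega$ together with $D(\mathfrak{m})=\mathfrak{m}$ from Theorem \ref{mincor}, and the reverse inclusion by extending the $\vee$-closed set $P^{c}\cup\{a\vee p\mid p\in P^{c}\}$ (which avoids $1$ exactly because $a\notin\omega(P^{c})$) to a maximal one and invoking Theorem \ref{1mineq}, is exactly the standard argument this paper relies on (it cites the result to \cite{raskon} without reproducing a proof). No gaps; the only point worth making explicit is that a maximal element of the $\vee$-closed sets containing $\mathscr{C}$ and avoiding $1$ is automatically maximal among all $\vee$-closed sets avoiding $1$, which is immediate since any larger such set still contains $\mathscr{C}$.
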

\section{Quasicomplemented residuated lattices}\label{sec3}

In this section we introduce and study the notion of quasicomplemented residuated lattices.
\begin{definition}
Let $\mathfrak{A}$ be a residuated lattice. $\mathfrak{A}$ is called \textit{quasicomplemented} provided that for any $x\in A$, there exists $y\in A$ such that $x^{\perp\perp}=y^{\perp}$.
\end{definition}
\begin{proposition}\label{qucomeq}
Let $\mathfrak{A}$ be a residuated lattice. The following assertions are equivalent:
\begin{enumerate}
\item  [$(1)$ \namedlabel{qucomeq1}{$(1)$}] $\mathfrak{A}$ is quasicomplemented;
\item  [$(2)$ \namedlabel{qucomeq2}{$(2)$}] for any $x\in A$, there exists $y\in A$ such that $x\odot y\in \mathfrak{d(A)}$ and $x\vee y=1$;
\item  [$(3)$ \namedlabel{qucomeq3}{$(3)$}] $\gamma(\mathfrak{A})$ is a Boolean lattice.
\end{enumerate}
\end{proposition}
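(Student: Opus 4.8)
The plan is to establish the cycle $(1)\Rightarrow(2)\Rightarrow(3)\Rightarrow(1)$, using the fact from Proposition~\ref{4fxpro} that $\gamma(\mathfrak{A})$ is a sublattice of the Boolean lattice $\Gamma(\mathfrak{A})$ together with the computation rules $x^{\perp}\cap y^{\perp}=(x\odot y)^{\perp}$ and $(x\vee y)^{\perp}=x^{\perp}\vee^{\Gamma}y^{\perp}$, and the characterization $\mathfrak{d}(\mathfrak{A})=\{a\in A\mid a^{\perp}=\{1\}\}$ of dense elements.

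For $(1)\Rightarrow(2)$: given $x\in A$, quasicomplementedness yields $y\in A$ with $x^{\perp\perp}=y^{\perp}$. Then $x^{\perp}\cap y^{\perp}=x^{\perp}\cap x^{\perp\perp}=\{1\}$ by Proposition~\ref{1fxpro}\ref{1fxpro2}, and by Proposition~\ref{4fxpro}\ref{4fxpro2} this says $(x\odot y)^{\perp}=\{1\}$, i.e.\ $x\odot y\in\mathfrak{d}(\mathfrak{A})$. For the join, note $(x\vee y)^{\perp}=x^{\perp}\vee^{\Gamma}y^{\perp}\supseteq x^{\perp}\vee^{\Gamma}x^{\perp\perp}$; since $x^{\perp}$ and $x^{\perp\perp}$ are complements of each other in the Boolean lattice $\Gamma(\mathfrak{A})$, their join is $A$, so $(x\vee y)^{\perp}=A$, which forces $x\vee y=1$ (as $1\in(x\vee y)^{\perp}$ would otherwise be the only element). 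For $(2)\Rightarrow(1)$ (which also gives $(3)\Rightarrow(1)$ once we have $(2)\Leftrightarrow(1)$, or can be folded into the direct $(3)\Rightarrow(1)$ argument): from $x\odot y$ dense and $x\vee y=1$ I would show $y^{\perp}=x^{\perp\perp}$. Indeed $x^{\perp}\cap y^{\perp}=(x\odot y)^{\perp}=\{1\}$ gives $y^{\perp}\subseteq x^{\perp\perp}$ by taking $\perp$ and using that $x^{\perp\perp}$ is the complement of $x^{\perp}$ in $\Gamma(\mathfrak{A})$; conversely $x\vee y=1$ gives $x^{\perp}\vee^{\Gamma}y^{\perp}=(x\vee y)^{\perp}=A$, so $y^{\perp}\supseteq$ the complement of $x^{\perp}$, namely $y^{\perp}\supseteq x^{\perp\perp}$. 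Hence $y^{\perp}=x^{\perp\perp}$.

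For the equivalence with $(3)$: $\gamma(\mathfrak{A})$ is always a bounded sublattice of $\Gamma(\mathfrak{A})$ with bottom $\{1\}=1^{\perp}$ and top $A=0^{\perp}$, and it is distributive. So $\gamma(\mathfrak{A})$ is Boolean iff every $x^{\perp}$ has a complement \emph{within} $\gamma(\mathfrak{A})$. If $(1)$ holds, then for each $x$ the element $x^{\perp\perp}=y^{\perp}\in\gamma(\mathfrak{A})$ is the complement of $x^{\perp}$ (complementation in $\Gamma(\mathfrak{A})$ is unique and equals $(\cdot)^{\perp}$), so $\gamma(\mathfrak{A})$ is Boolean. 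Conversely, if $\gamma(\mathfrak{A})$ is Boolean, the complement of $x^{\perp}$ in $\gamma(\mathfrak{A})$ has the form $y^{\perp}$ for some $y$; since complements in the ambient Boolean lattice $\Gamma(\mathfrak{A})$ are unique, $y^{\perp}=(x^{\perp})^{c}=x^{\perp\perp}$, giving $(1)$.

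The only genuinely delicate point is bookkeeping about \emph{where} complementation takes place: one must keep straight that $\Gamma(\mathfrak{A})$ is Boolean with complement operation $X\mapsto X^{\perp}$, that $\gamma(\mathfrak{A})$ sits inside it as a distributive sublattice sharing the same $\{1\}$ and $A$, and that a distributive sublattice of a Boolean lattice containing $0$ and $1$ is itself Boolean precisely when it is closed under the ambient complement. Once that is said cleanly, each implication is a two-line manipulation with the identities of Propositions~\ref{1fxpro} and~\ref{4fxpro}. I would also remark that the passage "$(x\vee y)^{\perp}=A$ forces $x\vee y=1$" uses only that $A^{\perp}=\{1\}$ would fail otherwise—more directly, $z\in(x\vee y)^{\perp}$ for all $z$, in particular taking $z=0$ gives $(x\vee y)\vee 0=1$, i.e.\ $x\vee y=1$.
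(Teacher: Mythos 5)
Your proposal is correct and takes essentially the same route as the paper: both rest on the identities $x^{\perp}\cap y^{\perp}=(x\odot y)^{\perp}$ and $x^{\perp}\vee^{\Gamma}y^{\perp}=(x\vee y)^{\perp}$ together with the fact that $x^{\perp\perp}$ is the (pseudo)complement of $x^{\perp}$, the paper merely arranging them as the cycle \ref{qucomeq1}$\Rightarrow$\ref{qucomeq2}$\Rightarrow$\ref{qucomeq3}$\Rightarrow$\ref{qucomeq1} and obtaining $x\vee y=1$ more directly from $x\in x^{\perp\perp}=y^{\perp}$. Only a cosmetic slip in your bookkeeping: $0^{\perp}=\{1\}$ and $1^{\perp}=A$, not the other way around.
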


\begin{proof}
\begin{enumerate}
\item  []\ref{qucomeq1}$\Rightarrow$\ref{qucomeq2}: Consider $x\in A$. So there exists $y\in A$ such that $x^{\perp\perp}=y^{\perp}$. Applying Proposition \ref{1fxpro}\ref{1fxpro2} and \ref{4fxpro}\ref{4fxpro2}, it follows that $x\odot y$ is a dense element and Proposition \ref{1fxpro}\ref{1fxpro2} shows that $x\in x^{\perp\perp}=y^{\perp}$ and so $x\vee y=1$.
\item  []\ref{qucomeq2}$\Rightarrow$\ref{qucomeq3}: By applying Proposition \ref{4fxpro}(\ref{4fxpro2} and \ref{4fxpro4}), it is evident.
\item  []\ref{qucomeq3}$\Rightarrow$\ref{qucomeq1}: Let $x\in A$. So there exists some $y\in A$ such that $x^{\perp}\cap y^{\perp}=\{1\}$ and $x^{\perp}\vee^{\Gamma} y^{\perp}=A$. Applying Proposition \ref{9fxpro}, the former states $y^{\perp}\subseteq x^{\perp\perp}$, and the latter states the reverse inclusion.
\end{enumerate}
\end{proof}

In the following, we derive a sufficient condition for a residuated lattice to become quasicomplemented.
\begin{proposition}\label{coannprinstar}
Let $\mathfrak{A}$ be a residuated lattice. $\mathfrak{A}$ is quasicomplemented provided that in which any coannulet is principal.
\end{proposition}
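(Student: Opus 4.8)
The plan is to verify the defining property directly: given an arbitrary $x\in A$, I would exhibit the required element $y$ as (a generator of) the principal coannulet $x^{\perp}$. So the first step is to note that $x^{\perp}$ is a coannulet, hence by hypothesis a principal filter, and to fix $z\in A$ with $x^{\perp}=\mathscr{F}(z)$.

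The second, and essentially only, step is to compute the double coannihilator. By definition $x^{\perp\perp}=(x^{\perp})^{\perp}=\bigl(\mathscr{F}(z)\bigr)^{\perp}$, and Proposition \ref{1fxpro}\ref{1fxpro4}, applied with $X=\{z\}$, gives $\bigl(\mathscr{F}(z)\bigr)^{\perp}=z^{\perp}$. Therefore $x^{\perp\perp}=z^{\perp}$, which is exactly the condition in the definition of a quasicomplemented residuated lattice, with the witness $y:=z$. Since $x$ was arbitrary, $\mathfrak{A}$ is quasicomplemented. (If one prefers to land on condition \ref{qucomeq2} of Proposition \ref{qucomeq} instead: from $x^{\perp\perp}=z^{\perp}$ and $x\in x^{\perp\perp}$, Proposition \ref{1fxpro}\ref{1fxpro3}, one gets $x\vee z=1$; and $(x\odot z)^{\perp}=x^{\perp}\cap z^{\perp}=x^{\perp}\cap x^{\perp\perp}=\{1\}$ by Propositions \ref{4fxpro}\ref{4fxpro2} and \ref{1fxpro}\ref{1fxpro2}, so $x\odot z$ is dense.)

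There is no real obstacle here; the argument is short once one recognizes the right lemma to invoke. The single point requiring a little care is to pass from $\bigl(\mathscr{F}(z)\bigr)^{\perp}$ to $z^{\perp}$ via Proposition \ref{1fxpro}\ref{1fxpro4} rather than attempting to describe $\bigl(\mathscr{F}(z)\bigr)^{\perp}$ elementwise; after that the conclusion is immediate.
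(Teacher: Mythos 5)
Your proof is correct and follows essentially the same route as the paper: write the coannulet $x^{\perp}$ as a principal filter $\mathscr{F}(z)$ and apply Proposition \ref{1fxpro}\ref{1fxpro4} to get $x^{\perp\perp}=z^{\perp}$. The optional detour through condition \ref{qucomeq2} of Proposition \ref{qucomeq} is fine but unnecessary.
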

\begin{proof}
Consider $x\in A$. So there exist $a\in A$ such that $x^{\perp}=\mathscr{F}(y)$. Using Proposition \ref{1fxpro}\ref{1fxpro4}, it follows that $x^{\perp\perp}=y^{\perp}$ and so $\mathfrak{A}$ is quasicomplemented.
\end{proof}

In the following proposition, we derive a necessary and sufficient condition for any residuated lattice to become quasicomplemented.
\begin{proposition}\label{44fxpro}
Let $\mathfrak{A}$ be a residuated lattice and $F$ be a filter of $\mathfrak{A}$. The following assertions are equivalent:
\begin{enumerate}
\item  [$(1)$ \namedlabel{44fxpro1}{$(1)$}] $\mathfrak{A}$ is quasicomplemented;
\item  [$(2)$ \namedlabel{44fxpro2}{$(2)$}] any prime filter not containing any dense element is minimal prime;
\item  [$(3)$ \namedlabel{44fxpro3}{$(3)$}] any filter not containing any dense element is contained in a minimal prime filter.
\end{enumerate}
\end{proposition}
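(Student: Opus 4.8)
The plan is to prove the cyclic chain $\ref{44fxpro1}\Rightarrow\ref{44fxpro2}\Rightarrow\ref{44fxpro3}\Rightarrow\ref{44fxpro1}$ (the filter $F$ named in the statement plays no role in \ref{44fxpro1} or \ref{44fxpro2}, and \ref{44fxpro3} is read as an assertion about every filter). Two facts will be used throughout: the characterization in Theorem \ref{mincor}\ref{mincor3} that a prime filter $P$ is minimal precisely when, for each $x\in A$, it contains exactly one of $x$ and $x^{\perp}$, and the equivalent form \ref{qucomeq2} of quasicomplementedness. For $\ref{44fxpro1}\Rightarrow\ref{44fxpro2}$, I would start with a prime filter $P$ containing no dense element and verify the minimality criterion for an arbitrary $x$: if $x\notin P$ then, since $t\vee x=1\in P$ for every $t\in x^{\perp}$, primeness gives $x^{\perp}\subseteq P$, so $P$ always contains at least one of $x$, $x^{\perp}$; and it cannot contain both, for if $x\in P$ and $x^{\perp}\subseteq P$, then choosing $y$ with $x\vee y=1$ and $x\odot y\in\mathfrak{d}(\mathfrak{A})$ (Proposition \ref{qucomeq}\ref{qucomeq2}) we get $y\in x^{\perp}\subseteq P$, whence the dense element $x\odot y$ lies in $P$, a contradiction. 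Hence $P\in Min(\mathfrak{A})$.

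For $\ref{44fxpro2}\Rightarrow\ref{44fxpro3}$, the point is that $\mathfrak{d}(\mathfrak{A})$, being an ideal of $\ell(\mathfrak{A})$, is a $\vee$-closed subset of $A$. If a filter $F$ contains no dense element, then $F$ does not meet $\mathfrak{d}(\mathfrak{A})$, so Theorem \ref{prfilth} produces a prime filter $P\supseteq F$ maximal with respect to not meeting $\mathfrak{d}(\mathfrak{A})$; such a $P$ contains no dense element, hence is a minimal prime filter by \ref{44fxpro2}, and $F\subseteq P$.

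The substantive direction is $\ref{44fxpro3}\Rightarrow\ref{44fxpro1}$, and here I would fix $x\in A$ and consider the filter $F:=\mathscr{F}(x)\veebar x^{\perp}$. The key claim is that $F$ must contain a dense element: otherwise \ref{44fxpro3} would place $F$ inside some minimal prime filter $\mathfrak{m}$, and then $\mathfrak{m}$ would contain both $x$ (from $\mathscr{F}(x)$) and $x^{\perp}$, contradicting Theorem \ref{mincor}\ref{mincor3}. Granting this, pick a dense $d\in F$; by the description of generated filters in Remark \ref{genfilprop}\ref{genfilprop1} there are $f\in x^{\perp}$ and $n\geq 1$ with $f\odot x^{n}\leq d$. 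Since $(x^{n})^{\perp}=x^{\perp}$ (an easy induction from Proposition \ref{4fxpro}\ref{4fxpro2}), Proposition \ref{4fxpro}(\ref{4fxpro1},\ref{4fxpro2}) gives $x^{\perp}\cap f^{\perp}=(f\odot x^{n})^{\perp}\subseteq d^{\perp}=\{1\}$, hence $(x\odot f)^{\perp}=x^{\perp}\cap f^{\perp}=\{1\}$, i.e. $x\odot f\in\mathfrak{d}(\mathfrak{A})$; and $f\in x^{\perp}$ gives $x\vee f=1$. Thus $y:=f$ realizes condition \ref{qucomeq2}, so Proposition \ref{qucomeq} yields that $\mathfrak{A}$ is quasicomplemented. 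I expect the only real obstacle to be the recognition that $\mathscr{F}(x)\veebar x^{\perp}$ is the right test filter and that its freedom from dense elements is incompatible with being swallowed by a minimal prime; everything after that is routine manipulation of coannihilators.
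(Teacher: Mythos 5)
Your proof is correct and follows essentially the same route as the paper: the same cyclic decomposition, the same use of Proposition \ref{qucomeq}\ref{qucomeq2} and Theorem \ref{mincor} in \ref{44fxpro1}$\Rightarrow$\ref{44fxpro2}, Theorem \ref{prfilth} in \ref{44fxpro2}$\Rightarrow$\ref{44fxpro3}, and the same test filter $\mathscr{F}(x)\veebar x^{\perp}$ with a dense element in \ref{44fxpro3}$\Rightarrow$\ref{44fxpro1}. The only (harmless) variations are that you verify minimality via criterion \ref{mincor3} rather than $P=D(P)$, and you close the last step by feeding $x\vee f=1$, $x\odot f$ dense back into Proposition \ref{qucomeq} instead of computing $x^{\perp\perp}=f^{\perp}$ directly with \ref{res2}.
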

\begin{proof}
\begin{enumerate}
\item  []\ref{44fxpro1}$\Rightarrow$\ref{44fxpro2}: Let $P$ be a prime filter such that $P\cap \mathfrak{d}(\mathfrak{A})=\emptyset$. Consider $x\in P$. Applying Proposition \ref{qucomeq}\ref{qucomeq2}, there exists $y\in A$ such that $x\odot y$ is dense and $x\vee y=1$. It shows that $x\odot y\notin P$ and so $y\notin P$. Hence, $x\in D(P)$ and it states that $P=D(P)$. So the result holds by Theorem \ref{mincor}.
\item  []\ref{44fxpro2}$\Rightarrow$\ref{44fxpro3}: It follows by Theorem \ref{prfilth}.
\item  []\ref{44fxpro3}$\Rightarrow$\ref{44fxpro1}: Let $x\in A$. By Theorem \ref{mincor}\ref{mincor3} follows that $\mathscr{F}(x)\veebar x^{\perp}$ cannot be contained in any minimal prime filter and so it contains a dense element like $d$. Hence, there are $a\in \mathscr{F}(x)$ and $b\in x^{\perp}$ such that $a\odot b\leq d$. So for some integer $n$ follows that $x^{n}\odot b$ is dense. Let $u\in b^{\perp}$ and $v\in x^{\perp}$. Thus we have $(u\vee v)\vee b=1$ and $(u\vee v)\vee x^{n}=1$, and by using \ref{res2} we deduce that $(u\vee v)\vee (x^{n}\odot b)=1$. It shows that $u\vee v=1$ and it means that $b^{\perp}\subseteq x^{\perp\perp}$. The other inclusion is evident by Proposition \ref{1fxpro}\ref{1fxpro1}, and so the result holds.
\end{enumerate}
\end{proof}

Quasicomplemented residuated lattices are characterized under the name of $\star$-residuated lattices in \cite{rasdeh}. In the following theorem, we give a topological characterization for quasicomplemented residuated lattice.
\begin{theorem}\label{compmin}
Let $\mathfrak{A}$ be a residuated lattice. The following assertions are equivalent:
\begin{enumerate}
   \item [(1) \namedlabel{compmin1}{(1)}] $\mathfrak{A}$ is quasicomplemented;
   \item [(2) \namedlabel{compmin2}{(2)}] $\tau_{h}$ and $\tau_{d}$ coincide;
   \item [(3) \namedlabel{compmin3}{(3)}] $(Min(\mathfrak{A});\tau_{h})$ is compact.
 \end{enumerate}
\end{theorem}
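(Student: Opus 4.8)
The plan is to derive both topological conditions from Proposition~\ref{44fxpro}, whose item~\ref{44fxpro3} already characterises quasicomplementedness as the statement that every filter containing no dense element is contained in a minimal prime filter. What makes the translation possible is a small dictionary between open covers of $Min(\mathfrak{A})$ (whose topology $\tau_{h}$ has the clopen basis $\{d(x)\mid x\in A\}$ and whose topology $\tau_{d}$ has the open basis $\{h(x)\mid x\in A\}$), filters, and dense elements. The key preliminary fact I would prove is that $h(z)=\emptyset$ if and only if $z$ is dense. In one direction, if $z$ is dense then no minimal prime filter contains $z$, since a minimal prime filter $\mathfrak{m}$ equals $D(\mathfrak{m})=\omega(\mathfrak{m}^{c})$ (Theorem~\ref{mincor}\ref{mincor2}) and a proper $\omega$-filter carries no dense element (Proposition~\ref{profilnoden}); in the other, if $h(z)=\emptyset$ then Theorem~\ref{mincor}\ref{mincor3} gives $z^{\perp}\subseteq\mathfrak{m}$ for every $\mathfrak{m}\in Min(\mathfrak{A})$, so Corollary~\ref{comincar} forces $z^{\perp\perp}=A$ and hence $z^{\perp}=z^{\perp}\cap z^{\perp\perp}=\{1\}$ by Proposition~\ref{1fxpro}\ref{1fxpro2}. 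Together with the elementary identities $h(a\odot b)=h(a)\cap h(b)$ and $h(a^{n})=h(a)$, this yields the dictionary: a finite family $d(x_{1}),\dots,d(x_{k})$ covers $Min(\mathfrak{A})$ exactly when $x_{1}\odot\cdots\odot x_{k}$ is dense, and an arbitrary family $\{d(x_{i})\}_{i\in I}$ covers $Min(\mathfrak{A})$ exactly when the filter $\mathscr{F}(\{x_{i}\mid i\in I\})$ is contained in no minimal prime filter.

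With this in hand I would prove the cycle $(1)\Rightarrow(2)\Rightarrow(3)\Rightarrow(1)$. For $(1)\Rightarrow(2)$: one always has $\tau_{d}\subseteq\tau_{h}$, because $h(x)=\bigcup_{a\in x^{\perp}}d(a)$ by Theorem~\ref{mincor}\ref{mincor3} (cf.\ Corollary~\ref{minspapro}); for the reverse inclusion, given $x\in A$ pick by Proposition~\ref{qucomeq}\ref{qucomeq2} an element $y$ with $x\vee y=1$ and $x\odot y$ dense, and check $d(x)=h(y)$ --- the inclusion $d(x)\subseteq h(y)$ holds because $x\vee y=1\in\mathfrak{m}$ with $\mathfrak{m}$ prime, while $h(y)\subseteq d(x)$ holds because $x,y\in\mathfrak{m}$ would place the dense element $x\odot y$ in $\mathfrak{m}$, contradicting the preliminary fact. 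Hence every basic $\tau_{h}$-open set is $\tau_{d}$-open and $\tau_{h}=\tau_{d}$. For $(3)\Rightarrow(1)$: if $F$ is a filter with no dense element that lies in no minimal prime filter, then $\{d(a)\mid a\in F\}$ is an open cover of the compact space $Min(\mathfrak{A})$; a finite subcover makes some $a_{1}\odot\cdots\odot a_{n}\in F$ dense, which is impossible, so $F$ lies in a minimal prime filter and Proposition~\ref{44fxpro} gives quasicomplementedness. (The implication $(1)\Rightarrow(3)$ is the same bookkeeping run backwards: a cover $\{d(x_{i})\}_{i\in I}$ makes $\mathscr{F}(\{x_{i}\mid i\in I\})$ lie in no minimal prime filter, so it contains a dense element $d$; by the description of generated filters in Remark~\ref{genfilprop} some $x_{i_{1}}^{n_{1}}\odot\cdots\odot x_{i_{k}}^{n_{k}}\leq d$ is then dense, and $d(x_{i_{1}}),\dots,d(x_{i_{k}})$ already cover $Min(\mathfrak{A})$.)

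The substantive link is $(2)\Rightarrow(3)$, for which I would prove that $(Min(\mathfrak{A});\tau_{d})$ is \emph{always} compact; since $\tau_{h}=\tau_{d}$, compactness then passes to $\tau_{h}$. By the Alexander subbasis lemma it suffices to extract a finite subcover from a cover $\{h(x_{i})\}_{i\in I}$ by subbasic opens, that is, from a family for which every minimal prime filter contains some $x_{i}$. Let $\mathscr{C}$ be the set of finite joins of the $x_{i}$, a $\vee$-closed set. If $1\notin\mathscr{C}$, extend $\mathscr{C}$ (Zorn's lemma) to a $\vee$-closed set maximal with respect to not containing $1$; its complement is a minimal prime filter by Theorem~\ref{1mineq} and is disjoint from all the $x_{i}$, a contradiction. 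So $1=x_{i_{1}}\vee\cdots\vee x_{i_{k}}$ for some finite subset, and as each minimal prime filter is prime and contains $1$, it contains one of $x_{i_{1}},\dots,x_{i_{k}}$; that is the required finite subcover.

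I expect the main obstacle to be this unconditional compactness of $(Min(\mathfrak{A});\tau_{d})$ --- it is what makes $(2)$ cost essentially nothing beyond $(1)\Leftrightarrow(3)$ --- together with getting the dictionary of the first paragraph exactly right, in particular the equivalence between membership in no minimal prime filter and density, and the matching of finite subcovers with finitely generated subfilters possessing a dense generator. Once these are secured, $(1)\Leftrightarrow(3)$ is essentially a restatement of Proposition~\ref{44fxpro} through the dictionary.
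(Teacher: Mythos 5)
Your proposal is correct, but it necessarily takes a different route from the paper, because the paper gives no argument at all: Theorem \ref{compmin} is disposed of by citing \citet[Theorem 4.6]{rasdeh}. What you have written is a self-contained proof built from the material recalled in Section \ref{sec2}: the fact that $h(z)=\emptyset$ exactly when $z$ is dense, the identity $h(x)=\bigcup_{a\in x^{\perp}}d(a)$ coming from Theorem \ref{mincor}\ref{mincor3} (which gives the unconditional containment $\tau_{d}\subseteq\tau_{h}$ and is precisely what the zero-dimensionality in Corollary \ref{minspapro}\ref{minspapro1} reflects), the unconditional compactness of $(Min(\mathfrak{A});\tau_{d})$ obtained from Theorem \ref{1mineq} together with Zorn's lemma, and Proposition \ref{44fxpro} to convert compactness of $\tau_{h}$ back into quasicomplementedness. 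This is in substance the classical Speed--Cornish argument for minimal prime spectra transported to filters; the citation keeps the paper short by deferring to where the hull-kernel machinery is developed, while your version makes the theorem checkable inside the paper, and every individual step does check out (the Alexander subbasis lemma is even avoidable, since $\{h(x)\mid x\in A\}$ is a basis closed under finite intersections, so an arbitrary open cover refines to a basic one). One caveat worth flagging: your ``always'' containment is $\tau_{d}\subseteq\tau_{h}$, whereas Corollary \ref{minspapro}\ref{minspapro2} as quoted says that $\tau_{d}$ is finer than $\tau_{h}$; read literally, the two together would force the topologies to coincide in every residuated lattice and the theorem would collapse, so you are right to derive your containment directly from Theorem \ref{mincor}\ref{mincor3} rather than lean on that wording, and it would be worth a remark that the intended reading of the quoted corollary must be the one your computation establishes.
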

\begin{proof}
It follows by \citet[Theorem 4.6]{rasdeh}.
\end{proof}
\section{Disjunctive residuated lattices}\label{sec4}

\cite{spe} introduced a certain class of distributive lattices with zero named disjunctive lattices. This notion has been discussed in semilattices by \cite{buc} and in commutative semigroups by \cite{kis}. \citet[Theorem 7.6]{cor} proved that if $\mathfrak{A}$ is a disjunctive normal lattice and $Max(\mathfrak{A})$, the space of maximal filters of $\mathfrak{A}$ with the hull-kernel topology, is a compact Hausdorff totally disconnected space, then $\mathfrak{A}$ is complementedly normal. Also, \citet[Proposition 2.3]{cor1} showed that a disjunctive normal lattice is dual isomorphic to its lattice of annulets. Actually, disjunctive lattices are themselves important in the study of annulets; information can be obtained by dualizing Banaschewski's results in \citep[Section 4]{ban}. In this section, we introduce and study notions of disjunctive and weakly disjunctive residuated lattice.

Let $\mathfrak{A}$ be a residuated lattice. We set $D(\mathfrak{A})=\{d(x)|x\in A\}$ and $H(\mathfrak{A})=\{h(x)|x\in A\}$. By \citet[Proposition 3.6 and 3.10]{rasdeh}, it follows that $(D(\mathfrak{A});\cap,\cup,d(1)=\emptyset,d(0)=Min(\mathfrak{A}))$ and $(H(\mathfrak{A});\cap,\cup,h(0)=\emptyset,h(1)=Min(\mathfrak{A}))$ are bounded lattices. Consider the following diagram;
\begin{figure}[h!]
\centering
  \begin{tikzpicture}
  \matrix (m) [matrix of math nodes,row sep=8em,column sep=8em,minimum width=2em]
  {
     A & \mathscr{PF}(\mathfrak{A})& D(\mathfrak{A}) \\
     {} & \gamma(\mathfrak{A}) &  H(\mathfrak{A}) \\};
  \path[-stealth]
    (m-1-1) edge node [above] {$f_1:x\mapsto \mathscr{F}(x)$} (m-1-2)
            edge node[sloped, above, midway] {$f_2:x\mapsto x^{\perp}$} (m-2-2)
    (m-1-2) edge node [above] {$f_3:\mathscr{F}(x)\mapsto d(x)$} (m-1-3)
            edge node [sloped,above, midway] {$f_4:\mathscr{F}(x)\mapsto x^{\perp}$} (m-2-2)
    (m-1-3) edge node [sloped,above, midway] {$f_5:d(x)\mapsto h(x)$} (m-2-3)
    (m-2-2) edge node [below] {$f_6:x^{\perp}\mapsto h(x)$} (m-2-3);
\end{tikzpicture}
\caption{} \label{fig:M1}
\end{figure}

Recalling that, if $\mathfrak{A}$ and $\mathfrak{B}$ are two algebras of a same type and $f:\mathfrak{A}\longrightarrow \mathfrak{B}$ is a homomorphism, then $\kappa(f)=\{(a_1,a_2)\in A^2|f(a_1)=f(a_2)\}$ is a congruence relation on $\mathfrak{A}$. The following remark has a routine verification.
\begin{remark}\label{diarem}
\begin{enumerate}
   \item [(1) \namedlabel{diarem1}{(1)}] By Proposition \ref{1fxpro}\ref{1fxpro4} and Corollary \ref{mindperp} follows that $f_3,f_4,f_5$ and $f_6$ are well-define.
   \item [(2) \namedlabel{diarem2}{(2)}] $f_2=f_4f_1$ and $f_5f_3=f_6f_4$.
   \item [(3) \namedlabel{diarem3}{(3)}] $f_2,f_3$ are lattice epimorphisms and $f_1,f_4$ are dual lattice epimorphisms.
   \item [(4) \namedlabel{diarem4}{(4)}] $f_5$ is a dual lattice isomorphism and $f_6$ is a lattice isomorphism.
   \item [(5) \namedlabel{diarem5}{(5)}] $\Re:=\kappa(f_3)=\kappa(f_4)$, so $f_3$ is injective if and only if $f_4$ is injective.
   \item [(6) \namedlabel{diarem6}{(6)}] $f_2$ is injective if and only if $f_1,f_4$ are injective.
   \item [(7) \namedlabel{diarem7}{(7)}] $\mathscr{F}(1)/\Re=\{\mathscr{F}(1)\}$ and $\mathscr{F}(0)/\Re=\{\mathscr{F}(x)|x\in \mathfrak{d(A)}\}$.
   \item [(8) \namedlabel{diarem8}{(8)}] $1/\kappa(f_2)=\{1\}$ and $0/\kappa(f_2)=\mathfrak{d(A)}$.
   \item [(9) \namedlabel{diarem9}{(9)}] $1/\kappa(f_1)=\{1\}$ and $0/\kappa(f_1)=N(\mathfrak{A})$.
 \end{enumerate}
\end{remark}
\begin{proposition}\label{equqaudis}
Let $\mathfrak{A}$ be a residuated lattice. The following assertions are equivalent:
\begin{enumerate}
   \item [(1) \namedlabel{equqaudis1}{(1)}] $\mathfrak{A}$ is quasicomplemented;
   \item [(2) \namedlabel{equqaudis2}{(2)}] $\ell(\mathfrak{A})/\kappa(f_2)$ is a Boolean lattice;
   \item [(3) \namedlabel{equqaudis3}{(3)}] $\mathscr{PF}(\mathfrak{A})/\Re$ is a Boolean lattice.
 \end{enumerate}
\end{proposition}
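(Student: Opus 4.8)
The plan is to collapse all three conditions onto a single statement, namely that the coannulet lattice $\gamma(\mathfrak{A})$ is Boolean, which is exactly condition \ref{qucomeq3} of Proposition~\ref{qucomeq}. The two lattice isomorphisms that make this work are already encoded in the diagram of Figure~\ref{fig:M1} together with the bookkeeping collected in Remark~\ref{diarem}; the only external input is the observation that being a Boolean lattice is a self-dual property, preserved and reflected by (dual) lattice isomorphisms.

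First I would treat \ref{equqaudis1}$\Leftrightarrow$\ref{equqaudis2}. By Remark~\ref{diarem3}, $f_2\colon \ell(\mathfrak{A})\to \gamma(\mathfrak{A})$, $x\mapsto x^{\perp}$, is a lattice epimorphism, and $\kappa(f_2)$ is its kernel congruence by definition. The homomorphism theorem for bounded lattices then yields $\ell(\mathfrak{A})/\kappa(f_2)\cong \gamma(\mathfrak{A})$. Hence $\ell(\mathfrak{A})/\kappa(f_2)$ is a Boolean lattice if and only if $\gamma(\mathfrak{A})$ is, and by Proposition~\ref{qucomeq} (\ref{qucomeq1}$\Leftrightarrow$\ref{qucomeq3}) the latter is equivalent to $\mathfrak{A}$ being quasicomplemented.

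Next, \ref{equqaudis1}$\Leftrightarrow$\ref{equqaudis3}. By Remark~\ref{diarem5} we have $\Re=\kappa(f_4)$, and by Remark~\ref{diarem3} the map $f_4\colon \mathscr{PF}(\mathfrak{A})\to \gamma(\mathfrak{A})$, $\mathscr{F}(x)\mapsto x^{\perp}$, is a dual lattice epimorphism; equivalently, it is a lattice epimorphism onto the order dual $\gamma(\mathfrak{A})^{\mathrm{op}}$, so the homomorphism theorem gives $\mathscr{PF}(\mathfrak{A})/\Re\cong \gamma(\mathfrak{A})^{\mathrm{op}}$. (One could equally route through $f_3$: $\mathscr{PF}(\mathfrak{A})/\Re\cong D(\mathfrak{A})$, and then $D(\mathfrak{A})\cong H(\mathfrak{A})^{\mathrm{op}}\cong \gamma(\mathfrak{A})^{\mathrm{op}}$ using the dual isomorphism $f_5$ and the isomorphism $f_6$ of Remark~\ref{diarem4}.) Since distributivity, complementedness and boundedness are all self-dual, a bounded lattice is Boolean precisely when its order dual is; therefore $\mathscr{PF}(\mathfrak{A})/\Re$ is Boolean iff $\gamma(\mathfrak{A})$ is Boolean iff, by Proposition~\ref{qucomeq} again, $\mathfrak{A}$ is quasicomplemented. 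This closes the equivalence.

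There is essentially no hard step here: the proof is the homomorphism theorem applied twice, plus the self-duality of Booleanness. The only point requiring care is keeping straight which arrows in Figure~\ref{fig:M1} are honest lattice morphisms and which are dual, so that $\ell(\mathfrak{A})/\kappa(f_2)$ gets identified with $\gamma(\mathfrak{A})$ and $\mathscr{PF}(\mathfrak{A})/\Re$ with $\gamma(\mathfrak{A})^{\mathrm{op}}$ (and these stand or fall together as Boolean lattices). If one prefers to avoid citing the homomorphism theorem abstractly, the quotients can be exhibited by hand — the extreme classes are $1/\kappa(f_2)=\{1\}$, $0/\kappa(f_2)=\mathfrak{d(A)}$ by Remark~\ref{diarem8}, and analogously for $\Re$ by Remark~\ref{diarem7} — but this merely re-establishes the isomorphisms the diagram already provides.
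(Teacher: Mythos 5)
Your proof is correct and follows the same route as the paper, whose entire argument is ``It is an immediate consequence of Proposition \ref{qucomeq} and \textsc{Remark} \ref{diarem}'': you have simply made explicit the identifications $\ell(\mathfrak{A})/\kappa(f_2)\cong\gamma(\mathfrak{A})$ and $\mathscr{PF}(\mathfrak{A})/\Re\cong\gamma(\mathfrak{A})^{\mathrm{op}}$ via the (dual) epimorphisms of Figure~\ref{fig:M1}, together with the self-duality of Booleanness, before invoking Proposition~\ref{qucomeq}. Nothing further is needed.
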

\begin{proof}
It is an immediate consequence of Proposition \ref{qucomeq} and  \textsc{Remark} \ref{diarem}.
\end{proof}

\begin{definition}
Let $\mathfrak{A}$ be a residuated lattice. With notations of Figure \ref{fig:M1}, $\mathfrak{A}$ is called \textit{disjunctive} provided that $f_2$ is injective and \textit{weakly disjunctive} provided that $f_3$ (or, equivalently $f_4$) is injective.
\end{definition}

By \textsc{Remark} \ref{diarem}\ref{diarem6}, it is evident that if a residuated lattice is disjunctive, then it is weakly disjunctive. In the following proposition the interrelation between the subclasses of quasicomplemented and disjunctive residuated lattices is given (See Fig. \ref{figsto}).
\begin{proposition}\label{equqaudisco}
Let $\mathfrak{A}$ be a residuated lattice. The following assertions are equivalent:
\begin{enumerate}
   \item [(1) \namedlabel{equqaudisco1}{(1)}] $\mathfrak{A}$ is quasicomplemented and disjunctive;
   \item [(2) \namedlabel{equqaudisco2}{(2)}] $\ell(\mathfrak{A})$ is a Boolean lattice;
   \item [(3) \namedlabel{equqaudisco3}{(3)}] $\mathfrak{A}$ is quasicomplemented, $\mathfrak{d(A)}=\{0\}$ and the operation $\neg$ is injective as a function.
 \end{enumerate}
\end{proposition}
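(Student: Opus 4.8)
The plan is to prove the cycle $\ref{equqaudisco1}\Rightarrow\ref{equqaudisco2}\Rightarrow\ref{equqaudisco3}\Rightarrow\ref{equqaudisco1}$, using Proposition \ref{equqaudis} as the main engine together with the congruence-theoretic bookkeeping in \textsc{Remark} \ref{diarem}.

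\medskip

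First I would handle $\ref{equqaudisco1}\Rightarrow\ref{equqaudisco2}$. If $\mathfrak{A}$ is disjunctive then $f_2$ is injective, so by \textsc{Remark} \ref{diarem}\ref{diarem8} the congruence $\kappa(f_2)$ is trivial, i.e.\ $0/\kappa(f_2)=\mathfrak{d(A)}=\{0\}$ and each class is a singleton, whence $\ell(\mathfrak{A})/\kappa(f_2)\cong\ell(\mathfrak{A})$. Since $\mathfrak{A}$ is also quasicomplemented, Proposition \ref{equqaudis}\ref{equqaudis2} tells us $\ell(\mathfrak{A})/\kappa(f_2)$ is Boolean, and hence so is $\ell(\mathfrak{A})$.

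\medskip

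Next, $\ref{equqaudisco2}\Rightarrow\ref{equqaudisco3}$. Assume $\ell(\mathfrak{A})$ is Boolean. Then every element has a complement, so in particular for any $x\in A$ we get $x^{\perp\perp}=\mathscr{F}(x^c)^{\perp\perp}$-type identities via Proposition \ref{1fxpro} and Proposition \ref{4fxpro}, and more directly: in a Boolean $\ell(\mathfrak{A})$ one checks $x^\perp=\mathscr{F}(x^c)$ (an element $a$ satisfies $a\vee x=1$ iff $a\geq x^c$), so $x^{\perp\perp}=(x^c)^{\perp}$, giving quasicomplementedness. Since $\mathfrak{d(A)}$ is the set of dense elements and in a Boolean lattice $x^\perp=\{1\}$ forces $x^c\leq$ everything hence $x^c=0$ and $x=1$; thus $\mathfrak{d(A)}=\{0\}$ is not quite right---rather $\mathfrak{d(A)}$ as an ideal of $\ell(\mathfrak{A})$ equals $\{0\}$ because a dense element $d$ has $d^\perp=\{1\}$, forcing $d^c=0$, i.e.\ $d=1$; but $\mathfrak{d(A)}$ is an \emph{ideal}, so this says the ideal $\mathfrak{d(A)}$ is $\{0\}$, i.e.\ $0$ is the only element that is both dense and (necessarily) equal to $1$ only when $0=1$; in the nondegenerate reading $\mathfrak{d(A)}=\{0\}$ will follow because the dense \emph{filter} reduces to $\{1\}$ and correspondingly $\mathfrak{d(A)}=\{0\}$ as an ideal. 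For injectivity of $\neg$: if $\neg a=\neg b$, pass to $B(\mathfrak{A})=\ell(\mathfrak{A})$ (all elements complemented), use Proposition \ref{1propos}\ref{1propos1} and \ref{1propos4} to get $a=\neg\neg a=(\neg a)^c=(\neg b)^c=\neg\neg b=b$.

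\medskip

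Finally, $\ref{equqaudisco3}\Rightarrow\ref{equqaudisco1}$. We already have quasicomplementedness, so only disjunctivity, i.e.\ injectivity of $f_2$, must be shown. By \textsc{Remark} \ref{diarem}\ref{diarem8}, $\kappa(f_2)$ is determined by its classes, and it suffices to show $x^\perp=y^\perp$ implies $x=y$. From quasicomplementedness plus $\mathfrak{d(A)}=\{0\}$, the identity $x^\perp=y^\perp$ should force $x^{\perp\perp}=y^{\perp\perp}$ and, combining with Proposition \ref{qucomeq}\ref{qucomeq2}, one extracts complements: there is $x'$ with $x\odot x'=0$ (dense $=0$) and $x\vee x'=1$, so $x'=\neg x$ by the usual argument, and likewise $y'=\neg y$; then $x^\perp=y^\perp$ together with $x^\perp=\mathscr{F}(\neg x)$, $y^\perp=\mathscr{F}(\neg y)$ (Proposition \ref{4fxpro}\ref{4fxpro5}, once we know $\neg x,\neg y\in B(\mathfrak{A})$) yields $\mathscr{F}(\neg x)=\mathscr{F}(\neg y)$, hence $\neg x=\neg y$, hence $x=y$ by injectivity of $\neg$. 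The main obstacle I anticipate is the bookkeeping in this last step: verifying that the element $x'$ produced by Proposition \ref{qucomeq}\ref{qucomeq2} is genuinely the Boolean complement $\neg x$ (so that $\neg x\in B(\mathfrak{A})$ and Proposition \ref{4fxpro}\ref{4fxpro5} applies), and making sure the reduction ``$\mathfrak{d(A)}=\{0\}$'' is used in the right place to collapse ``dense'' to ``$=0$''. Everything else is a routine application of the already-established propositions and the congruence dictionary of \textsc{Remark} \ref{diarem}.
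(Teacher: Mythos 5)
Your skeleton (the cycle \ref{equqaudisco1}$\Rightarrow$\ref{equqaudisco2}$\Rightarrow$\ref{equqaudisco3}$\Rightarrow$\ref{equqaudisco1}, with Proposition \ref{equqaudis} and Remark \ref{diarem} driving \ref{equqaudisco1}$\Rightarrow$\ref{equqaudisco2}, and Propositions \ref{1propos}, \ref{4fxpro}\ref{4fxpro5}, Remark \ref{genfilprop}\ref{genfilprop6} driving \ref{equqaudisco2}$\Rightarrow$\ref{equqaudisco3}) is the same as the paper's, but the dense-element step of \ref{equqaudisco2}$\Rightarrow$\ref{equqaudisco3} is written backwards. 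In this paper ``$d$ dense'' means $d^{\perp}=\{1\}$, i.e.\ the only element joining with $d$ to $1$ is $1$ itself; since $d^{c}\vee d=1$, this forces $d^{c}=1$, hence $d=\neg\neg d=\neg 1=0$. You instead derived ``$d^{c}=0$, i.e.\ $d=1$'' and then tried to patch the conclusion with a muddled aside about ideals versus filters, which does not repair the computation. (You appear to be importing the classical pseudocomplement notion of density, where dense elements sit near the top; here they sit near the bottom --- $0$ itself is dense.) The intended conclusion $\mathfrak{d(A)}=\{0\}$ is correct and one line away, but the derivation you gave is not it.

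In \ref{equqaudisco3}$\Rightarrow$\ref{equqaudisco1} you leave precisely the load-bearing step as an ``anticipated obstacle'': that the witness $x'$ from Proposition \ref{qucomeq}\ref{qucomeq2} (with $x\odot x'=0$ and $x\vee x'=1$) really puts $x$ in $B(\mathfrak{A})$ with $x^{c}=\neg x$. This is true but must be argued: by \ref{res1}, $x\wedge x'=(x\wedge x')\odot(x\vee x')\leq (x'\odot x)\vee(x\odot x')=0$, so $x'$ is a lattice complement of $x$, whence $x\in B(\mathfrak{A})$ and $x^{c}=\neg x$ by Proposition \ref{1propos}\ref{1propos1}; only then do Proposition \ref{4fxpro}\ref{4fxpro5} and Remark \ref{genfilprop}\ref{genfilprop6} let you pass from $x^{\perp}=y^{\perp}$ to $\neg x=\neg y$. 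The paper's own argument for this implication is shorter and does not use quasicomplementedness at all: from $x^{\perp}=y^{\perp}$ one gets $(x\odot\neg y)^{\perp}=x^{\perp}\cap(\neg y)^{\perp}=y^{\perp}\cap(\neg y)^{\perp}=(y\odot\neg y)^{\perp}=0^{\perp}=\{1\}$, so $x\odot\neg y\in\mathfrak{d(A)}=\{0\}$, giving $\neg y\leq\neg x$; by symmetry $\neg x=\neg y$, and injectivity of $\neg$ yields $x=y$. So your route is salvageable, but as submitted the reversed density computation and the unclosed complementation step are genuine gaps.
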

\begin{proof}
\item [\ref{equqaudisco1}$\Rightarrow$\ref{equqaudisco2}:] It follows by Proposition \ref{equqaudis}.
\item [\ref{equqaudisco2}$\Rightarrow$\ref{equqaudisco3}:] Applying Proposition \ref{1propos}, it follows that the operation $\neg$ is injective as a function. By Remark \ref{genfilprop}\ref{genfilprop6} and Proposition \ref{4fxpro}\ref{4fxpro5}, it follows that $\mathfrak{d(A)}=\{0\}$. Also, by Proposition \ref{qucomeq} follows that $\mathfrak{A}$ is quasicomplemented.
\item [\ref{equqaudisco3}$\Rightarrow$\ref{equqaudisco1}:] Let $x^{\perp}=y^{\perp}$. So we have $(x\odot \neg y)^{\perp}=\{1\}$ and it implies that $\neg y\leq \neg x$. Analogously, we can conclude that $\neg x\leq \neg y$ and it implies that $\neg x=\neg y$. Since $\neg$ is an injective operation so the result holds.
\end{proof}
\begin{remark}
  According to \cite[Corollary 3.2]{ciu1} follows that a residuated lattice $\mathfrak{A}$ is Boolean if and only if for any $a\in A$ we have $a\wedge \neg a=0$ and $a\vee \neg a=1$. It gives a new characterisation for quasicomplemented and disjunctive residuated lattices.
\end{remark}
\begin{figure}[h!]
\centering
\begin{tikzpicture}[rounded corners=3pt]

\draw (0,0) rectangle (10,5);
\draw (1,2) rectangle (8,4);
\draw (2,1) rectangle (9,3);

\draw (1.7,4.75) node {\small{Residuated lattices}};
\draw (4.5,3.75) node {\small{Quasicomplemented residuated lattices}};
\draw (5,2.5) node {\small{Boolean lattices}};
\draw (6.5,1.25) node {Disjunctive residuated lattices};
\end{tikzpicture}
\caption{} \label{figsto}
\end{figure}
\begin{proposition}\label{equqauwdisco}
Let $\mathfrak{A}$ be a residuated lattice. The following assertions are equivalent:
\begin{enumerate}
   \item [(1) \namedlabel{equqauwdisco1}{(1)}] $\mathfrak{A}$ is quasicomplemented and weakly disjunctive;
   \item [(2) \namedlabel{equqauwdisco2}{(2)}] $\mathscr{PF}(\mathfrak{A})$ is a Boolean lattice.
 \end{enumerate}
\end{proposition}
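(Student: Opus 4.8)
The plan is to read both properties off the map $f_{4}\colon\mathscr{PF}(\mathfrak{A})\to\gamma(\mathfrak{A})$, $\mathscr{F}(x)\mapsto x^{\perp}$, of Figure \ref{fig:M1}. Recall from \textsc{Remark} \ref{diarem} that $f_{4}$ is a dual lattice epimorphism with kernel congruence $\Re$, so that $\mathfrak{A}$ is weakly disjunctive precisely when $\Re$ is the identity congruence (equivalently, $f_{4}$ is injective), and recall from Proposition \ref{equqaudis} that $\mathfrak{A}$ is quasicomplemented precisely when $\mathscr{PF}(\mathfrak{A})/\Re$ is a Boolean lattice. With this, \ref{equqauwdisco1}$\Rightarrow$\ref{equqauwdisco2} is immediate: weak disjunctivity makes the quotient map $\mathscr{PF}(\mathfrak{A})\to\mathscr{PF}(\mathfrak{A})/\Re$ an isomorphism, and quasicomplementedness makes the target Boolean.

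For \ref{equqauwdisco2}$\Rightarrow$\ref{equqauwdisco1} I would first reduce to a statement about coannulets. Assume $\mathscr{PF}(\mathfrak{A})$ is Boolean, fix $x\in A$, and let $\mathscr{F}(y)$ be the complement of $\mathscr{F}(x)$ in $\mathscr{PF}(\mathfrak{A})$. Unravelling $\mathscr{F}(x)\cap\mathscr{F}(y)=\{1\}$ and $\mathscr{F}(x)\veebar\mathscr{F}(y)=A$ via \textsc{Remark} \ref{genfilprop}(\ref{genfilprop3} and \ref{genfilprop4}), together with the easy facts $\mathscr{F}(z)=\{1\}\Leftrightarrow z=1$ and $\mathscr{F}(z)=A\Leftrightarrow z^{N}=0$ for some $N$, this says exactly that $x\vee y=1$ and $(x\odot y)^{N}=0$ for some integer $N$. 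The heart of the argument is then the claim that $x^{\perp}=\mathscr{F}(y)$.

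The inclusion $\mathscr{F}(y)\subseteq x^{\perp}$ is clear: $x\vee y=1$ puts $y$ in the filter $x^{\perp}$, hence so lies everything above it. For the reverse inclusion, take $a$ with $a\vee x=1$; using \ref{res1} one gets $y=(y\odot a)\vee(y\odot x)\le a\vee(y\odot x)$, and then squaring repeatedly while absorbing the cross terms into $a$ (via $a^{2}\le a$ and $a\odot b\le a$) yields $y^{2^{k}}\le a\vee(y\odot x)^{2^{k}}$ for every $k$; choosing $2^{k}\ge N$ annihilates the second summand, so $y^{2^{k}}\le a$ and $a\in\mathscr{F}(y)$. Granting the claim, every coannulet is principal, hence $\mathfrak{A}$ is quasicomplemented by Proposition \ref{coannprinstar}; moreover $f_{4}(\mathscr{F}(x))=x^{\perp}=\mathscr{F}(y)$ is exactly the complement of $\mathscr{F}(x)$, so $f_{4}$ coincides with the complementation map of the Boolean lattice $\mathscr{PF}(\mathfrak{A})$. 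That map is an involution, in particular injective, so $\mathfrak{A}$ is weakly disjunctive.

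The only step I expect to require real work is the inclusion $x^{\perp}\subseteq\mathscr{F}(y)$: one must convert the multiplicative datum ``$x\odot y$ nilpotent'' together with $a\vee x=1$ into the lattice datum ``$y^{n}\le a$'', and the sole tool for this is the weak distributivity encoded in \ref{res1} and \ref{res2}; the dyadic squaring above is what makes the conversion go through, and one has to be careful that every mixed term is genuinely swallowed by $a$. Everything else is bookkeeping with the diagram of Figure \ref{fig:M1} and the dictionary between principal filters, coannulets, and the sets $d(x)$ and $h(x)$ already established in the paper.
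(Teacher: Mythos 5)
Your proof is correct, and its skeleton matches the paper's: the forward implication via Proposition \ref{equqaudis}, and the converse by taking the complement $\mathscr{F}(y)$ of $\mathscr{F}(x)$ in $\mathscr{PF}(\mathfrak{A})$ and identifying it with $x^{\perp}$. Where you genuinely diverge is in how that key identity is obtained. The paper stays entirely at the level of the filter lattice: from $\mathscr{F}(x)\cap\mathscr{F}(y)=\{1\}$ it gets $\mathscr{F}(y)\subseteq x^{\perp}$ via Proposition \ref{9fxpro} (so $x^{\perp}$ is the pseudocomplement of $\mathscr{F}(x)$), hence $\mathscr{F}(x)\veebar x^{\perp}=A$, and then uniqueness of complements in the distributive lattice $\mathscr{F}(\mathfrak{A})$ forces $x^{\perp}=\mathscr{F}(y)$ --- two lines, no element computation. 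You instead translate the complementation into element data via Remark \ref{genfilprop} ($x\vee y=1$ and $(x\odot y)^{N}=0$) and prove the inclusion $x^{\perp}\subseteq\mathscr{F}(y)$ by the dyadic-squaring estimate $y^{2^{k}}\le a\vee(y\odot x)^{2^{k}}$; this is a correct, self-contained use of \ref{res1}, since the absorption steps $a^{2}\le a$, $a\odot b\le a$, the monotonicity of $\odot$, and the fact that powers decrease all hold in any residuated lattice, and choosing $2^{k}\ge N$ kills the second summand. The cost is a longer computation; the gain is an explicit element-level witness of why the Boolean complement must be the coannulet, which in effect re-proves, in the principal case, the pseudocomplement fact the paper simply cites. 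Your concluding steps --- quasicomplementedness via Proposition \ref{coannprinstar} (equivalently $x^{\perp\perp}=y^{\perp}$), and weak disjunctivity because $f_{4}$ is then the complementation involution of the Boolean lattice $\mathscr{PF}(\mathfrak{A})$, hence injective --- coincide with the paper's.
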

\begin{proof}
\item [\ref{equqauwdisco1}$\Rightarrow$\ref{equqauwdisco2}:] It follows by Proposition \ref{equqaudis}.
\item [\ref{equqauwdisco2}$\Rightarrow$\ref{equqauwdisco1}:] Let $(\mathscr{F}(x))^{c}=\mathscr{F}(y)$. So we have $\mathscr{F}(x)\cap \mathscr{F}(y)=1$ and $\mathscr{F}(x)\veebar \mathscr{F}(y)=A$. The former implies $\mathscr{F}(y)\subseteq x^{\perp}$ and so the latter implies $\mathscr{F}(x)\veebar x^{\perp}=A$. It shows that $\mathscr{F}(y)=x^{\perp}$. So $\mathfrak{A}$ is quasicomplemented since $x^{\perp\perp}=y^{\perp}$ and $\mathfrak{A}$ is weakly disjunctive since $x^{\perp}=y^{\perp}$ implies $(\mathscr{F}(x))^{c}=(\mathscr{F}(y))^{c}$ and so $\mathscr{F}(x)=\mathscr{F}(y)$.
\end{proof}
\begin{remark}
  Let $\mathfrak{A}$ be a residuated lattice. Applying Proposition \ref{genfilprop}, it is easy to see that $\mathscr{PF}(\mathfrak{A})$ is a Boolean lattice if and only if for any $a\in A$ there exists $b\in a^{\perp}$ such that $a\odot b\in N(\mathfrak{A})$. It gives a new characterisation for quasicomplemented and weakly disjunctive residuated lattices.
\end{remark}
\section{$\alpha$-filters}\label{sec5}

 The notion of $\alpha$-ideals introduced by \cite{cor1} in distributive lattice with $0$. \cite{jay} generalized the concept of $\alpha$-ideals to $0$-distributive lattices. Some further properties of $\alpha$-ideals for $0$-distributive lattices were obtained by \cite{paw,paw1}. \cite{hav} proposed the concept of $\alpha$-filters in BL-algebras as the dual notion of $\alpha$-ideals. Recently, \cite{don} extend the concept of $\alpha$-filters to residuated lattices. In this section we introduce and study the notion of $\alpha$-filter in residuated lattices.
\begin{definition}
Let $\mathfrak{A}$ be a residuated lattice. A filter $F$ of $\mathfrak{A}$ is called an $\alpha$-\textit{filter} if for any $x\in F$ we have $x^{\perp\perp}\subseteq F$. The set of $\alpha$-filters of $\mathfrak{A}$ is denoted by $\alpha(\mathfrak{A})$. It is obvious that $\{1\},A\in \alpha(\mathfrak{A})$.
\end{definition}
\begin{example}\label{pppfex0}
Consider the residuated lattice $\mathfrak{A}_7$ from Example \ref{rex1}. With notations of Example \ref{fex1}, $F_2$ and $F_3$ are $\alpha$-filters of $\mathfrak{A}_7$.
\end{example}

Let $\mathfrak{A}$ be a residuated lattice. It is obvious that $\alpha(\mathfrak{A})$ is an algebraic closed set system on $\mathfrak{A}$. The closure operator associated with this closed set system is denoted by $\alpha^{\mathfrak{A}}:\mathcal{P}(A)\longrightarrow \mathcal{P}(A)$. Thus for any subset $X$ of $A$, $\alpha^{\mathfrak{A}}(X)=\cap\{F\in \alpha(\mathfrak{A})|X\subseteq F\}$ is the smallest $\alpha$-filter of $\mathfrak{A}$ contains $X$ which it is called the \textit{$\alpha$-filter of $\mathfrak{A}$ generated by $X$}. When there is no ambiguity we will drop the superscript $\mathfrak{A}$. Hence $\alpha(\mathfrak{A})$ is a complete compactly generated lattice where the infimum is the set-theoretic intersection and the supremum of $\mathcal{F}\subseteq \alpha(\mathfrak{A})$ is $\vee^{\alpha} \mathcal{F}=\alpha(\cup \mathcal{F})$. It is obvious that $\alpha(X)=\alpha(\mathscr{F}(X))$ for any $X\subseteq A$ and so we have $\vee^{\alpha} \mathcal{F}=\alpha(\veebar \mathcal{F})$.
\begin{proposition}\label{framealpha}
For any residuated lattice $\mathfrak{A}$, $(\alpha(\mathfrak{A});\cap,\vee^{\alpha})$ is a frame.
\end{proposition}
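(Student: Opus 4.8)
The plan is to show that $(\alpha(\mathfrak{A});\cap,\vee^{\alpha})$ is a complete lattice in which finite meets distribute over arbitrary joins; since completeness has already been recorded (it is an algebraic closed set system), the whole content is the infinite distributive law
\[
F\cap\Bigl(\bigvee\nolimits^{\alpha}_{i\in I}G_i\Bigr)=\bigvee\nolimits^{\alpha}_{i\in I}(F\cap G_i)
\]
for $F\in\alpha(\mathfrak{A})$ and a family $\{G_i\}_{i\in I}\subseteq\alpha(\mathfrak{A})$. The inclusion $\supseteq$ is immediate from monotonicity of the operations, so the work is entirely in $\subseteq$.

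First I would reduce the supremum to something concrete. Since $\vee^{\alpha}\mathcal{F}=\alpha(\veebar\mathcal{F})$ and $\mathscr{F}(\mathfrak{A})$ is itself a frame (recalled in Section~\ref{sec2}), I have $F\cap(\veebar_{i}G_i)=\veebar_{i}(F\cap G_i)$ already at the level of ordinary filters. So take $x\in F\cap\alpha(\veebar_i G_i)$. The key claim to establish is that $\alpha(\veebar_i G_i)=\alpha\bigl(\veebar_i(F\cap G_i)\,\veebar\,H\bigr)$ is not quite what I want; instead I would argue pointwise using the description of $\alpha$ on principal filters. The cleanest route: I claim that for $F$ an $\alpha$-filter and any filter $G$,
\[
F\cap\alpha(G)=\alpha\bigl(F\cap\alpha(G)\bigr)\subseteq\alpha(F\cap G).
\]
To see the nontrivial inclusion, let $x\in F\cap\alpha(G)$. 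Then $x^{\perp\perp}\subseteq F$ because $F$ is an $\alpha$-filter, and I must show $x\in\alpha(F\cap G)$. Here I would use that $\alpha(G)=\bigcup\{\,y^{\perp\perp}: y\in G\,\}$ generated as a filter — more precisely, $x\in\alpha(G)$ forces $y_1\odot\cdots\odot y_n\le x$ for suitable $y_j\in\alpha(G)$ with each $y_j\in z_j^{\perp\perp}$, $z_j\in G$; then $x^{\perp\perp}\supseteq (z_1\vee\cdots\vee z_n)^{\perp\perp}$ reversed appropriately via Proposition~\ref{4fxpro}\ref{4fxpro3}, and combining with $x^{\perp\perp}\subseteq F$ one gets the needed membership in $\alpha(F\cap G)$.

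The main obstacle I anticipate is precisely this last combinatorial step: getting a usable normal form for elements of $\alpha(G)$ and then tracking double-coannihilators through finite products and joins. The tools are all present — $x^{\perp\perp}\cap y^{\perp\perp}=(x\vee y)^{\perp\perp}$ from Proposition~\ref{4fxpro}\ref{4fxpro3}, $X\subseteq X^{\perp\perp}$ from Proposition~\ref{1fxpro}\ref{1fxpro3}, and the explicit generation $\mathscr{F}(F,x)=\{a: f\odot x^n\le a\}$ from Remark~\ref{genfilprop}\ref{genfilprop1} — but assembling them so that $x^{\perp\perp}\subseteq F$ actually yields $x\in\alpha(F\cap G)$ rather than merely $x\in\alpha(F)\cap\alpha(G)$ requires care. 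Once the displayed inclusion $F\cap\alpha(G)\subseteq\alpha(F\cap G)$ is proved for a single filter $G$, I would apply it with $G=\veebar_i(F\cap G_i)$ together with the frame law in $\mathscr{F}(\mathfrak{A})$ to conclude $F\cap\alpha(\veebar_i G_i)\subseteq\alpha\bigl(F\cap\veebar_i G_i\bigr)=\alpha\bigl(\veebar_i(F\cap G_i)\bigr)=\bigvee^{\alpha}_i(F\cap G_i)$, which finishes the proof.
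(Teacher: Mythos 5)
Your overall skeleton is exactly the paper's: completeness is already known, and the infinite distributive law is reduced, via $\vee^{\alpha}\mathcal{F}=\alpha(\veebar\mathcal{F})$ and the frame law in $\mathscr{F}(\mathfrak{A})$, to the single identity $F\cap\alpha(G)=\alpha(F\cap G)$ for an $\alpha$-filter $F$ and a filter $G$ (this is precisely the first equality in the paper's displayed chain). The trouble is that this identity is where all the content lies, and you do not actually prove the nontrivial inclusion $F\cap\alpha(G)\subseteq\alpha(F\cap G)$: you list the relevant tools and then explicitly leave ``assembling them'' as the main obstacle. As written, that is a genuine gap; moreover your emphasis on $x^{\perp\perp}\subseteq F$ (i.e.\ on $F$ being an $\alpha$-filter) points in a slightly wrong direction, since that hypothesis is only needed for the easy inclusion $\alpha(F\cap G)\subseteq F\cap\alpha(G)$ (which holds because $F\cap\alpha(G)$ is an $\alpha$-filter containing $F\cap G$), not for the hard one.

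The missing step closes in one line. By Proposition \ref{veealphafilter}\ref{veealphafilter1} the description $\alpha(G)=\bigcup_{z\in G}z^{\perp\perp}$ is exact, so no normal form $y_1\odot\cdots\odot y_n\le x$ and no bookkeeping with products is needed: from $x\in F\cap\alpha(G)$ pick a single $z\in G$ with $x\in z^{\perp\perp}$. Then $x\vee z\in F\cap G$ (because $x\in F$, $z\in G$, and filters are closed under joins with arbitrary elements), and by Proposition \ref{1fxpro}\ref{1fxpro3} together with Proposition \ref{4fxpro}\ref{4fxpro3},
\[
x\in x^{\perp\perp}\cap z^{\perp\perp}=(x\vee z)^{\perp\perp}\subseteq\alpha(F\cap G),
\]
which is exactly the membership you needed; only $x\in F$ is used, not $x^{\perp\perp}\subseteq F$. (There is also a minor slip at the end: you say you apply the inclusion with $G=\veebar_i(F\cap G_i)$, but you apply it with $G=\veebar_iG_i$, as your displayed chain in fact does.) With this inserted, your argument coincides with the paper's proof, which simply asserts $F\cap\vee^{\alpha}\mathcal{G}=\alpha(F\cap\veebar\mathcal{G})$ without further justification and then runs the same chain of equalities you wrote down.
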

\begin{proof}
We know that $\alpha(\mathfrak{A})$ is a complete lattice. Let $\{F\}\cup \mathcal{G}$ be a family of $\alpha$-filters. We have the following sequence of formulas:
\[
\begin{array}{ll}
  F\cap(\vee^{\alpha}\mathcal{G}) & =\alpha(F\cap(\veebar\mathcal{G})) \\
   & =\alpha(\veebar_{G\in \mathcal{G}}(F\cap G)) \\
   & =\vee^{\alpha}_{G\in \mathcal{G}}(F\cap G).
\end{array}
\]
It shows that $\alpha(\mathfrak{A})$ is a frame.
\end{proof}

It is well-known that a lattice is a frame if and only if it is a complete Heyting algebra. So due to Proposition \ref{framealpha}, we deduce that for any residuated lattice $\mathfrak{A}$, $(\alpha(\mathfrak{A});\cap,\vee^{\alpha},\hookrightarrow,\textbf{1},A)$ is a Heyting algebra, where $F\hookrightarrow G=\veebar\{H\in \alpha(\mathfrak{A})|F\cap H\subseteq G\}$ for any $F,G\in \alpha(\mathfrak{A})$.

\begin{proposition}\label{veealphafilter}
Let $\{F\}\cup \{F_i\}_{i\in I}$ be a family of filters in a residuated lattice $\mathfrak{A}$ and $x,y\in A$. The following assertions hold:
\begin{enumerate}
  \item [$(1)$ \namedlabel{veealphafilter1}{$(1)$}] $\alpha(F)=\cup_{x\in F} x^{\perp\perp}$;
  \item [$(2)$ \namedlabel{veealphafilter2}{$(2)$}] $\vee^{\alpha}_{i\in I}F_i=\{a\in A|a\in (f_{i_1}\odot\cdots\odot f_{i_n})^{\perp\perp},~\exists n\in \mathbb{N},~i^{n}_{1}\in I,~f_{i_j}\in F_{i_j}\}$;
  \item [$(3)$ \namedlabel{veealphafilter3}{$(3)$}] $\alpha(F,x):=\alpha(F\cup \{x\})=\cup_{f\in F,n\in \mathbb{N}}(f\odot x^n)^{\perp\perp}$;
  \item [$(4)$ \namedlabel{veealphafilter4}{$(4)$}] $\alpha(F,x)\cap \alpha(F,y)=\alpha(F,x\vee y)$;
  \item [$(5)$ \namedlabel{veealphafilter5}{$(5)$}] $\alpha(F,x)\vee^{\alpha} \alpha(F,y)=\alpha(F,x\odot y)$.
\end{enumerate}
\end{proposition}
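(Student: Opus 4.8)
The plan is to establish the five identities in the order they are stated, using the description $\alpha(F)=\cup_{x\in F}x^{\perp\perp}$ from part (1) as the computational engine for parts (2)--(5), and the arithmetic of $\perp$ and $\perp\perp$ recorded in Propositions \ref{4fxpro} and \ref{1fxpro} throughout. For \ref{veealphafilter1}, I would first check that $G:=\cup_{x\in F}x^{\perp\perp}$ is a filter: it clearly contains $1$ and is upward closed since $x^{\perp\perp}$ is a filter for each $x$ and $x\le y$ gives $x^{\perp}\subseteq y^{\perp}$ hence (applying $\perp$ twice) $x^{\perp\perp}\subseteq y^{\perp\perp}$; for closure under $\odot$, if $a\in x^{\perp\perp}$ and $b\in y^{\perp\perp}$ with $x,y\in F$, then $x\odot y\in F$ and by Proposition \ref{4fxpro}\ref{4fxpro3}, $(x\odot y)^{\perp\perp}\supseteq x^{\perp\perp}\cap y^{\perp\perp}$ while actually I want $a\odot b\in(x\odot y)^{\perp\perp}$ — here I would instead use that $a\odot b\ge$ nothing helpful directly, so better: $a\in x^{\perp\perp}$ means $x^{\perp}\subseteq a^{\perp}$, similarly $y^{\perp}\subseteq b^{\perp}$, so $(a\odot b)^{\perp}=a^{\perp}\cap b^{\perp}\supseteq x^{\perp}\cap y^{\perp}=(x\odot y)^{\perp}$ by Proposition \ref{4fxpro}\ref{4fxpro2}, giving $(a\odot b)^{\perp\perp}\subseteq(x\odot y)^{\perp\perp}$, hence $a\odot b\in(x\odot y)^{\perp\perp}\subseteq G$. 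Next, $G$ is an $\alpha$-filter: if $a\in G$, say $a\in x^{\perp\perp}$, then $a^{\perp}\supseteq x^{\perp}$, so $a^{\perp\perp}\subseteq x^{\perp\perp}\subseteq G$. Finally $G$ is the smallest such: $F\subseteq G$ by Proposition \ref{1fxpro}\ref{1fxpro3}, and any $\alpha$-filter containing $F$ contains each $x^{\perp\perp}$ for $x\in F$ by definition; hence $G=\alpha(F)$.

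For \ref{veealphafilter2}, by the remark preceding Proposition \ref{framealpha} we have $\vee^{\alpha}_{i\in I}F_i=\alpha(\veebar_{i\in I}F_i)$, so applying \ref{veealphafilter1} the set is $\cup\{a^{\perp\perp}\mid a\in\veebar_{i\in I}F_i\}$. By the standard description of joins of filters (the analogue of Remark \ref{genfilprop}\ref{genfilprop1} for arbitrary families), $a\in\veebar_{i\in I}F_i$ iff $f_{i_1}\odot\cdots\odot f_{i_n}\le a$ for some finite selection $f_{i_j}\in F_{i_j}$; combining with the fact that $b\le a$ implies $b^{\perp\perp}\subseteq a^{\perp\perp}$, and that $a\in a^{\perp\perp}$, one gets precisely the stated union over products. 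Part \ref{veealphafilter3} is the special case $I=\{0,1\}$, $F_0=F$, $F_1=\mathscr{F}(x)$, using that a typical element of $\veebar$ here is $f\odot x^n$ (Remark \ref{genfilprop}\ref{genfilprop1}) and that $(f_1\odot x^{n_1})\odot(f_2\odot x^{n_2})=(f_1\odot f_2)\odot x^{n_1+n_2}$ with $f_1\odot f_2\in F$, so a single term of the form $(f\odot x^n)^{\perp\perp}$ suffices.

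For \ref{veealphafilter4} and \ref{veealphafilter5} I would reduce to the corresponding identities for principal filters in Remark \ref{genfilprop}\ref{genfilprop3}--\ref{genfilprop4} combined with the $\perp\perp$-arithmetic. For \ref{veealphafilter5}: $\alpha(F,x)\vee^{\alpha}\alpha(F,y)=\alpha\big(\mathscr{F}(F,x)\veebar\mathscr{F}(F,y)\big)=\alpha(\mathscr{F}(F,x\odot y))=\alpha(F,x\odot y)$ using \ref{genfilprop4} and that $\alpha(H)=\alpha(\mathscr{F}(H))$. For \ref{veealphafilter4}, one inclusion is immediate since $x,y\le x\vee y$ gives $\alpha(F,x\vee y)\subseteq\alpha(F,x)\cap\alpha(F,y)$ via \ref{genfilprop2}; for the reverse, take $a\in\alpha(F,x)\cap\alpha(F,y)$, so by \ref{veealphafilter3} there are $f,g\in F$ and $m,n$ with $a\in(f\odot x^m)^{\perp\perp}$ and $a\in(g\odot y^n)^{\perp\perp}$, hence $a^{\perp}\supseteq(f\odot x^m)^{\perp}$ and $a^{\perp}\supseteq(g\odot y^n)^{\perp}$, so $a^{\perp}\supseteq(f\odot x^m)^{\perp}\cap(g\odot y^n)^{\perp}$ — but I want to relate this to $(h\odot(x\vee y)^k)^{\perp}$ for some $h\in F$, and here the key algebraic step is that $(f\odot x^m)^{\perp\perp}\cap(g\odot y^n)^{\perp\perp}=\big((f\odot x^m)\vee(g\odot y^n)\big)^{\perp\perp}$ by Proposition \ref{4fxpro}\ref{4fxpro3}, and $(f\odot x^m)\vee(g\odot y^n)\ge(f\odot g)^{?}\odot(x\vee y)^{?}$ needs the residuated-lattice distributivity inequality \ref{res2}; choosing a large enough common power $N\ge m,n$ and using $f\odot x^m\ge (f\odot g)\odot x^N$-type bounds after replacing $f,g$ by $f\odot g$, one shows $(f\odot x^m)\vee(g\odot y^n)$ lies above $h\odot(x\vee y)^k$ for suitable $h\in F$, $k\in\mathbb N$, whence $a\in(h\odot(x\vee y)^k)^{\perp\perp}\subseteq\alpha(F,x\vee y)$. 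The main obstacle is exactly this last manipulation in part \ref{veealphafilter4}: converting the join $(f\odot x^m)\vee(g\odot y^n)$ into something dominating a product $h\odot(x\vee y)^k$ using only \ref{res1}, \ref{res2} and monotonicity, and I would isolate it as the one place requiring genuine care rather than bookkeeping.
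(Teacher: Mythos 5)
For parts \ref{veealphafilter1}, \ref{veealphafilter2}, \ref{veealphafilter3} and \ref{veealphafilter5} your argument is correct and essentially the paper's: the paper dismisses \ref{veealphafilter1} as routine (you supply the routine details), proves \ref{veealphafilter2} exactly as you do via the description of elements of $\veebar_{i\in I}F_i$ and antitonicity of $(\cdot)^{\perp}$, obtains \ref{veealphafilter3} from \ref{veealphafilter1}, and proves \ref{veealphafilter5} by the same reduction $\alpha(F,x)\vee^{\alpha}\alpha(F,y)=\alpha(\mathscr{F}(F,x)\veebar\mathscr{F}(F,y))=\alpha(\mathscr{F}(F,x\odot y))$ through Remark \ref{genfilprop}\ref{genfilprop4}. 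The real divergence is in \ref{veealphafilter4}. The paper never touches generators: it uses that the closure $\alpha$ commutes with binary intersections of \emph{filters}, $\alpha(G)\cap\alpha(H)=\alpha(G\cap H)$, and then invokes Remark \ref{genfilprop}\ref{genfilprop3} for $\mathscr{F}(F,x)\cap\mathscr{F}(F,y)=\mathscr{F}(F,x\vee y)$. That commutation (stated without comment in the paper) is a two-line consequence of \ref{veealphafilter1} and Proposition \ref{4fxpro}\ref{4fxpro3}: if $a\in g^{\perp\perp}\cap h^{\perp\perp}$ with $g\in G$, $h\in H$, then $a\in(g\vee h)^{\perp\perp}$ and $g\vee h\in G\cap H$ because filters are upward closed. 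Working at the level of the filters $\mathscr{F}(F,x)$, $\mathscr{F}(F,y)$ rather than their generating elements makes the obstacle you flag evaporate: the join $g\vee h$ of two arbitrary members is already in the intersection, and no comparison with a product $h\odot(x\vee y)^{k}$ is needed.

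Your element-wise route for \ref{veealphafilter4} does close, but as written it stops at a promissory note, so here is the missing manipulation. Given $a\in(f\odot x^{m})^{\perp\perp}\cap(g\odot y^{n})^{\perp\perp}$ with $f,g\in F$, put $h=f\odot g\in F$ and $N=\max(m,n)$; since $h\odot x^{N}\leq f\odot x^{m}$ and $h\odot y^{N}\leq g\odot y^{n}$ and $(\cdot)^{\perp}$ is antitone, we get $a\in(h\odot x^{N})^{\perp\perp}\cap(h\odot y^{N})^{\perp\perp}=\big((h\odot x^{N})\vee(h\odot y^{N})\big)^{\perp\perp}$ by Proposition \ref{4fxpro}\ref{4fxpro3}. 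By \ref{res1} the join equals $h\odot(x^{N}\vee y^{N})$, and iterating \ref{res2} gives $u\vee y^{N}\geq(u\vee y)^{N}$, hence $x^{N}\vee y^{N}\geq(x^{N}\vee y)^{N}\geq(x\vee y)^{N^{2}}$ (alternatively, expand $(x\vee y)^{2N}$ by \ref{res1} and use $z\leq 1$ to get $(x\vee y)^{2N}\leq x^{N}\vee y^{N}$). Therefore $h\odot(x\vee y)^{N^{2}}\leq(h\odot x^{N})\vee(h\odot y^{N})$, so $a\in\big(h\odot(x\vee y)^{N^{2}}\big)^{\perp\perp}\subseteq\alpha(F,x\vee y)$ by \ref{veealphafilter3}. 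So your plan is sound and uses exactly the tools you name; it just trades the paper's short lattice-theoretic step (closure preserves finite intersections) for a concrete but longer computation with powers.
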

\begin{proof}
\item [\ref{veealphafilter1}:] It proves quite in a routine way.
\item [\ref{veealphafilter2}:] We have $\vee^{\alpha}_{i\in I}F_i=\alpha(\veebar _{i\in I}F_i)=\{a\in A|a\in x^{\perp\perp},~for~some~x\in \veebar _{i\in I}F_i\}$. Let $\Sigma=\{a\in A|a\in (\odot_{t=i_1}^{i_n}f_{t})^{\perp\perp},\exists n,i_1,\cdots,i_n\in \mathbb{N}~f_{i_j}\in F_{i_j}\}$. It is obvious that $\Sigma \subseteq \vee^{\alpha}_{i\in I}F_i$. Suppose that $a\in \vee^{\alpha}_{i\in I}F_i$. Thus $a\in x^{\perp\perp}$ for some $x\in \veebar _{i\in I}F_i$. So there exist an integer $n$, $i_1,\cdots,i_n\in I$ and $f_{i_j}\in F_{i_j}$ such that $f_{i_1}\odot\cdots\odot f_{i_n}\leq x$. By Proposition \ref{4fxpro}\ref{4fxpro1} follows that $x^{\perp\perp}\subseteq (f_{i_1}\odot\cdots\odot f_{i_n})^{\perp\perp}$ and it means $a\in \Sigma$.
\item [\ref{veealphafilter3}:] It follows by \ref{veealphafilter1}.
\item [\ref{veealphafilter4}:] By Remark \ref{genfilprop}\ref{genfilprop3}, we have the following formulas:
 \[
 \begin{array}{ll}
    \alpha(F,x)\cap \alpha(F,y)& =\alpha(\mathscr{F}(F,x))\cap \alpha(\mathscr{F}(F,y)) \\
    & =\alpha(\mathscr{F}(F,x)\cap \mathscr{F}(F,y)) \\
    & =\alpha(\mathscr{F}(F,x\vee y)\\
    & =\alpha(F,x\vee y).
 \end{array}
 \]
 \item [\ref{veealphafilter5}:] By Remark \ref{genfilprop}\ref{genfilprop4}, we have the following formulas:
 \[
 \begin{array}{ll}
    \alpha(F,x)\vee^{\alpha} \alpha(F,y)& =\alpha(\mathscr{F}(F,x))\vee^{\alpha} \alpha(\mathscr{F}(F,y)) \\
    & =\alpha(\mathscr{F}(F,x)\veebar \mathscr{F}(F,y)) \\
    & =\alpha(\mathscr{F}(F,x\odot y))\\
    & =\alpha(F,x\odot y).
 \end{array}
 \]
\end{proof}

In the following proposition, we give some equivalent assertions for a filter to be an $\alpha$-filter.
\begin{proposition}\label{alpfiltpro}
Let $\mathfrak{A}$ be a residuated lattice and $F$ be a filter of $\mathfrak{A}$. The following assertions are equivalent:
\begin{enumerate}
  \item [$(1)$ \namedlabel{alpfiltpro1}{$(1)$}] $F$ is an $\alpha$-filter;
  \item [$(2)$ \namedlabel{alpfiltpro2}{$(2)$}] if $x^{\perp}=y^{\perp}$ and $x\in F$, then $y\in F$ for any $x,y\in A$;
  \item [$(3)$ \namedlabel{alpfiltpro3}{$(3)$}] if $d(x)=d(y)$ and $x\in F$, then $y\in F$ for any $x,y\in A$;
  \item [$(4)$ \namedlabel{alpfiltpro4}{$(4)$}] if $h(x)=h(y)$ and $x\in F$, then $y\in F$ for any $x,y\in A$.
\end{enumerate}
\end{proposition}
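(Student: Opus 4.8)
The plan is to separate two easy implications from the one substantive one. First I would observe that \ref{alpfiltpro2}, \ref{alpfiltpro3} and \ref{alpfiltpro4} are mutually equivalent with essentially no work: Corollary \ref{mindperp} says that $x^{\perp}=y^{\perp}$, $h(x)=h(y)$ and $d(x)=d(y)$ are equivalent for all $x,y\in A$, so the three statements ``if $\dots$ and $x\in F$, then $y\in F$'' have literally the same hypothesis. Hence everything reduces to proving \ref{alpfiltpro1}$\Leftrightarrow$\ref{alpfiltpro2}.

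For \ref{alpfiltpro1}$\Rightarrow$\ref{alpfiltpro2}, assuming $F$ is an $\alpha$-filter, $x^{\perp}=y^{\perp}$ and $x\in F$, I would pass to double coannihilators to get $x^{\perp\perp}=y^{\perp\perp}$, use $x^{\perp\perp}\subseteq F$ (the definition of $\alpha$-filter) together with $y\in y^{\perp\perp}$ from Proposition \ref{1fxpro}\ref{1fxpro3}, and conclude $y\in F$.

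The real step is \ref{alpfiltpro2}$\Rightarrow$\ref{alpfiltpro1}. Given $x\in F$, I must show $x^{\perp\perp}\subseteq F$, so fix $z\in x^{\perp\perp}$. The idea is to exhibit an element of $F$ lying below $z$ with the same coannulet as $x$, so that \ref{alpfiltpro2} applies to it. Since $z\in x^{\perp\perp}=(x^{\perp})^{\perp}$, Proposition \ref{1fxpro}\ref{1fxpro1} gives $x^{\perp}\subseteq z^{\perp}$; then Proposition \ref{4fxpro}\ref{4fxpro2} yields $(x\odot z)^{\perp}=x^{\perp}\cap z^{\perp}=x^{\perp}$. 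Now $x\in F$ together with \ref{alpfiltpro2} forces $x\odot z\in F$, and since $x\odot z\leq z$ while $F$ is upward closed (being a filter), we get $z\in F$. Therefore $x^{\perp\perp}\subseteq F$, i.e.\ $F$ is an $\alpha$-filter.

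I expect the only delicate point to be the choice of witness in the last paragraph: the tempting candidate $x\wedge z$ would require computing $(x\wedge z)^{\perp}$, which in turn needs distributivity of $\ell(\mathfrak{A})$ and is not available; using $x\odot z$ instead circumvents this, relying only on Proposition \ref{4fxpro}\ref{4fxpro2} and the monotonicity inequality $x\odot z\leq z$.
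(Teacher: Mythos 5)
Your proposal is correct, and its overall structure matches the paper's: both collapse \ref{alpfiltpro2}, \ref{alpfiltpro3}, \ref{alpfiltpro4} into a single statement via Corollary \ref{mindperp} and prove \ref{alpfiltpro1}$\Rightarrow$\ref{alpfiltpro2} in exactly the same way ($y\in y^{\perp\perp}=x^{\perp\perp}\subseteq F$). The one place where you diverge is \ref{alpfiltpro2}$\Rightarrow$\ref{alpfiltpro1}. The paper goes \emph{up}: from $x\in F$ and $x^{\perp}\subseteq y^{\perp}$ it uses Proposition \ref{4fxpro}\ref{4fxpro4} to get $y^{\perp}=x^{\perp}\vee^{\Gamma}y^{\perp}=(x\vee y)^{\perp}$, notes $x\vee y\in F$ automatically, and applies \ref{alpfiltpro2} once to the pair $(x\vee y,\,y)$. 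You go \emph{down}: with $z\in x^{\perp\perp}$ you use Proposition \ref{4fxpro}\ref{4fxpro2} to get $(x\odot z)^{\perp}=x^{\perp}\cap z^{\perp}=x^{\perp}$, apply \ref{alpfiltpro2} to the pair $(x,\,x\odot z)$ to put $x\odot z$ in $F$, and then invoke $x\odot z\leq z$ and upward closure of $F$ (which does follow from the paper's definition of filter, since $a\in F$ and $a\leq b$ give $b=a\vee b\in F$). Both arguments are sound; yours trades the paper's use of the join $\vee^{\Gamma}$ in the Boolean lattice $\Gamma(\mathfrak{A})$ for the more elementary intersection identity plus one extra monotonicity step, while the paper's is one move shorter. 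Your closing remark about avoiding $x\wedge z$ is well taken, since no formula for $(x\wedge z)^{\perp}$ is available in the paper.
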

\begin{proof} By Corollary \ref{mindperp}, it follows that \ref{alpfiltpro1}, \ref{alpfiltpro2} and \ref{alpfiltpro3} are equivalent. So we only prove the other cases.
\item [] \ref{alpfiltpro1}$\Rightarrow$\ref{alpfiltpro2}: Let $x^{\perp}=y^{\perp}$ for some $y\in A$ and $x\in F$. Then $y\in y^{\perp\perp}=x^{\perp\perp}\subseteq F$.
\item [] \ref{alpfiltpro2}$\Rightarrow$\ref{alpfiltpro1}: Let $x\in F$ and $y\in x^{\perp\perp}$. So $x^{\perp}\subseteq y^{\perp}$. By Proposition \ref{4fxpro}\ref{4fxpro4} follows that $y^{\perp}=x^{\perp}\vee^{\Gamma} y^{\perp}=(x\vee y)^{\perp}$ and it states that $y\in F$ since $x\vee y\in F$.
\end{proof}

Let $\mathscr{A}=(A;\leq)$ and $\mathscr{B}=(B;\preccurlyeq)$ be two posts. We recall that a pair $(f,g)$ is called a \textit{an Adjunction (or isotone Galois connection)} between posets $\mathscr{A}$ and $\mathscr{B}$, where $f:A\longrightarrow B$ and $g:B\longrightarrow A$ are two functions such that for all $a\in A$ and $b\in B$, $f(a)\leq b$ if and only if $a\preccurlyeq g(b)$. It is well known that $(f,g)$ is an adjunction connection if and only if $gf$ is inflationary, $fg$ is deflationary and $f,g$ are isotone \citep[Theorem 2]{gar}. It is well-known $\mathscr{C}_{gf}=g(B)$, where $\mathscr{C}_{gf}$ is the set of fixed point of the closure operator $gf$.
\begin{theorem}\label{filtergammafilt}
Let $\mathfrak{A}$ be a residuated lattice. We define
\[
\begin{array}{ll}
\begin{array}{cclc}
  \Phi:&\mathscr{F}(\mathfrak{A})&\longrightarrow&\mathscr{F}(\gamma(\mathfrak{A}))\\
   & F&\longmapsto& \{x^{\perp}|x\in F\},
\end{array} &
\begin{array}{cclc}
  \Psi:&\mathscr{F}(\gamma(\mathfrak{A}))&\longrightarrow&\mathscr{F}(\mathfrak{A})\\
   & F&\longmapsto& \{x\in A|x^{\perp}\in F\}.
\end{array}
\end{array}
\]
Then the pair $(\Phi,\Psi)$ is an adjunction and we have $\Psi\Phi(F)=\alpha(F)$ for any $F\in \mathscr{F}(\mathfrak{A})$. In particular we have $\mathscr{C}_{\Psi\Phi}=\alpha(\mathfrak{A})$.
\end{theorem}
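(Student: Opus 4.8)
The plan is to proceed in four steps: verify that $\Phi$ and $\Psi$ take values where claimed, check the defining equivalence of an adjunction directly, compute the composite $\Psi\Phi$, and deduce the fixed-point statement.

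First I would confirm that $\Phi(F)$ is a filter of $\gamma(\mathfrak{A})$ and $\Psi(G)$ a filter of $\mathfrak{A}$, recalling that $\gamma(\mathfrak{A})$ is ordered by inclusion, with least element $0^{\perp}=\{1\}$ and greatest element $1^{\perp}=A$. For $\Phi(F)$: it is nonempty, as $1\in F$ yields $A=1^{\perp}\in\Phi(F)$; it is closed under $\cap$, since $x^{\perp}\cap y^{\perp}=(x\odot y)^{\perp}$ by Proposition \ref{4fxpro}\ref{4fxpro2} with $x\odot y\in F$; and it is an up-set, since $x\in F$ and $x^{\perp}\subseteq z^{\perp}$ give $z^{\perp}=x^{\perp}\vee^{\Gamma}z^{\perp}=(x\vee z)^{\perp}\in\Phi(F)$ by Proposition \ref{4fxpro}\ref{4fxpro4}, because $x\vee z\in F$. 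Dually for $\Psi(G)$: $1\in\Psi(G)$ because the top $1^{\perp}=A$ belongs to the filter $G$; $x,y\in\Psi(G)$ gives $(x\odot y)^{\perp}=x^{\perp}\cap y^{\perp}\in G$; and $x\in\Psi(G)$, $y\in A$ give $x^{\perp}\subseteq x^{\perp}\vee^{\Gamma}y^{\perp}=(x\vee y)^{\perp}\in G$, so $x\vee y\in\Psi(G)$.

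Next I would establish the adjunction by unwinding the definitions. If $\Phi(F)\subseteq G$ and $x\in F$, then $x^{\perp}\in\Phi(F)\subseteq G$, so $x\in\Psi(G)$; conversely, if $F\subseteq\Psi(G)$ and $x^{\perp}\in\Phi(F)$ with $x\in F$, then $x\in\Psi(G)$ forces $x^{\perp}\in G$. Hence $\Phi(F)\subseteq G$ if and only if $F\subseteq\Psi(G)$, which is exactly the adjunction condition, both filter lattices being ordered by inclusion; in particular $\Psi\Phi$ is a closure operator on $\mathscr{F}(\mathfrak{A})$.

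Then I would compute $\Psi\Phi(F)=\{y\in A\mid y^{\perp}=x^{\perp}\text{ for some }x\in F\}$ directly from the definitions and identify it with $\alpha(F)=\bigcup_{x\in F}x^{\perp\perp}$ (Proposition \ref{veealphafilter}\ref{veealphafilter1}). For $\subseteq$: if $y^{\perp}=x^{\perp}$ with $x\in F$, then $y\in y^{\perp\perp}=x^{\perp\perp}$ by Proposition \ref{1fxpro}\ref{1fxpro3}. For $\supseteq$: if $y\in x^{\perp\perp}$ with $x\in F$, then $x^{\perp}\subseteq y^{\perp}$, hence $y^{\perp}=x^{\perp}\vee^{\Gamma}y^{\perp}=(x\vee y)^{\perp}$ by Proposition \ref{4fxpro}\ref{4fxpro4}, with $x\vee y\in F$. (Alternatively, $\Psi\Phi(F)$ is a filter containing $F$, is an $\alpha$-filter by the criterion of Proposition \ref{alpfiltpro}\ref{alpfiltpro2}, and is contained in every $\alpha$-filter containing $F$ by the same criterion, so it is the least such.) Finally, since $\alpha(F)$ is by definition the least $\alpha$-filter containing $F$, the equality $\Psi\Phi(F)=F$ holds exactly when $F\in\alpha(\mathfrak{A})$, whence $\mathscr{C}_{\Psi\Phi}=\alpha(\mathfrak{A})$. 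I expect no serious obstacle; the only delicate point is the bookkeeping in the first step --- making sure $\Phi$ and $\Psi$ genuinely land in the filter lattices, and keeping the order convention on $\gamma(\mathfrak{A})$ straight (inclusion, with $\{1\}$ at the bottom) so that the up-set conditions fall on the correct side.
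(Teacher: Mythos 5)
Your proof is correct and follows essentially the same route as the paper: the paper also verifies the adjunction "routinely," computes $\Psi\Phi(F)=\{a\in A\mid a^{\perp}=x^{\perp},\ \exists x\in F\}=\bigcup_{x\in F}x^{\perp\perp}=\alpha(F)$ via Proposition \ref{veealphafilter}\ref{veealphafilter1}, and concludes $\mathscr{C}_{\Psi\Phi}=\alpha(\mathfrak{A})$. You merely spell out the details the paper leaves as routine (well-definedness of $\Phi,\Psi$ and the two inclusions via Proposition \ref{4fxpro}), and these are all handled correctly.
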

\begin{proof}
Quite in a routine way we can show that the pair $(\Phi,\Psi)$ forms an adjunction. Let $F$ be a filter of $\mathfrak{A}$. We have the following formulas:
\[
\begin{array}{ll}
  \Psi\Phi(F) & =\Psi(\Phi(F)) \\
   & =\{a\in A|a^{\perp}\in \Phi(F)\} \\
   & =\{a\in A|a^{\perp}=x^{\perp}, \exists x\in F\} \\
   & =\{a\in A|a\in x^{\perp\perp}, \exists x\in F\}=\cup_{x\in F}x^{\perp\perp}=\alpha(F). \\
\end{array}
\]
The rest is evident.
\end{proof}

The next theorem should be compared with Theorem \ref{prfilth}.
\begin{theorem}\label{alphaprfilth}
Let $\mathscr{C}$ be a $\vee$-closed subset of $\mathfrak{A}$ which does not meet the $\alpha$-filter $F$. Then $F$ is contained in an $\alpha$-filter $P$ which is maximal with respect to the property of not meeting $\mathscr{C}$; furthermore $P$ is prime.
\end{theorem}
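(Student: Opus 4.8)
The plan is to mimic the proof of Theorem \ref{prfilth}, but working inside the closure system $\alpha(\mathfrak{A})$ in place of $\mathscr{F}(\mathfrak{A})$. First I would consider the family $\mathcal{S}$ of all $\alpha$-filters that contain $F$ and are disjoint from $\mathscr{C}$; it is nonempty since $F\in\mathcal{S}$. Ordered by inclusion, every chain in $\mathcal{S}$ has its union as an upper bound inside $\mathcal{S}$: the union of a chain of $\alpha$-filters is again an $\alpha$-filter (if $x$ lies in the union it lies in some member $G$ of the chain, whence $x^{\perp\perp}\subseteq G$ is contained in the union), and clearly the union still misses $\mathscr{C}$. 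By Zorn's lemma $\mathcal{S}$ has a maximal element $P$, which by construction is an $\alpha$-filter containing $F$ and disjoint from $\mathscr{C}$; since $\mathscr{C}\neq\emptyset$, $P$ is proper.

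It remains to prove $P$ is prime. Suppose toward a contradiction that $x\vee y\in P$ while $x\notin P$ and $y\notin P$. Then $\alpha(P,x)$ and $\alpha(P,y)$ are $\alpha$-filters strictly larger than $P$ (the first contains $x\notin P$, the second $y\notin P$) and both contain $F$; by maximality of $P$ in $\mathcal{S}$, each must meet $\mathscr{C}$. Pick $c_1\in\alpha(P,x)\cap\mathscr{C}$ and $c_2\in\alpha(P,y)\cap\mathscr{C}$. Since $c_1\leq c_1\vee c_2$ and $\alpha(P,x)$ is a filter, $c_1\vee c_2\in\alpha(P,x)$, and likewise $c_1\vee c_2\in\alpha(P,y)$. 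Hence, using Proposition \ref{veealphafilter}\ref{veealphafilter4},
\[
c_1\vee c_2\in\alpha(P,x)\cap\alpha(P,y)=\alpha(P,x\vee y)=\alpha(P)=P,
\]
where the last two equalities hold because $x\vee y\in P$ and $P$ is an $\alpha$-filter. On the other hand $\mathscr{C}$ is $\vee$-closed, so $c_1\vee c_2\in\mathscr{C}$, contradicting $P\cap\mathscr{C}=\emptyset$. Therefore $P$ is prime.

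The argument is essentially routine given the machinery already in place; the two points needing a word of care are the verification that $\alpha(\mathfrak{A})$ is closed under unions of chains (so that Zorn applies), which is immediate from the pointwise description $\alpha(G)=\bigcup_{x\in G}x^{\perp\perp}$ of Proposition \ref{veealphafilter}\ref{veealphafilter1}, and the meet identity $\alpha(P,x)\cap\alpha(P,y)=\alpha(P,x\vee y)$ of Proposition \ref{veealphafilter}\ref{veealphafilter4}, which plays here exactly the role that Remark \ref{genfilprop}\ref{genfilprop3} plays in the proof of Theorem \ref{prfilth}. I do not anticipate a genuine obstacle; the only subtlety is to remember that one must enlarge $P$ by the $\alpha$-closure $\alpha(P,x)$ of $P\cup\{x\}$ rather than by the ordinary generated filter $\mathscr{F}(P,x)$, since the maximality of $P$ is only asserted within the class of $\alpha$-filters.
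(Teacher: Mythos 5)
Your proof is correct and follows the same skeleton as the paper: Zorn's lemma applied to the family of $\alpha$-filters containing $F$ and disjoint from $\mathscr{C}$ (the closure of $\alpha(\mathfrak{A})$ under unions of chains being exactly the point you check), followed by the standard contradiction in which both $\alpha(P,x)$ and $\alpha(P,y)$ must meet $\mathscr{C}$ and the join of the two witnesses lands back in $P$. The only divergence is in how that last containment is obtained: you simply push the witnesses up to $c_1\vee c_2$ and invoke the meet identity $\alpha(P,x)\cap\alpha(P,y)=\alpha(P,x\vee y)$ of Proposition \ref{veealphafilter}\ref{veealphafilter4} together with $\alpha(P,x\vee y)=P$, whereas the paper unwinds membership via Proposition \ref{veealphafilter}\ref{veealphafilter3} and shows $a_x\vee a_y\in P$ by an explicit coannihilator computation using $(u\vee v)^{\perp\perp}=u^{\perp\perp}\cap v^{\perp\perp}$ (Proposition \ref{4fxpro}\ref{4fxpro3}) and the inequality \ref{res2}. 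Your version is the cleaner, more modular one (and mirrors exactly the role Remark \ref{genfilprop}\ref{genfilprop3} plays in Theorem \ref{prfilth}); the paper's computation is essentially an inline re-proof of the special case of \ref{veealphafilter4} that it needs, so nothing is gained or lost beyond economy of citation.
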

\begin{proof}
Let $\Sigma=\{G\in \alpha(\mathfrak{A})|F\subseteq G,~G\cap \mathscr{C}=\emptyset\}$. It is easy to see that $\Sigma$ satisfies the conditions of Zorn's lemma. Let $P$ be a maximal element of $\Sigma$. Assume that $x\vee y\in P$ and neither $x\notin P$ nor $y\notin P$. By maximality of $P$ we have $\alpha(P,x)\cap \mathscr{C}\neq\emptyset$ and $\alpha(P,y)\cap \mathscr{C}\neq\emptyset$. Suppose $a_x\in \alpha(P,x)\cap \mathscr{C}$ and $a_y\in \alpha(P,y)\cap \mathscr{C}$. By Proposition \ref{veealphafilter}\ref{veealphafilter3}, there exist $p_x,p_y\in P$ and integers $n,m$ such that $a_x\in (p_x\odot x^n)^{\perp\perp}$ and $a_y\in (p_y\odot y^m)^{\perp\perp}$. It follows that
 \[
 \begin{array}{ll}
   a_x\vee a_y & \in (p_x\odot x^n)^{\perp\perp}\cap (p_y\odot y^m)^{\perp\perp} \\
    & =((p_x\odot x^n)\vee (p_y\odot y^m))^{\perp\perp} \\
    & \subseteq ((p_x\vee p_y)\odot (p_x\vee y^{m}))\odot (x^{n}\vee p_y)\odot (x\vee y)^{nm})^{\perp\perp} \\
    & \subseteq P.
 \end{array}
 \]
 It is a contradiction. So $x\in P$ or $y\in P$ and it shows that $P$ is a prime filter.
\end{proof}

In the sequel for any residuated lattice $\mathfrak{A}$ we set $Spec_{\alpha}(\mathfrak{A})=Spec(\mathfrak{A})\cap \alpha(\mathfrak{A})$. \begin{corollary}\label{alphaintprimfilt}
Let $F$ be an $\alpha$-filter of $\mathfrak{A}$ and $X$ be a non-empty subset of $A$. The following assertions hold:
\begin{enumerate}
\item  [(1) \namedlabel{alphaintprimfilt1}{(1)}]  If $X\nsubseteq F$, then there exists $P\in Spec_{\alpha}(\mathfrak{A})$ such that $F\subseteq P$ and $P$ is maximal with respect to the property $X\nsubseteq P$;
\item  [(2) \namedlabel{alphaintprimfilt2}{(2)}] $\alpha(X)=\bigcap \{P\in Spec_{\alpha}(\mathfrak{A})|X\subseteq P\}$.
\end{enumerate}
\end{corollary}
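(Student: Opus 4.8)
The plan is to prove $(1)$ by a Zorn's lemma argument modelled on the proof of Theorem~\ref{alphaprfilth}, and then to deduce $(2)$ from it. For $(1)$, set
\[
\Sigma=\{G\in\alpha(\mathfrak{A})\mid F\subseteq G,\ X\nsubseteq G\},
\]
which is non-empty because $F\in\Sigma$. I would first check that $\Sigma$ meets the hypothesis of Zorn's lemma: the union of a chain of $\alpha$-filters is again an $\alpha$-filter (if $a$ lies in the union, then $a\in G$ for some member $G$ of the chain, whence $a^{\perp\perp}\subseteq G$ lies in the union), this union contains $F$, and it still fails to contain $X$; hence every chain in $\Sigma$ has an upper bound in $\Sigma$ and $\Sigma$ has a maximal element $P$. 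By construction $F\subseteq P$, $X\nsubseteq P$, and $P$ is maximal with that property.

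The substantive step is showing that $P$ is prime. Suppose $x\vee y\in P$ while $x\notin P$ and $y\notin P$. Then $\alpha(P,x)$ and $\alpha(P,y)$ are $\alpha$-filters containing $F$ and properly containing $P$, so by maximality of $P$ neither belongs to $\Sigma$; that is, $X\subseteq\alpha(P,x)$ and $X\subseteq\alpha(P,y)$. By Proposition~\ref{veealphafilter}\ref{veealphafilter4}, $\alpha(P,x)\cap\alpha(P,y)=\alpha(P,x\vee y)$, and since $x\vee y\in P$ and $P$ is itself an $\alpha$-filter, $\alpha(P,x\vee y)=\alpha(P)=P$. Hence $X\subseteq P$, contradicting $P\in\Sigma$. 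Therefore $P\in Spec_{\alpha}(\mathfrak{A})$, which proves $(1)$.

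For $(2)$, the inclusion $\alpha(X)\subseteq\bigcap\{P\in Spec_{\alpha}(\mathfrak{A})\mid X\subseteq P\}$ is immediate, since every such $P$ is an $\alpha$-filter containing $X$, hence containing $\alpha(X)$. For the reverse inclusion, take $a\notin\alpha(X)$; then $\{a\}\nsubseteq\alpha(X)$, so applying $(1)$ with the $\alpha$-filter $\alpha(X)$ in the role of $F$ and the singleton $\{a\}$ in the role of $X$ yields $P\in Spec_{\alpha}(\mathfrak{A})$ with $\alpha(X)\subseteq P$, hence $X\subseteq P$, and $a\notin P$; thus $a$ is not in the intersection. (Equivalently, this last step reads straight off Theorem~\ref{alphaprfilth} applied to the $\vee$-closed set $\{a\}$, which does not meet $\alpha(X)$.)

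The only real work is the primality argument in $(1)$, and it is entirely carried by the distributive-type identity $\alpha(P,x)\cap\alpha(P,y)=\alpha(P,x\vee y)$ of Proposition~\ref{veealphafilter}, exactly as in Theorem~\ref{alphaprfilth}; the verification that $\Sigma$ is closed under unions of chains is routine once one uses that $\alpha$-filters form an algebraic closed set system.
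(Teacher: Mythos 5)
Your deduction of part (2) from part (1) is exactly the paper's, and your primality argument for a maximal element of $\Sigma$ (via $\alpha(P,x)\cap\alpha(P,y)=\alpha(P,x\vee y)=\alpha(P)=P$) is correct and in fact tidier than the computation in Theorem~\ref{alphaprfilth}. The problem is the Zorn step in part (1). The defining property of your family $\Sigma=\{G\in\alpha(\mathfrak{A})\mid F\subseteq G,\ X\nsubseteq G\}$ is \emph{not} preserved under unions of chains when $X$ is infinite: each member $G_i$ of a chain may omit a different element of $X$, while the union $\bigcup_i G_i$ (which is indeed an $\alpha$-filter containing $F$) contains all of $X$. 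So your claim that the union ``still fails to contain $X$'' is unjustified, and with it the existence of a maximal element of $\Sigma$; the argument as written only covers finite $X$, whereas the statement allows an arbitrary non-empty $X$.

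The paper sidesteps this entirely: it fixes a single element $x\in X\setminus F$ and applies Theorem~\ref{alphaprfilth} to the $\vee$-closed set $\mathscr{C}=\{x\}$, which does not meet $F$. Missing one fixed element \emph{is} preserved under unions of chains, so that theorem produces a prime $\alpha$-filter $P\supseteq F$ with $x\notin P$ (hence $X\nsubseteq P$), maximal with respect to excluding $x$ --- and that is the maximality the corollary is read as asserting and the only form ever used later. Your proof is repaired by the same reduction: replace $X$ by a chosen $x\in X\setminus F$ in the definition of $\Sigma$ (your primality argument then goes through verbatim with $\{x\}$ in place of $X$), or simply quote Theorem~\ref{alphaprfilth} as the paper does. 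Note that part (2) only needs the singleton case anyway, since there you apply (1) with $\{a\}$ in the role of $X$, so after this fix your treatment of (2) coincides with the paper's.
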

\begin{proof}
\begin{enumerate}
  \item [\ref{alphaintprimfilt1}:] Let $x\in X-F$. By taking $\mathscr{C}=\{x\}$ it follows by Theorem \ref{alphaprfilth}.
  \item [\ref{alphaintprimfilt2}:] Set $\sigma_{X}=\{P\in Spec_{\alpha}(\mathfrak{A})|X\subseteq P\}$. Obviously, we have $\alpha(X)\subseteq \bigcap \sigma_{X}$. Now, let $a\notin \alpha(X)$. By \ref{alphaintprimfilt1} follows that there exits an $\alpha$-prime filter $P$ containing $\alpha(X)$ such that $a\notin P$. It shows that $a\notin \bigcap \sigma_{X}$.
\end{enumerate}
\end{proof}

In the following proposition we characterize weakly disjunctive residuated lattices by means of $\alpha$ filters.
\begin{proposition}\label{alphadiseq}
Let $\mathfrak{A}$ be a residuated lattice. The following assertions are equivalent:
\begin{enumerate}
\item  [$(1)$ \namedlabel{alphadiseq1}{$(1)$}] $\mathfrak{A}$ is weakly disjunctive;
\item  [$(2)$ \namedlabel{alphadiseq2}{$(2)$}] any filter of $\mathfrak{A}$ is an $\alpha$-filter;
\item  [$(3)$ \namedlabel{alphadiseq3}{$(3)$}] any prime filter of $\mathfrak{A}$ is an $\alpha$-filter.
\end{enumerate}
\end{proposition}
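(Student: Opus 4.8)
The plan is to prove the chain of implications $\ref{alphadiseq1}\Rightarrow\ref{alphadiseq2}\Rightarrow\ref{alphadiseq3}\Rightarrow\ref{alphadiseq1}$, using Proposition \ref{alpfiltpro} to translate the $\alpha$-filter condition into the ``$x^{\perp}=y^{\perp}$ and $x\in F$ imply $y\in F$'' form throughout.

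For $\ref{alphadiseq1}\Rightarrow\ref{alphadiseq2}$: assume $\mathfrak{A}$ is weakly disjunctive, i.e. $f_4$ (equivalently $f_3$) is injective, and let $F$ be an arbitrary filter. By Proposition \ref{alpfiltpro}\ref{alpfiltpro2} it suffices to show that $x^{\perp}=y^{\perp}$ and $x\in F$ force $y\in F$. From $x^{\perp}=y^{\perp}$ we get $f_4(\mathscr{F}(x))=x^{\perp}=y^{\perp}=f_4(\mathscr{F}(y))$, so injectivity of $f_4$ gives $\mathscr{F}(x)=\mathscr{F}(y)$; since $x\in F$ means $\mathscr{F}(x)\subseteq F$, we conclude $y\in\mathscr{F}(y)=\mathscr{F}(x)\subseteq F$, as required. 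The implication $\ref{alphadiseq2}\Rightarrow\ref{alphadiseq3}$ is immediate, since prime filters are filters.

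For $\ref{alphadiseq3}\Rightarrow\ref{alphadiseq1}$: assume every prime filter is an $\alpha$-filter; I must show $f_4$ is injective, i.e. $x^{\perp}=y^{\perp}$ implies $\mathscr{F}(x)=\mathscr{F}(y)$. Suppose toward a contradiction that $x^{\perp}=y^{\perp}$ but $\mathscr{F}(x)\neq\mathscr{F}(y)$; by symmetry say $y\notin\mathscr{F}(x)$, equivalently $x^{n}\nleq y$ for every $n$. Then the $\vee$-closed set $\{y\}$ (a singleton is trivially $\vee$-closed) does not meet the filter $\mathscr{F}(x)$, so by Theorem \ref{prfilth} there is a prime filter $P\supseteq\mathscr{F}(x)$ with $y\notin P$. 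Thus $x\in P$, and since $P$ is prime it is an $\alpha$-filter by hypothesis; but $x^{\perp}=y^{\perp}$ together with Proposition \ref{alpfiltpro}\ref{alpfiltpro2} then forces $y\in P$, a contradiction. Hence $\mathscr{F}(x)=\mathscr{F}(y)$ and $\mathfrak{A}$ is weakly disjunctive.

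The only step requiring care is $\ref{alphadiseq3}\Rightarrow\ref{alphadiseq1}$: one must separate the points $\mathscr{F}(x)$ and $\mathscr{F}(y)$ of $\mathscr{PF}(\mathfrak{A})$ by a prime filter, which is exactly what Theorem \ref{prfilth} (applied to the singleton $\vee$-closed set witnessing $y\notin\mathscr{F}(x)$) delivers; the rest is a direct application of Proposition \ref{alpfiltpro}. I do not anticipate any genuine obstacle beyond bookkeeping the translation between $\mathscr{F}(x)\subseteq F$ and $x\in F$.
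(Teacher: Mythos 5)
Your proposal is correct and takes essentially the same approach as the paper: the cycle $\ref{alphadiseq1}\Rightarrow\ref{alphadiseq2}\Rightarrow\ref{alphadiseq3}\Rightarrow\ref{alphadiseq1}$ with Proposition \ref{alpfiltpro}\ref{alpfiltpro2} doing the translation between injectivity of $f_4$ and the $\alpha$-filter condition. The only difference is cosmetic, in $\ref{alphadiseq3}\Rightarrow\ref{alphadiseq1}$: you separate $\mathscr{F}(x)$ from $y$ by citing Theorem \ref{prfilth} with the singleton $\vee$-closed set $\{y\}$, whereas the paper carries out the same Zorn's-lemma/primality argument by hand (with the roles of $x$ and $y$ interchanged), so your version is a slight streamlining of the identical idea.
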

\begin{proof}
\item [\ref{alphadiseq1}$\Rightarrow$\ref{alphadiseq2}:] It follows by Proposition \ref{alpfiltpro}\ref{alpfiltpro2}.
\item [\ref{alphadiseq2}$\Rightarrow$\ref{alphadiseq3}:] It is trivial.
\item [\ref{alphadiseq3}$\Rightarrow$\ref{alphadiseq1}:] Let $x^{\perp}=y^{\perp}$ and $\mathscr{F}(x)\neq\mathscr{F}(y)$ for some $x,y\in A$. Without loss of generality suppose that $\mathscr{F}(x)\nsubseteq \mathscr{F}(y)$. Let $\Sigma=\{F\in \mathscr{F}(\mathfrak{A})|y\in F,~x\notin F\}$. It is obvious that $\mathscr{F}(y)\in \Sigma$ and $\Sigma$ satisfies the conditions of Zorn's lemma. Let $P$ be the maximal element of $\Sigma$. Let $a\vee b\in P$ and $a,b\notin P$. So $\mathscr{F}(P,a),\mathscr{F}(P,b)\notin \Sigma$ and it states that $x\in \mathscr{F}(P,a)\cap \mathscr{F}(P,b)$. By Remark \ref{genfilprop}\ref{genfilprop3} follows that $x\in \mathscr{F}(P,a)\cap \mathscr{F}(P,b)=\mathscr{F}(P,a\vee b)=P$ and it leads us to a contradiction. Thus $P$ is a prime filter and it implies $x\in x^{\perp\perp}=y^{\perp\perp}\subseteq P$; a contradiction.
\end{proof}

In the following proposition we show that any coannihilator filter and any $\omega$-filter is an $\alpha$-filter.
\begin{proposition}\label{alphafofilter}
Let $\mathfrak{A}$ be a residuated lattice. The following assertions hold:
\begin{enumerate}
  \item [$(1)$ \namedlabel{alphafofilter1}{$(1)$}] $\Gamma(\mathfrak{A})\subseteq \alpha(\mathfrak{A})$;
  \item [$(2)$ \namedlabel{alphafofilter2}{$(2)$}] $\Omega(\mathfrak{A})\subseteq \alpha(\mathfrak{A})$.
\end{enumerate}
\end{proposition}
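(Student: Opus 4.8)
The plan is to derive both inclusions from the characterisation of $\alpha$-filters in Proposition \ref{alpfiltpro}, namely the equivalence \ref{alpfiltpro1}$\Leftrightarrow$\ref{alpfiltpro2}: a filter $F$ is an $\alpha$-filter precisely when $x^{\perp}=y^{\perp}$ and $x\in F$ imply $y\in F$. First I would note that the sets involved really are filters, so that the criterion applies --- $X^{\perp}=(\mathscr{F}(X))^{\perp}$ is the pseudocomplement of the filter $\mathscr{F}(X)$ by Proposition \ref{1fxpro}\ref{1fxpro4} and Proposition \ref{9fxpro}, while $\omega(I)$ is a filter by \citet[Proposition 3.3]{raskon}. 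The degenerate cases $\emptyset^{\perp}=A$ and $\omega(I)=A$ (for $I$ improper) are harmless, since $A\in\alpha(\mathfrak{A})$.

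For \ref{alphafofilter1} I would then argue as follows. Suppose $x\in X^{\perp}$ and $y\in A$ satisfies $y^{\perp}=x^{\perp}$; for every $z\in X$ one has $z\vee x=1$, i.e.\ $z\in x^{\perp}=y^{\perp}$, whence $z\vee y=1$, and since $z$ was arbitrary, $y\in X^{\perp}$. By Proposition \ref{alpfiltpro} this gives $X^{\perp}\in\alpha(\mathfrak{A})$, hence $\Gamma(\mathfrak{A})\subseteq\alpha(\mathfrak{A})$.

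For \ref{alphafofilter2} the argument is parallel. Given an ideal $I$ of $\ell(\mathfrak{A})$, suppose $x\in\omega(I)$ and $y^{\perp}=x^{\perp}$; pick $t\in I$ with $t\vee x=1$, so that $t\in x^{\perp}=y^{\perp}$ and therefore $t\vee y=1$, which witnesses $y\in\omega(I)$. Again Proposition \ref{alpfiltpro} yields $\omega(I)\in\alpha(\mathfrak{A})$, so $\Omega(\mathfrak{A})\subseteq\alpha(\mathfrak{A})$.

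I do not expect a genuine obstacle here; the work lies entirely in selecting the right reformulation. The underlying reason both claims hold is that for these two families the defining condition on an element $a$ --- "$a\vee t=1$ for all $t$ in a fixed set", respectively "for some $t$ in a fixed ideal" --- is expressed solely through the coannulet $a^{\perp}$, so criterion \ref{alpfiltpro2} applies at once. An alternative for \ref{alphafofilter1} would be to write $X^{\perp}=\bigcap_{x\in X}x^{\perp}$ and invoke closure of $\alpha(\mathfrak{A})$ under intersections, reducing to coannulets, but that route still requires the same coannulet computation and so offers no real saving.
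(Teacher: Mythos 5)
Your proof is correct, and it takes a slightly different route from the paper's. The paper verifies the definition of an $\alpha$-filter directly: for a coannihilator $F=X^{\perp}$ it uses monotonicity of $(\cdot)^{\perp\perp}$ together with the identity $F^{\perp\perp}=F$, and for $\omega(I)$ it runs a short element chase ($y\in x^{\perp}$ with $x\in I$, then $z\in y^{\perp\perp}$ gives $x\in z^{\perp}$, hence $z\in x^{\perp}\subseteq\omega(I)$). You instead verify criterion \ref{alpfiltpro2} of Proposition \ref{alpfiltpro}, observing that membership of an element $a$ in $X^{\perp}$, respectively in $\omega(I)$, depends on $a$ only through the coannulet $a^{\perp}$. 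Both routes rest on the same elementary coannulet facts; yours has the merit of unifying the two parts under one principle and of making explicit the hypothesis Proposition \ref{alpfiltpro} requires, namely that $X^{\perp}$ and $\omega(I)$ are filters (via Propositions \ref{1fxpro}\ref{1fxpro4} and \ref{9fxpro}, and the cited result on $\Omega(\mathfrak{A})$), a point the paper leaves tacit. Conversely, the paper's argument for \ref{alphafofilter1} is a one-liner because $F^{\perp\perp}=F$ does all the work, and for \ref{alphafofilter2} your computation and the paper's are essentially the same chase, phrased through $x^{\perp}=y^{\perp}$ rather than $y\in x^{\perp\perp}$.
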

\begin{proof}
\item [\ref{alphafofilter1}:] Let $F$ be a coannihilator filter and $x\in F$. So $x^{\perp\perp}\subseteq F^{\perp\perp}=F$ and it shows that $F$ is an $\alpha$-filter.
\item [\ref{alphafofilter2}:] Let $F$ be an $\omega$-filter of $\mathfrak{A}$. So there exists a lattice ideal $I$ of $\mathfrak{A}$ such that $F=\omega(I)$. Consider $y\in F$. So there exists $x\in I$ such that $y\in x^{\perp}$ and it follows that $x\in y^{\perp}$. Assume  $z\in y^{\perp\perp}$. Thus $y^{\perp}\subseteq z^{\perp}$ and it follows that $x\in z^{\perp}$. So we observe that $z\in x^{\perp}\subseteq F$.
\end{proof}
\begin{corollary}\label{examomegfilt}
Let $\mathfrak{A}$ be a residuated lattice. The following assertions hold:
\begin{enumerate}
  \item [$(1)$ \namedlabel{examomegfilt1}{$(1)$}] Any non-dense prime filter is an $\alpha$-filter;
  \item [$(2)$ \namedlabel{examomegfilt2}{$(2)$}] any minimal prime filter is an $\alpha$-filter.
\end{enumerate}
\end{corollary}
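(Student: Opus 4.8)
The plan is to deduce both statements directly from the structural facts already established, without any fresh computation: Proposition \ref{nondepri} and Proposition \ref{alphafofilter} for part $(1)$, and the characterization of minimal prime filters in Theorem \ref{mincor} together with Proposition \ref{alphafofilter} for part $(2)$.

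For $(1)$, I would first recall that by Proposition \ref{nondepri} every non-dense prime filter $P$ is a coannulet, i.e.\ $P=x^{\perp}$ for a suitable $x\in A$; in particular $P\in\gamma(\mathfrak{A})\subseteq\Gamma(\mathfrak{A})$. Then Proposition \ref{alphafofilter}\ref{alphafofilter1} gives $\Gamma(\mathfrak{A})\subseteq\alpha(\mathfrak{A})$, and hence $P$ is an $\alpha$-filter. The only point to flag is the inclusion $\gamma(\mathfrak{A})\subseteq\Gamma(\mathfrak{A})$, which is immediate from the definitions of coannulets and coannihilators.

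For $(2)$, the quickest route uses Theorem \ref{mincor}: a minimal prime filter $P$ satisfies $P=D(P)=\omega(P^{c})$, so $P\in\Omega(\mathfrak{A})$, and then Proposition \ref{alphafofilter}\ref{alphafofilter2} yields $\Omega(\mathfrak{A})\subseteq\alpha(\mathfrak{A})$, whence $P$ is an $\alpha$-filter. Alternatively one can argue by hand: given $x\in P$ and $y\in x^{\perp\perp}$, one has $x^{\perp}\subseteq y^{\perp}$ (by Proposition \ref{1fxpro}\ref{1fxpro1}); since $x\in P$, the ``precisely one'' clause of Theorem \ref{mincor}\ref{mincor3} forces $x^{\perp}\nsubseteq P$, hence $y^{\perp}\nsubseteq P$ because $y^{\perp}\supseteq x^{\perp}$, and applying Theorem \ref{mincor}\ref{mincor3} once more gives $y\in P$; thus $x^{\perp\perp}\subseteq P$.

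There is essentially no obstacle here, since the substantive work has been front-loaded into the cited propositions; the only care required is to invoke the ``precisely one'' part of Theorem \ref{mincor}\ref{mincor3} correctly in the direct version of $(2)$ and to note the elementary inclusion $\gamma(\mathfrak{A})\subseteq\Gamma(\mathfrak{A})$ in $(1)$.
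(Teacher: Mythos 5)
Your proposal is correct and follows essentially the same route as the paper: part $(1)$ combines Proposition \ref{nondepri} with Proposition \ref{alphafofilter}\ref{alphafofilter1}, and part $(2)$ combines Theorem \ref{mincor} with Proposition \ref{alphafofilter}\ref{alphafofilter2}, which is exactly the paper's proof. The extra direct argument you give for $(2)$ via the ``precisely one'' clause of Theorem \ref{mincor}\ref{mincor3} is also sound, but it is only a supplementary alternative to the same two-line reduction.
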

\begin{proof}
\item [\ref{examomegfilt1}]: It is an immediate consequence of Proposition \ref{nondepri} and Proposition \ref{alphafofilter}\ref{alphafofilter1}.
\item [\ref{examomegfilt2}]: It is an immediate consequence of Theorem \ref{mincor} and Proposition \ref{alphafofilter}\ref{alphafofilter2}.
\end{proof}

In the following proposition we give some equivalent conditions for each $\alpha$-filter to be a coannihilator.
\begin{proposition}\label{alphacoann}
Let $\mathfrak{A}$ be a residuated lattice. The following assertions are equivalent:
\begin{enumerate}
  \item [$(1)$ \namedlabel{alphacoann1}{$(1)$}] Any proper $\alpha$-filter is non-dense;
  \item [$(2)$ \namedlabel{alphacoann2}{$(2)$}] any dense filter contains a dense element;
  \item [$(3)$ \namedlabel{alphacoann3}{$(3)$}] any $\alpha$-filter is a coannihilator;
  \item [$(4)$ \namedlabel{alphacoann4}{$(4)$}] for any proper $\alpha$-filter $F$ there exists a proper $\alpha$-filter $G$ such that $F\cap G=\{1\}$ ($\alpha(\mathfrak{A})$ is semi-complemented);
  \item [$(5)$ \namedlabel{alphacoann5}{$(5)$}] $\alpha(\mathfrak{A})$ has a unique dense element.
\end{enumerate}
Moreover, any of the above assertions implies that $\mathfrak{A}$ is quasicomplemented.
\end{proposition}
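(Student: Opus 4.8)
The plan is to prove the cycle $\ref{alphacoann1}\Rightarrow\ref{alphacoann2}\Rightarrow\ref{alphacoann3}\Rightarrow\ref{alphacoann4}\Rightarrow\ref{alphacoann1}$, to handle $\ref{alphacoann5}$ against $\ref{alphacoann1}$ separately, and finally to deduce quasicomplementedness from $\ref{alphacoann2}$ by way of Proposition \ref{44fxpro}. For $\ref{alphacoann1}\Rightarrow\ref{alphacoann2}$, given a dense filter $F$ I would pass to the $\alpha$-filter $\alpha(F)$: since $F\subseteq\alpha(F)$ we get $\alpha(F)^{\perp}\subseteq F^{\perp}=\{1\}$, so $\alpha(F)$ is a dense $\alpha$-filter, hence not proper by $\ref{alphacoann1}$, i.e. $\alpha(F)=A$; then $0\in\alpha(F)=\bigcup_{x\in F}x^{\perp\perp}$ by Proposition \ref{veealphafilter}\ref{veealphafilter1}, so $0\in x^{\perp\perp}$ for some $x\in F$, and Proposition \ref{1fxpro}\ref{1fxpro1} forces $x^{\perp}\subseteq 0^{\perp}=\{1\}$, i.e. $x$ is a dense element of $F$. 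Conversely $\ref{alphacoann2}\Rightarrow\ref{alphacoann1}$ is immediate: a proper dense $\alpha$-filter $F$ would contain a dense $d$, whence $A=d^{\perp\perp}\subseteq F$, a contradiction.

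For $\ref{alphacoann2}\Rightarrow\ref{alphacoann3}$, take $F\in\alpha(\mathfrak{A})$ and $x\in F^{\perp\perp}$; it suffices to prove $x\in F$, since then $F=F^{\perp\perp}\in\Gamma(\mathfrak{A})$. Put $E:=\{a\in A\mid a\vee x\in F\}$; using \ref{res2} one checks routinely that $E$ is a filter with $F\subseteq E$ and $x^{\perp}\subseteq E$. Since $x\in F^{\perp\perp}$ gives $F^{\perp}\subseteq x^{\perp}$ (Proposition \ref{1fxpro}\ref{1fxpro1}), we obtain $E^{\perp}\subseteq F^{\perp}\cap x^{\perp\perp}\subseteq x^{\perp}\cap x^{\perp\perp}=\{1\}$, so $E$ is dense. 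By $\ref{alphacoann2}$, $E$ contains a dense element $d$, so $d\vee x\in F$ and $d^{\perp\perp}=A$; then Proposition \ref{4fxpro}\ref{4fxpro3} gives $(d\vee x)^{\perp\perp}=d^{\perp\perp}\cap x^{\perp\perp}=x^{\perp\perp}$, and as $F$ is an $\alpha$-filter containing $d\vee x$ we conclude $x\in x^{\perp\perp}=(d\vee x)^{\perp\perp}\subseteq F$. For $\ref{alphacoann3}\Rightarrow\ref{alphacoann4}$, given a proper $\alpha$-filter $F$ I would take $G:=F^{\perp}$, which is an $\alpha$-filter by Proposition \ref{alphafofilter}\ref{alphafofilter1}: then $F\cap G=\{1\}$, and $G\neq\{1\}$ because otherwise $F=F^{\perp\perp}=\{1\}^{\perp}=A$. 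Finally $\ref{alphacoann4}\Rightarrow\ref{alphacoann1}$: a nontrivial $\alpha$-filter $G$ with $F\cap G=\{1\}$ satisfies $G\subseteq F^{\perp}$ by Proposition \ref{9fxpro}, forcing $F^{\perp}\neq\{1\}$.

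For $\ref{alphacoann1}\Leftrightarrow\ref{alphacoann5}$ I would first note that the pseudocomplement of an $\alpha$-filter $F$ in the Heyting algebra $\alpha(\mathfrak{A})$ (Proposition \ref{framealpha}) is exactly $F^{\perp}$: indeed $F^{\perp}\in\alpha(\mathfrak{A})$ (Proposition \ref{alphafofilter}\ref{alphafofilter1}), $F\cap F^{\perp}=\{1\}$, and any $\alpha$-filter $H$ with $F\cap H=\{1\}$ lies in $F^{\perp}$ by Proposition \ref{9fxpro}. Hence the dense elements of $\alpha(\mathfrak{A})$ are precisely the dense $\alpha$-filters, and $A$ is always one of them, so $\alpha(\mathfrak{A})$ has a unique dense element iff $A$ is the only dense $\alpha$-filter, which is $\ref{alphacoann1}$. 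For the closing assertion I would check condition \ref{44fxpro2} of Proposition \ref{44fxpro} using $\ref{alphacoann2}$: let $P$ be a prime filter with $P\cap\mathfrak{d}(\mathfrak{A})=\emptyset$ and $x\in A$; if $x\notin P$ then primeness yields $x^{\perp}\subseteq P$, while if $x\in P$ and also $x^{\perp}\subseteq P$ then $\mathscr{F}(x)\veebar x^{\perp}\subseteq P$, but this filter is dense (its coannihilator is contained in $x^{\perp}\cap x^{\perp\perp}=\{1\}$), so by $\ref{alphacoann2}$ it would put a dense element into $P$ --- impossible; thus $P$ contains precisely one of $x$ and $x^{\perp}$ and so is a minimal prime filter by Theorem \ref{mincor}\ref{mincor3}, whence $\mathfrak{A}$ is quasicomplemented.

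The steps I expect to cost the most effort are $\ref{alphacoann2}\Rightarrow\ref{alphacoann3}$ --- the passage to the auxiliary dense filter $E$ together with the identity $(d\vee x)^{\perp\perp}=x^{\perp\perp}$, which is precisely what makes the $\alpha$-filter hypothesis bite --- and the final clause, where one must recognize $\mathscr{F}(x)\veebar x^{\perp}$ as dense in order to invoke Theorem \ref{mincor}\ref{mincor3}. The identification of the pseudocomplement of $\alpha(\mathfrak{A})$ with $(\,\cdot\,)^{\perp}$ needed for $\ref{alphacoann5}$ is routine but deserves to be stated carefully.
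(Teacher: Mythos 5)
Your proof is correct, and several steps coincide with the paper's: the implication \ref{alphacoann1}$\Rightarrow$\ref{alphacoann2} via $\alpha(F)=\bigcup_{x\in F}x^{\perp\perp}$ and \ref{alphacoann3}$\Rightarrow$\ref{alphacoann4} via $G=F^{\perp}$ are essentially the paper's arguments. Where you genuinely diverge: for \ref{alphacoann2}$\Rightarrow$\ref{alphacoann3} the paper works globally with the dense filter $F\veebar F^{\perp}$, picks a dense element $x$ with $a\odot b\leq x$, $a\in F$, $b\in F^{\perp}$, and concludes $F^{\perp\perp}\subseteq b^{\perp}\subseteq a^{\perp\perp}\subseteq F$, whereas you argue element-wise through the auxiliary filter $E=\{a\mid a\vee x\in F\}$ and the identity $(d\vee x)^{\perp\perp}=x^{\perp\perp}$; both are sound, and yours makes the role of the $\alpha$-condition a bit more transparent, at the cost of checking that $E$ is a filter (which your use of \ref{res2} handles correctly). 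You also close the cycle by a direct \ref{alphacoann4}$\Rightarrow$\ref{alphacoann1} and treat \ref{alphacoann5} separately by identifying the pseudocomplement in $\alpha(\mathfrak{A})$ with $F\mapsto F^{\perp}$; the paper instead runs \ref{alphacoann4}$\Rightarrow$\ref{alphacoann5}$\Rightarrow$\ref{alphacoann1} and leaves that identification implicit, so your explicitness is a small gain in rigor (both proofs read the $G$ in \ref{alphacoann4} as nontrivial, $G\neq\{1\}$, which is clearly the intended meaning of the semi-complementedness condition). Finally, for the ``moreover'' clause the paper argues from \ref{alphacoann1}, showing $\alpha(x^{\perp}\veebar x^{\perp\perp})=A$ and extracting an explicit $a\in x^{\perp}$ with $x^{\perp\perp}=a^{\perp}$, while you verify the prime-filter criterion of Proposition \ref{44fxpro}\ref{44fxpro2} using \ref{alphacoann2} and Theorem \ref{mincor}\ref{mincor3}; your route is indirect but valid, whereas the paper's produces the quasicomplement witness explicitly.
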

\begin{proof}
\begin{enumerate}
  \item [] \ref{alphacoann1}$\Rightarrow$\ref{alphacoann2}: Let $F$ be a dense filter of $\mathfrak{A}$. It implies that $\alpha(F)$ is a dense filter and it means that $\alpha(F)=A$. Therefore $0\in \alpha(F)$ and it means that $x^{\perp}=1$ for some element $x\in F$.
  \item [] \ref{alphacoann2}$\Rightarrow$\ref{alphacoann3}: Let $F$ be an $\alpha$-filter. $F,F^{\perp}\subseteq F\veebar F^{\perp}$ and so $(F\veebar F^{\perp})^{\perp}\subseteq F^{\perp}\cap F^{\perp\perp}=\textbf{1}$. So $F\veebar F^{\perp}$ is a dense filter and so it contains a dense element. Assume that $x$ is a dense element in $F\veebar F^{\perp}$. So there exist $a\in F$ and $b\in F^{\perp}$ such that $a\odot b\leq x$. It implies that $a^{\perp}\cap b^{\perp}\subseteq x^{\perp}=\textbf{1}$ and since $a^{\perp\perp}$ is the pseudo-complement of $a^{\perp}$ it follows that $b^{\perp}\subseteq a^{\perp\perp}\subseteq F$ as $F$ is an $\alpha$-filter. On the other hand $b\in F^{\perp}$ implies that $F^{\perp\perp}\subseteq b^{\perp}$ and it states that $F^{\perp\perp}\subseteq F$. The other inclusion is evident.
  \item [] \ref{alphacoann3}$\Rightarrow$\ref{alphacoann4}: Since the set of all coannihilators forms a Boolean lattice, it follows that $\alpha(\mathfrak{A})$ is semi-complemented.
  \item [] \ref{alphacoann4}$\Rightarrow$\ref{alphacoann5}: It is obvious that $A$ is a dense element of $\alpha(\mathfrak{A})$. Now, if $F$ is a proper dense $\alpha$-filter, then there exists a filter $G\neq \textbf{1}$ such that $F\cap G=\textbf{1}$ and it implies that $G\subseteq F^{\perp}=\textbf{1}$. So $G=\textbf{1}$ and it is a contradiction.
  \item [] \ref{alphacoann5}$\Rightarrow$\ref{alphacoann1}: It is trivial.

 Now, let $\mathfrak{A}$ satisfies \ref{alphacoann1} and $x\in A$. Set $F=x^{\perp}\veebar x^{\perp\perp}$. It implies that $\alpha(F)^{\perp}\subseteq x^{\perp}\cap x^{\perp\perp}=\textbf{1}$ and it means that $\alpha(F)$ is a dense filter. So $\alpha(F)=A$ and it states that $0\in \alpha(F)$. It follows that $y^{\perp}=1$ for some element $y\in F$. Hence there exist $a\in x^{\perp}$ and $b\in x^{\perp\perp}$ such that $a\odot b\leq y$. Therefore $a^{\perp}\cap b^{\perp}=\textbf{1}$ and it shows that $a^{\perp}\subseteq b^{\perp\perp}\subseteq x^{\perp\perp}$. On the other hand $a\in x^{\perp}$ gives $x^{\perp\perp}\subseteq a^{\perp}$. Combining both the inclusions follows that $x^{\perp\perp}=a^{\perp}$. Hence $\mathfrak{A}$ is quasicomplemented.
\end{enumerate}
\end{proof}

In the following proposition we give some equivalent conditions for each $\alpha$-filter to be an $\omega$-filter.
\begin{theorem}\label{quacomofli}
Let $\mathfrak{A}$ be a residuated lattice. The following assertions are equivalent:
\begin{enumerate}
\item  [$(1)$ \namedlabel{quacomofli1}{$(1)$}] $\mathfrak{A}$ is quasicomplemented;
\item  [$(2)$ \namedlabel{quacomofli2}{$(2)$}] every $\alpha$-filter is an $\omega$-filter;
\item  [$(3)$ \namedlabel{quacomofli3}{$(3)$}] every coannihilator is an $\omega$-filter;
\item  [$(4)$ \namedlabel{quacomofli4}{$(4)$}] For any $x\in A$, $x^{\perp\perp}$ is an $\omega$-filter.
\end{enumerate}
\end{theorem}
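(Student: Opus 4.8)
The plan is to run the cycle $\ref{quacomofli1}\Rightarrow\ref{quacomofli2}\Rightarrow\ref{quacomofli3}\Rightarrow\ref{quacomofli4}\Rightarrow\ref{quacomofli1}$. Two of these are immediate. For $\ref{quacomofli2}\Rightarrow\ref{quacomofli3}$, every coannihilator is an $\alpha$-filter by Proposition \ref{alphafofilter}\ref{alphafofilter1}, hence an $\omega$-filter by \ref{quacomofli2}. For $\ref{quacomofli3}\Rightarrow\ref{quacomofli4}$, for each $x\in A$ the filter $x^{\perp\perp}=(x^{\perp})^{\perp}$ is a coannihilator, so \ref{quacomofli3} applies. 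Thus the content lies in $\ref{quacomofli4}\Rightarrow\ref{quacomofli1}$ and $\ref{quacomofli1}\Rightarrow\ref{quacomofli2}$.

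For $\ref{quacomofli4}\Rightarrow\ref{quacomofli1}$, fix $x\in A$ and suppose $x^{\perp\perp}=\omega(I)$ for an ideal $I$ of $\ell(\mathfrak{A})$; recall $\omega(I)=\bigcup_{z\in I}z^{\perp}$. By Proposition \ref{1fxpro}\ref{1fxpro3} we have $x\in x^{\perp\perp}=\omega(I)$, so there is $z\in I$ with $x\vee z=1$, i.e.\ $z\in x^{\perp}$. On one hand $z\in x^{\perp}$ gives $x^{\perp\perp}\subseteq z^{\perp}$, since every $a\in x^{\perp\perp}$ satisfies $a\vee z=1$; on the other hand $z\in I$ gives $z^{\perp}\subseteq\omega(I)=x^{\perp\perp}$. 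Hence $x^{\perp\perp}=z^{\perp}$ is a coannulet, and since $x$ was arbitrary, $\mathfrak{A}$ is quasicomplemented.

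For $\ref{quacomofli1}\Rightarrow\ref{quacomofli2}$, let $F$ be an $\alpha$-filter, so $F=\bigcup_{x\in F}x^{\perp\perp}$ by Proposition \ref{veealphafilter}\ref{veealphafilter1}. By Proposition \ref{qucomeq}\ref{qucomeq2} each $x\in F$ has a companion $y_x\in A$ with $x\vee y_x=1$ and $x\odot y_x\in\mathfrak{d}(\mathfrak{A})$; a short computation forces $x^{\perp\perp}=y_x^{\perp}$ (Proposition \ref{4fxpro}\ref{4fxpro2} and Proposition \ref{9fxpro} give $y_x^{\perp}\subseteq x^{\perp\perp}$, and $y_x\in x^{\perp}$ gives the reverse). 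Let $I$ be the ideal of $\ell(\mathfrak{A})$ generated by $\{y_x\mid x\in F\}$. Then $F\subseteq\omega(I)$, because $x\in x^{\perp\perp}=y_x^{\perp}\subseteq\omega(I)$. Conversely, if $a\in\omega(I)$, then $a\vee z=1$ for some $z\leq y_{x_1}\vee\cdots\vee y_{x_n}$ with $x_1,\dots,x_n\in F$, whence $a\in(y_{x_1}\vee\cdots\vee y_{x_n})^{\perp}=y_{x_1}^{\perp}\vee^{\Gamma}\cdots\vee^{\Gamma}y_{x_n}^{\perp}$ by Proposition \ref{4fxpro}\ref{4fxpro4}. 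Substituting $y_{x_i}^{\perp}=x_i^{\perp\perp}$, and using that in the Boolean lattice $\Gamma(\mathfrak{A})$ the complement of $x^{\perp}$ is $x^{\perp\perp}$ together with Proposition \ref{4fxpro}\ref{4fxpro2} and De Morgan's law, one gets $x_1^{\perp\perp}\vee^{\Gamma}\cdots\vee^{\Gamma}x_n^{\perp\perp}=(x_1\odot\cdots\odot x_n)^{\perp\perp}$. Since $F$ is a filter, $x_1\odot\cdots\odot x_n\in F$, and since $F$ is an $\alpha$-filter, $(x_1\odot\cdots\odot x_n)^{\perp\perp}\subseteq F$; thus $a\in F$. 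Therefore $F=\omega(I)$ is an $\omega$-filter. (The choice function $x\mapsto y_x$ is inessential: one may instead generate $I$ by $\{y\in A\mid y^{\perp}=x^{\perp\perp}\text{ for some }x\in F\}$.)

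The main obstacle is exactly the inclusion $\omega(I)\subseteq F$ in $\ref{quacomofli1}\Rightarrow\ref{quacomofli2}$: one must return from the single element $z\leq y_{x_1}\vee\cdots\vee y_{x_n}$ to membership in $F$, and the delicate point is that $(x\vee y)^{\perp}$ is \emph{not} $x^{\perp}\cap y^{\perp}$ but the $\Gamma$-join $x^{\perp}\vee^{\Gamma}y^{\perp}$; it is precisely quasicomplementedness — through $y_{x}^{\perp}=x^{\perp\perp}$ and the De Morgan identity turning the $\Gamma$-join of the $x_i^{\perp\perp}$ into $(x_1\odot\cdots\odot x_n)^{\perp\perp}$ — that lets this $\Gamma$-join be re-absorbed into $F$. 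By contrast, $\ref{quacomofli4}\Rightarrow\ref{quacomofli1}$ and the two trivial implications require no such care.
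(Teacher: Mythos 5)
Your proof is correct, and three of the four implications ($\ref{quacomofli2}\Rightarrow\ref{quacomofli3}$, $\ref{quacomofli3}\Rightarrow\ref{quacomofli4}$, and $\ref{quacomofli4}\Rightarrow\ref{quacomofli1}$) coincide with the paper's. The difference is in $\ref{quacomofli1}\Rightarrow\ref{quacomofli2}$, which both you and the paper recognize as the substantive step, but which you handle by a different construction. The paper defines the ideal intrinsically, $I_{F}=\{a\in A\mid f^{\perp}\subseteq a^{\perp\perp}\text{ for some }f\in F\}$, checks by hand that it is a lattice ideal (the join case via $(f_1\odot f_2)^{\perp}\subseteq (a_1\vee a_2)^{\perp\perp}$, from Proposition \ref{4fxpro}), proves $\omega(I_F)\subseteq F$ using only the $\alpha$-property, and invokes quasicomplementedness solely for the reverse inclusion $F\subseteq\omega(I_F)$. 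You instead pick quasicomplements $y_x$ with $x^{\perp\perp}=y_x^{\perp}$, take the ideal they generate, and re-absorb a generic element $z\leq y_{x_1}\vee\cdots\vee y_{x_n}$ into $F$ via Proposition \ref{4fxpro}\ref{4fxpro4} and the identity $x_1^{\perp\perp}\vee^{\Gamma}\cdots\vee^{\Gamma}x_n^{\perp\perp}=(x_1\odot\cdots\odot x_n)^{\perp\perp}$, using the Boolean structure of $\Gamma(\mathfrak{A})$ (complement of $x^{\perp}$ is $x^{\perp\perp}$, which indeed follows from Propositions \ref{1fxpro} and \ref{9fxpro}, though the paper never states it explicitly). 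What each approach buys: the paper's $I_F$ needs no choice of quasicomplements and no appeal to $\Gamma(\mathfrak{A})$ being Boolean, at the price of verifying ideal-hood directly; your route gets the ideal structure for free and makes the role of quasicomplementedness (through $x^{\perp\perp}=y_x^{\perp}$) and the failure of $(x\vee y)^{\perp}=x^{\perp}\cap y^{\perp}$ more transparent, at the price of the De Morgan computation in $\Gamma(\mathfrak{A})$. Note also that your generated ideal sits inside the paper's $I_F$, so the two constructions are compatible; and in your argument the detour through Proposition \ref{qucomeq}\ref{qucomeq2} is unnecessary, since the definition of quasicomplementedness already supplies $y_x$ with $x^{\perp\perp}=y_x^{\perp}$.
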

\begin{proof}
\begin{enumerate}
\item  []\ref{quacomofli1}$\Rightarrow$\ref{quacomofli2}: Let $F$ be an $\alpha$-filter and set $I_{F}=\{a\in A|f^{\perp}\subseteq a^{\perp\perp}, for~some~f\in F\}$. It is obvious that $0\in I_{F}$. Let $a_1,a_2\in I_{F}$. So $f_{1}^{\perp}\subseteq a_{1}^{\perp\perp}$ and $f_{2}^{\perp}\subseteq a_{2}^{\perp\perp}$ for some $f_1,f_2\in F$. By \textsc{Remark} \ref{4fxpro} follows that $(f_1\odot f_2)^{\perp}\subseteq (a_1\vee a_2)^{\perp\perp}$ and it states that $a_1\vee a_2\in I_{F}$. Now let $a_1\leq a_2$ and $a_2\in I_{F}$. So there exists $f\in F$ such that $f^{\perp}\subseteq a^{\perp\perp}_{2}$ and on the other hand we have $a^{\perp\perp}_{2}\leq a^{\perp\perp}_{1}$. Hence we have $f^{\perp}\subseteq a^{\perp\perp}_{1}$ and it means that $a_1\in I_{F}$. Thus $I_F$ is a lattice ideal of $\mathfrak{A}$. Now, let $x\in \omega(I_{F})$. Consequently, $x\in a^{\perp}$ for some $a\in I_{F}$ and so $x\in f^{\perp\perp}$ for some $f\in F$. Since $F$ is an $\alpha$-filter follows that $x\in F$. Otherwise, let $x\in F$. Since $\mathfrak{A}$ quasicomplemented follows that $x^{\perp\perp}=a^{\perp}$ for some $a\in A$. It means that $a\in I_{F}$. Therefore $x\in x^{\perp\perp}=a^{\perp}\subseteq \omega(I_{F})$ and it shows that $F\subseteq \omega(I_{F})$.
\item  []\ref{quacomofli2}$\Rightarrow$\ref{quacomofli3}: It is obvious, since any coannihilator is an $\alpha$-filter.
\item  []\ref{quacomofli3}$\Rightarrow$\ref{quacomofli4}: It is obvious, since for any $x\in A$, $x^{\perp\perp}$ is a coannihilator.
\item  []\ref{quacomofli4}$\Rightarrow$\ref{quacomofli1}: Let $x\in A$. So there exists a lattice ideal $I$ such that $x^{\perp\perp}=\omega(I)$. So $x\in \omega(I)$ and it implies that $x\in y^{\perp}$ for some $y\in I$. Thus $x^{\perp\perp}\subseteq y^{\perp}\subseteq \omega(I)=x^{\perp\perp}$.
\end{enumerate}
\end{proof}
\begin{lemma}\label{propqunode}
Let $F$ be a filter of a residuated lattice $\mathfrak{A}$. Then $\alpha(F)$ is proper if and only if $F$ contains no dense element.
\end{lemma}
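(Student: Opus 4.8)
The plan is to reduce properness of $\alpha(F)$ to a single membership question and then dispose of it by an elementary computation. First I would invoke Proposition \ref{veealphafilter}\ref{veealphafilter1} to write $\alpha(F)=\bigcup_{x\in F}x^{\perp\perp}$, and recall that a filter is proper precisely when it omits $0$. Hence $\alpha(F)$ is proper if and only if $0\notin x^{\perp\perp}$ for every $x\in F$, and the whole statement comes down to understanding when $0$ lands in $x^{\perp\perp}$.

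The key observation is that, since $0$ is the bottom of $\ell(\mathfrak{A})$, one has $0\vee a=a$ for all $a\in A$; consequently $0\in x^{\perp\perp}$ means exactly that $0\vee t=1$, i.e. $t=1$, for every $t\in x^{\perp}$, which is to say $x^{\perp}=\{1\}$, i.e. $x\in\mathfrak{d}(\mathfrak{A})$. Equivalently, because $\{1\}^{\perp}=A$, the element $x$ is dense if and only if $x^{\perp\perp}=A$.

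With this in hand both directions are immediate, and I would phrase them contrapositively. If $F$ contains a dense element $d$, then $d^{\perp\perp}=A\subseteq\alpha(F)$, so $\alpha(F)=A$ is not proper. Conversely, if $\alpha(F)$ is not proper, then $0\in\alpha(F)$, so $0\in x^{\perp\perp}$ for some $x\in F$, and by the observation above such an $x$ is dense; hence $F$ contains a dense element. Combining the two implications gives the asserted equivalence.

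There is no genuine obstacle here; the only point that needs care is the elementary identification of ``$0\in x^{\perp\perp}$'' with ``$x$ is dense'', which rests solely on $0$ being the least element, together with the already recorded descriptions of $\alpha(F)$ and of proper filters.
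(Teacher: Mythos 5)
Your proof is correct and follows essentially the same route as the paper: both directions hinge on $\alpha(F)=\bigcup_{x\in F}x^{\perp\perp}$, with a dense $d\in F$ forcing $d^{\perp\perp}=A$, and conversely $0\in\alpha(F)$ forcing $x^{\perp}\subseteq 0^{\perp}=\{1\}$ for some $x\in F$. Your direct identification of ``$0\in x^{\perp\perp}$'' with ``$x$ is dense'' is just a rephrasing of the paper's step, so there is nothing to add.
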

\begin{proof}
Let $F\cap \mathfrak{d(A)}\neq \emptyset$. So there exists a dense element $d$ in $F$. By Proposition \ref{veealphafilter}\ref{veealphafilter1} we get that $A=d^{\perp\perp}\subseteq \alpha(F)$ and it implies $\alpha(F)=A$. Otherwise, if $\alpha(F)=A$ then $0\in \alpha(F)$. So there exists $x\in F$ such that $0\in x^{\perp\perp}$. Therefore $x^{\perp}\subseteq 0^{\perp}=\{1\}$ and it means that $x$ is a dense element.
\end{proof}
\begin{lemma}\label{primealpha}
Let $\mathfrak{A}$ be a quasicomplemented residuated lattice and $P$ be a prime filter. The following assertions are equivalent:
\begin{enumerate}
\item  [$(1)$ \namedlabel{primealpha1}{$(1)$}] $P$ is an $\alpha$-filter;
\item  [$(2)$ \namedlabel{primealpha2}{$(2)$}] $P$ contains no dense element;
\item  [$(3)$ \namedlabel{primealpha3}{$(3)$}] $P$ is minimal prime;
\item  [$(4)$ \namedlabel{primealpha4}{$(4)$}] $P$ contains precisely one of $x,y\in A$ such that $x^{\perp\perp}=y^{\perp}$.
\end{enumerate}
\end{lemma}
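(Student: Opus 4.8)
The plan is to run the cycle \ref{primealpha1}$\Rightarrow$\ref{primealpha2}$\Rightarrow$\ref{primealpha3}$\Rightarrow$\ref{primealpha1} to get the equivalence of the first three conditions, and then close the loop by proving \ref{primealpha3}$\Rightarrow$\ref{primealpha4}$\Rightarrow$\ref{primealpha2}. It is worth noting in advance that only the step \ref{primealpha2}$\Rightarrow$\ref{primealpha3} uses the standing assumption that $\mathfrak{A}$ is quasicomplemented; the other implications hold in an arbitrary residuated lattice.

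For \ref{primealpha1}$\Rightarrow$\ref{primealpha2}: if a proper $\alpha$-filter $P$ contained a dense element $d$, then $d^{\perp}=\{1\}$ forces $d^{\perp\perp}=\{1\}^{\perp}=A$, and being an $\alpha$-filter $P$ would then contain $d^{\perp\perp}=A$, contradicting properness (a prime filter is proper by definition). The implication \ref{primealpha2}$\Rightarrow$\ref{primealpha3} is exactly Proposition \ref{44fxpro}(\ref{44fxpro1}$\Rightarrow$\ref{44fxpro2}) applied to the quasicomplemented $\mathfrak{A}$: a prime filter disjoint from $\mathfrak{d}(\mathfrak{A})$ is minimal prime. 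Finally \ref{primealpha3}$\Rightarrow$\ref{primealpha1} is immediate from Corollary \ref{examomegfilt}\ref{examomegfilt2}, which says every minimal prime filter is an $\alpha$-filter.

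To incorporate \ref{primealpha4} I read it as: for every pair $x,y\in A$ with $x^{\perp\perp}=y^{\perp}$, the filter $P$ contains precisely one of $x,y$. For \ref{primealpha3}$\Rightarrow$\ref{primealpha4}, first recall that a minimal prime $P$ equals $D(P)=\omega(P^{c})$ by Theorem \ref{mincor}, hence is a proper $\omega$-filter and so contains no dense element by Proposition \ref{profilnoden}. Now let $x^{\perp\perp}=y^{\perp}$. Reusing the computation from the proof of Proposition \ref{qucomeq} (namely $x^{\perp}\cap y^{\perp}=x^{\perp}\cap x^{\perp\perp}=\{1\}=(x\odot y)^{\perp}$ by Propositions \ref{1fxpro}\ref{1fxpro2} and \ref{4fxpro}\ref{4fxpro2}, and $x\in x^{\perp\perp}=y^{\perp}$ by Proposition \ref{1fxpro}\ref{1fxpro3}), one gets that $x\odot y$ is dense and $x\vee y=1$. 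Since $x\vee y=1\in P$ and $P$ is prime, $P$ contains $x$ or $y$; it cannot contain both, since otherwise $x\odot y\in P$ would be a dense element of $P$. Hence precisely one. For \ref{primealpha4}$\Rightarrow$\ref{primealpha2}: given any dense $d$, note $d^{\perp\perp}=A=1^{\perp}$, so $(d,1)$ is an admissible pair for \ref{primealpha4}; since $1\in P$ necessarily, \ref{primealpha4} forces $d\notin P$, i.e. $P$ contains no dense element.

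All the steps are short. The only point I would be careful about is the precise reading of \ref{primealpha4} and the observation that density of $x\odot y$ together with $x\vee y=1$ follows from $x^{\perp\perp}=y^{\perp}$ alone — this is the reusable core of the Proposition \ref{qucomeq} argument — and that, conversely, the pair $(d,1)$ realizes a dense element as one half of such a pair, which is what makes \ref{primealpha4}$\Rightarrow$\ref{primealpha2} go through.
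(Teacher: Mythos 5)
Your proof is correct, but it is organized differently from the paper's, which runs the single cycle \ref{primealpha1}$\Rightarrow$\ref{primealpha2}$\Rightarrow$\ref{primealpha3}$\Rightarrow$\ref{primealpha4}$\Rightarrow$\ref{primealpha1}. The first two implications coincide (your inline argument for \ref{primealpha1}$\Rightarrow$\ref{primealpha2} is just Lemma \ref{propqunode} unpacked, and both proofs quote Proposition \ref{44fxpro} for \ref{primealpha2}$\Rightarrow$\ref{primealpha3}), but from there you diverge: you close the triangle directly via Corollary \ref{examomegfilt}\ref{examomegfilt2}, prove \ref{primealpha3}$\Rightarrow$\ref{primealpha4} by extracting the reusable core of Proposition \ref{qucomeq} (from $x^{\perp\perp}=y^{\perp}$ alone one gets $x\vee y=1$ and $x\odot y$ dense, so primeness plus the absence of dense elements gives ``precisely one''), and attach \ref{primealpha4} back to the rest through the pair $(d,1)$, which satisfies $d^{\perp\perp}=A=1^{\perp}$ for any dense $d$. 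The paper instead proves \ref{primealpha3}$\Rightarrow$\ref{primealpha4} via Theorem \ref{mincor}\ref{mincor3} (a minimal prime contains exactly one of $z$, $z^{\perp}$, applied after noting $x^{\perp}\cap y^{\perp}=\{1\}$ forces $x^{\perp}\subseteq P$ or $y^{\perp}\subseteq P$), and proves \ref{primealpha4}$\Rightarrow$\ref{primealpha1} directly, using quasicomplementedness to supply, for each $x\in P$, a partner $y$ with $x^{\perp\perp}=y^{\perp}$ and then $y\notin P$, $y^{\perp}\subseteq P$. What your arrangement buys is a cleaner bookkeeping of hypotheses: quasicomplementedness is confined to \ref{primealpha2}$\Rightarrow$\ref{primealpha3}, so all implications involving \ref{primealpha4} hold in an arbitrary residuated lattice, whereas the paper also needs it in \ref{primealpha4}$\Rightarrow$\ref{primealpha1}; the paper's version, on the other hand, makes \ref{primealpha4} do real work in reaching \ref{primealpha1} rather than treating it as an appendage. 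Your reading of \ref{primealpha4} as a universally quantified statement over pairs with $x^{\perp\perp}=y^{\perp}$ agrees with the paper's own use of it, and every citation you invoke (Propositions \ref{1fxpro}, \ref{4fxpro}, \ref{profilnoden}, \ref{44fxpro}, Theorem \ref{mincor}, Corollary \ref{examomegfilt}) precedes the lemma, so no circularity arises.
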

\begin{proof}
\ref{primealpha1}$\Rightarrow$\ref{primealpha2} follows by Lemma \ref{propqunode} and \ref{primealpha2}$\Rightarrow$\ref{primealpha3} follows by Proposition \ref{44fxpro}\ref{44fxpro2}.
\item []\ref{primealpha3}$\Rightarrow$\ref{primealpha4}: Let $x$ and $y$ be any pair of elements for which $x^{\perp\perp}=y^{\perp}$. Since $x\vee y=1$ and $P$ is prime so either $x\in P$ or $y\in P$. Also, $x^{\perp}\cap y^{\perp}=\{1\}$ implies that $x^{\perp}\subseteq P$ or $y^{\perp}\subseteq P$. Now we can deduce the result by Theorem \ref{mincor}\ref{mincor3}.
\item []\ref{primealpha4}$\Rightarrow$\ref{primealpha1}: Let $x\in P$ and $y$ be an element such that $x^{\perp\perp}=y^{\perp}$. Since $y\notin P$ so $y^{\perp}\subseteq P$ and it shows that $P$ is an $\alpha$-filter.
\end{proof}

In the next corollary for which we characterize quasicomplemented residuated lattice in terms of $\alpha$-filters should be compared with Proposition \ref{44fxpro}.
\begin{corollary}\label{45fxpro}
Let $\mathfrak{A}$ be a residuated lattice. The following assertions are equivalent:
\begin{enumerate}
\item  [$(1)$ \namedlabel{45fxpro1}{$(1)$}] $\mathfrak{A}$ is quasicomplemented;
\item  [$(2)$ \namedlabel{45fxpro2}{$(2)$}] any prime $\alpha$-filter is minimal prime;
\item  [$(3)$ \namedlabel{45fxpro3}{$(3)$}] any proper $\alpha$-filter is the intersection of minimal prime filters;
\item  [$(4)$ \namedlabel{45fxpro4}{$(4)$}] any proper $\alpha$-filter is contained in a minimal
prime filter.
\end{enumerate}
\end{corollary}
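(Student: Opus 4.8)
The plan is to run the cycle \ref{45fxpro1}$\Rightarrow$\ref{45fxpro2}$\Rightarrow$\ref{45fxpro3}$\Rightarrow$\ref{45fxpro4}$\Rightarrow$\ref{45fxpro1}, resting on two pillars: the filter-level characterization of quasicomplementedness in Proposition \ref{44fxpro}, and the translation, supplied by Lemma \ref{propqunode}, between a \emph{proper $\alpha$-filter} and a \emph{filter containing no dense element}; Lemma \ref{primealpha} and Corollary \ref{alphaintprimfilt} furnish the remaining glue. For \ref{45fxpro1}$\Rightarrow$\ref{45fxpro2}, suppose $\mathfrak{A}$ is quasicomplemented and let $P$ be a prime $\alpha$-filter; the implication \ref{primealpha1}$\Rightarrow$\ref{primealpha3} of Lemma \ref{primealpha} then says precisely that $P$ is a minimal prime filter.

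For \ref{45fxpro2}$\Rightarrow$\ref{45fxpro3}, let $F$ be a proper $\alpha$-filter, so that $F=\alpha(F)$. Corollary \ref{alphaintprimfilt}\ref{alphaintprimfilt2}, applied with $X=F$, gives $F=\bigcap\{P\in Spec_{\alpha}(\mathfrak{A})|F\subseteq P\}$. This family of prime $\alpha$-filters is non-empty, since otherwise the intersection would be $A$, contradicting that $F$ is proper; and by hypothesis \ref{45fxpro2} each of its members is a minimal prime filter, whence $F$ is an intersection of minimal prime filters. For \ref{45fxpro3}$\Rightarrow$\ref{45fxpro4}, a proper $\alpha$-filter $F$ is an intersection of a family of minimal prime filters which, again, must be non-empty (else $F=A$), and $F$ is then contained in any member of that family.

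For \ref{45fxpro4}$\Rightarrow$\ref{45fxpro1}, we verify that $\mathfrak{A}$ satisfies condition \ref{44fxpro3} of Proposition \ref{44fxpro}. Let $F$ be a filter of $\mathfrak{A}$ containing no dense element; by Lemma \ref{propqunode} the $\alpha$-filter $\alpha(F)$ is then proper, so by \ref{45fxpro4} it lies inside some minimal prime filter $\mathfrak{m}$, and since $F\subseteq\alpha(F)\subseteq\mathfrak{m}$ the filter $F$ itself is contained in a minimal prime filter. Thus \ref{44fxpro3} holds, and the implication \ref{44fxpro3}$\Rightarrow$\ref{44fxpro1} of Proposition \ref{44fxpro} gives that $\mathfrak{A}$ is quasicomplemented. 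The argument is short; the only points requiring a little care are ruling out the vacuous intersection $\bigcap\emptyset=A$ in the two middle steps, and invoking Lemma \ref{propqunode} at exactly the right moment so that Proposition \ref{44fxpro} can close the loop. I do not expect a genuine obstacle here.
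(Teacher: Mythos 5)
Your proposal is correct and follows essentially the same route as the paper: \ref{45fxpro1}$\Rightarrow$\ref{45fxpro2} via Lemma \ref{primealpha}, \ref{45fxpro2}$\Rightarrow$\ref{45fxpro3} via Corollary \ref{alphaintprimfilt}\ref{alphaintprimfilt2}, and \ref{45fxpro4}$\Rightarrow$\ref{45fxpro1} via Lemma \ref{propqunode} together with Proposition \ref{44fxpro}\ref{44fxpro3}. Your explicit remark that the intersections in the two middle steps cannot be vacuous (since a proper $\alpha$-filter is not $A$) is a small point the paper leaves implicit, but it does not change the argument.
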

\begin{proof}
\ref{45fxpro1}$\Rightarrow$\ref{45fxpro2} is followed by Proposition \ref{44fxpro} and Lemma \ref{primealpha}, \ref{45fxpro2}$\Rightarrow$\ref{45fxpro3} is followed by Corollary \ref{alphaintprimfilt}\ref{alphaintprimfilt2} and \ref{45fxpro3}$\Rightarrow$\ref{45fxpro4} is obvious.
\item []\ref{45fxpro4}$\Rightarrow$\ref{45fxpro1}: Let $F$ be a filter such that $F\cap \mathfrak{d(A)}=\emptyset$. By Lemma \ref{propqunode}, $\alpha(F)$ is proper and so it is contained in a minimal prime filter. It means that $F$ is contained in a minimal prime filter. Hence, the result is followed by Proposition \ref{44fxpro}\ref{44fxpro3}.
\end{proof}


We end this paper by deriving a set of equivalent assertions for any $\alpha$-filter of a residuated lattice to become principal.
\begin{lemma}\label{prielepri}
Let $\mathfrak{A}$ be a residuated lattice, $F$ be an $\alpha$-filter and $a\in F$. If for any prime $\alpha$-filter $P$ which contains $a$ we have $F\subseteq P$, then $F=\alpha(a)$.
\end{lemma}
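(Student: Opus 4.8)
The plan is to establish the two inclusions $\alpha(a)\subseteq F$ and $F\subseteq\alpha(a)$ separately, the first being immediate and the second being where the hypothesis on prime $\alpha$-filters is used.

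First I would observe that $\alpha(a)\subseteq F$ holds trivially: since $a\in F$ and $F\in\alpha(\mathfrak{A})$, the filter $F$ is an $\alpha$-filter containing $a$, and $\alpha(a)=\alpha(\{a\})$ is by definition the smallest $\alpha$-filter containing $a$, so $\alpha(a)\subseteq F$.

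For the reverse inclusion the key tool is Corollary \ref{alphaintprimfilt}\ref{alphaintprimfilt2} applied with $X=\{a\}$, which gives the representation
\[
\alpha(a)=\bigcap\{P\in Spec_{\alpha}(\mathfrak{A})\mid a\in P\}.
\]
Now every prime $\alpha$-filter $P$ occurring in this intersection is a prime $\alpha$-filter containing $a$, so by the hypothesis of the lemma we have $F\subseteq P$ for each such $P$. Intersecting over all of them yields $F\subseteq\bigcap\{P\in Spec_{\alpha}(\mathfrak{A})\mid a\in P\}=\alpha(a)$. Combining this with the first inclusion gives $F=\alpha(a)$, as desired. (Alternatively one can argue by contradiction: if some $x\in F$ were not in $\alpha(a)$, then since $\alpha(a)$ is an $\alpha$-filter with $\{x\}\nsubseteq\alpha(a)$, Corollary \ref{alphaintprimfilt}\ref{alphaintprimfilt1} would produce a prime $\alpha$-filter $P\supseteq\alpha(a)$ with $x\notin P$; but $a\in\alpha(a)\subseteq P$ forces $F\subseteq P$ by hypothesis, hence $x\in P$, a contradiction.)

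There is essentially no serious obstacle here; the only thing to get right is that the prime $\alpha$-filters appearing in the intersection from Corollary \ref{alphaintprimfilt}\ref{alphaintprimfilt2} are exactly those the hypothesis speaks about, namely prime $\alpha$-filters containing $a$, so that the hypothesis can be invoked uniformly across the intersection.
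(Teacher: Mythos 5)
Your proposal is correct and follows exactly the paper's route: the paper's proof is the one-line remark that the lemma is a direct consequence of Corollary \ref{alphaintprimfilt}\ref{alphaintprimfilt2}, and you have simply spelled out that argument (the trivial inclusion $\alpha(a)\subseteq F$ plus the intersection representation of $\alpha(a)$ over prime $\alpha$-filters containing $a$, to which the hypothesis applies). Nothing further is needed.
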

\begin{proof}
It is a direct consequence of Proposition \ref{alphaintprimfilt}\ref{alphaintprimfilt2}.
\end{proof}
\begin{proposition}\label{apriprithe}
Let $\mathfrak{A}$ be a residuated lattice. The following assertions are equivalent:
\begin{enumerate}
\item  [(1) \namedlabel{apriprithe1}{(1)}] For any $\mathscr{P}\subseteq Spec_{\alpha}(\mathfrak{A})$ and any $\alpha$-filter $F$, $F\subseteq \bigcup \mathscr{P}$ implies $F\subseteq P$ for some $P\in \mathscr{P}$;
\item  [(2) \namedlabel{apriprithe2}{(2)}] for any $\mathscr{P}\subseteq Spec_{\alpha}(\mathfrak{A})$ and any prime $\alpha$-filter $Q$, $Q\subseteq \bigcup \mathscr{P}$ implies $Q\subseteq P$ for some $P\in \mathscr{P}$;
\item  [(3) \namedlabel{apriprithe3}{(3)}] any prime $\alpha$-filter is principal;
\item  [(4) \namedlabel{apriprithe4}{(4)}] any $\alpha$-filter is principal.
\end{enumerate}
\end{proposition}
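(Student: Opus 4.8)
The plan is to prove the equivalence by running the cycle $\ref{apriprithe1}\Rightarrow\ref{apriprithe2}\Rightarrow\ref{apriprithe3}\Rightarrow\ref{apriprithe4}\Rightarrow\ref{apriprithe1}$, using Lemma \ref{prielepri} (the single-generator criterion) and Theorem \ref{alphaprfilth} (prime $\alpha$-filter separation) as the main tools. Two of the four arrows are immediate and I would dispose of them first: \ref{apriprithe1}$\Rightarrow$\ref{apriprithe2} because a prime $\alpha$-filter is in particular an $\alpha$-filter, and \ref{apriprithe4}$\Rightarrow$\ref{apriprithe1} because if $F=\alpha(a)$ and $F\subseteq\bigcup\mathscr{P}$ then $a\in P$ for some $P\in\mathscr{P}$, and then $F=\alpha(a)\subseteq P$ since $P$ is an $\alpha$-filter containing $a$. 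So all the work sits in the two middle implications.

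For \ref{apriprithe2}$\Rightarrow$\ref{apriprithe3} I would take a prime $\alpha$-filter $Q$ and apply hypothesis \ref{apriprithe2} to $\mathscr{P}:=\{P\in Spec_{\alpha}(\mathfrak{A})\mid Q\nsubseteq P\}$: if $Q\subseteq\bigcup\mathscr{P}$ held, then \ref{apriprithe2} would give $Q\subseteq P$ for some $P\in\mathscr{P}$, contrary to $P\in\mathscr{P}$; hence $Q\nsubseteq\bigcup\mathscr{P}$, so I can pick $a\in Q$ lying in no member of $\mathscr{P}$. Then any prime $\alpha$-filter containing $a$ must lie outside $\mathscr{P}$, i.e. contain $Q$, and Lemma \ref{prielepri} delivers $Q=\alpha(a)$.

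The heart of the matter is \ref{apriprithe3}$\Rightarrow$\ref{apriprithe4}, and this is where Theorem \ref{alphaprfilth} enters. Given an $\alpha$-filter $F$, I again set $\mathscr{P}:=\{P\in Spec_{\alpha}(\mathfrak{A})\mid F\nsubseteq P\}$; exactly as in the previous paragraph, Lemma \ref{prielepri} reduces the task to showing $F\nsubseteq\bigcup\mathscr{P}$. Assume for contradiction that $F\subseteq\bigcup\mathscr{P}$ and consider $\mathscr{C}:=A\setminus\bigcup\mathscr{P}=\bigcap_{P\in\mathscr{P}}(A\setminus P)$. Since each $P$ is prime, each $A\setminus P$ is $\vee$-closed, hence so is the intersection $\mathscr{C}$; also $0\in\mathscr{C}$ because a proper filter never contains $0$, so $\mathscr{C}$ is nonempty; and $\mathscr{C}\cap F=\emptyset$ by the contrary assumption. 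Theorem \ref{alphaprfilth} then furnishes a prime $\alpha$-filter $Q$ with $F\subseteq Q$ and $Q\cap\mathscr{C}=\emptyset$, that is, $Q\subseteq\bigcup\mathscr{P}$. By \ref{apriprithe3}, $Q=\alpha(c)$ for some $c\in A$; since $c\in Q\subseteq\bigcup\mathscr{P}$ there is $P\in\mathscr{P}$ with $c\in P$, whence $Q=\alpha(c)\subseteq P$ and so $F\subseteq P$ --- contradicting $P\in\mathscr{P}$. This yields $F\nsubseteq\bigcup\mathscr{P}$, and then, picking $a\in F$ outside every member of $\mathscr{P}$ and applying Lemma \ref{prielepri} (as in \ref{apriprithe2}$\Rightarrow$\ref{apriprithe3}), I get $F=\alpha(a)$.

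I expect the only delicate point to be checking that $\mathscr{C}=A\setminus\bigcup\mathscr{P}$ is a legitimate input for Theorem \ref{alphaprfilth}: $\vee$-closedness follows from primeness of the members of $\mathscr{P}$ together with the stability of $\vee$-closedness under intersection, nonemptiness from $0\in\mathscr{C}$, and disjointness from $F$ from the contrary hypothesis; after that the reduction to the prime case --- where \ref{apriprithe3} hands over a single generator --- is automatic. As an alternative organization one could instead prove $\ref{apriprithe1}\Leftrightarrow\ref{apriprithe4}$ and $\ref{apriprithe2}\Leftrightarrow\ref{apriprithe3}$ directly (each by a short Zorn/Lemma \ref{prielepri} argument) and then link the two blocks by $\ref{apriprithe3}\Rightarrow\ref{apriprithe1}$, which is the very same $\mathscr{C}$-argument; the cyclic presentation merely keeps the four proofs uniform.
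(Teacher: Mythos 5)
Your cycle is correct, and three of the four arrows ((1)$\Rightarrow$(2), (2)$\Rightarrow$(3), (4)$\Rightarrow$(1)) match the paper's proof in substance --- the paper phrases (2)$\Rightarrow$(3) contrapositively, choosing for each $a\in Q$ a prime $\alpha$-filter $P_a\ni a$ with $Q\nsubseteq P_a$ via Lemma \ref{prielepri}, which is the same idea as your set $\mathscr{P}=\{P\in Spec_{\alpha}(\mathfrak{A})\mid Q\nsubseteq P\}$. Where you genuinely diverge is (3)$\Rightarrow$(4). The paper applies Zorn's lemma to the family of proper non-principal $\alpha$-filters, takes a maximal element $M$, and shows $M$ is prime using Proposition \ref{veealphafilter} (from $a\vee b\in M$, $a,b\notin M$ one gets $\alpha(M,a)=\alpha(x)$, $\alpha(M,b)=\alpha(y)$ and then $M=\alpha(M,a\vee b)=\alpha(x)\cap\alpha(y)=\alpha(x\vee y)$, a contradiction); (3) then forces $M$ principal, killing the family. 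You instead fix an arbitrary $\alpha$-filter $F$, form $\mathscr{P}=\{P\in Spec_{\alpha}(\mathfrak{A})\mid F\nsubseteq P\}$ and the $\vee$-closed set $\mathscr{C}=A\setminus\bigcup\mathscr{P}$ (nonempty since $0\in\mathscr{C}$, $\vee$-closed as an intersection of complements of prime filters), and use the separation Theorem \ref{alphaprfilth} plus (3) and Lemma \ref{prielepri} to rule out $F\subseteq\bigcup\mathscr{P}$; this prime-avoidance argument is sound (including the edge case $F=A$, handled automatically since $0\notin\bigcup\mathscr{P}$). The trade-off: the paper's ``maximal non-principal is prime'' argument needs the arithmetic of Proposition \ref{veealphafilter} but not the separation theorem at this step, whereas your route makes Lemma \ref{prielepri} and Theorem \ref{alphaprfilth} do all the work uniformly across both middle implications, at the cost of an extra indirect contradiction; both ultimately rest on Zorn (yours through Theorem \ref{alphaprfilth}).
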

\begin{proof}
\item [\ref{apriprithe1}$\Rightarrow$\ref{apriprithe2}:] It is obvious.
\item [\ref{apriprithe2}$\Rightarrow$\ref{apriprithe3}:]  Let $Q$ be a prime $\alpha$-filter which is not principal. By Lemma \ref{prielepri} for any $a\in Q$ there exists a prime $\alpha$-filter $P_a$  such that $Q\nsubseteq P_a$. We have $Q\subseteq \bigcup\{P_a|a\in Q\}$ and so $Q\subseteq P_a$ for some $a\in Q$; a contradiction.
\item [\ref{apriprithe3}$\Rightarrow$\ref{apriprithe4}:]  It is obvious that $A$ is a principal $\alpha$-filter. Set $\Psi$ be the set of all proper non-principal $\alpha$-filters of $\mathfrak{A}$. Let $\Psi$ be not empty. In a routine way we can show that $\Psi$ satisfies the conditions of Zorn's lemma. Let $M$ be a maximal element of $\Psi$. Let $a\vee b\in M$ and $a,b\notin M$. Thus there exist $x,y\in A$ such that $\alpha(M,a)=\alpha(x)$ and $\alpha(M,b)=\alpha(y)$. Applying Proposition \ref{veealphafilter}, it follows that $M=\alpha(x\vee y)$; a contradiction. Thus $M$ is a prime $\alpha$-filter and so it is principal; a contradiction. Hence, $\Psi=\emptyset$ and it gets the result.
\item [\ref{apriprithe4}$\Rightarrow$\ref{apriprithe1}:]  Let $F$ be an $\alpha$-filter such that $F\subseteq \cup \mathscr{P}$ for some $\mathscr{P}\subseteq Spec_{\alpha}(\mathfrak{A})$. By hypothesis, $F=\alpha(a)$ for some $a\in A$. So there exists $P\in \mathscr{P}$ such that $a\in P$. Hence, $F\subseteq P$ and it proves the implication.
\end{proof}
\begin{lemma}\label{alphaprlem}
If any coannulet of a residuated lattice is principal, then any its prime $\alpha$-filter is a minimal prime filter.
\end{lemma}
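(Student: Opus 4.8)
The plan is to obtain the lemma from material already in hand. By Proposition~\ref{coannprinstar}, the assumption that every coannulet of $\mathfrak{A}$ is principal already forces $\mathfrak{A}$ to be quasicomplemented; and by the implication \ref{45fxpro1}$\Rightarrow$\ref{45fxpro2} of Corollary~\ref{45fxpro}, in a quasicomplemented residuated lattice every prime $\alpha$-filter is a minimal prime filter. Composing these two statements proves the lemma outright, so at the level of invoking earlier results the argument is essentially a single line, and I would present it that way.

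For the record I would also include the unpacked version, which does not pass through Corollary~\ref{45fxpro}. Let $P$ be a prime $\alpha$-filter; by Theorem~\ref{mincor} it suffices to show $P=D(P)$. Since $P$ is a proper $\alpha$-filter we have $P=\alpha(P)$, so Lemma~\ref{propqunode} gives that $P$ contains no dense element. Fix $x\in P$. The hypothesis produces $y\in A$ with $x^{\perp}=\mathscr{F}(y)$; then $y\in x^{\perp}$, hence $x\vee y=1\in P$, and Proposition~\ref{1fxpro}\ref{1fxpro4} yields $x^{\perp\perp}=y^{\perp}$.

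Next I would show $y\notin P$. If $y\in P$, then $x\odot y\in P$, yet by Proposition~\ref{4fxpro}\ref{4fxpro2} and Proposition~\ref{1fxpro}\ref{1fxpro2},
\[
(x\odot y)^{\perp}=x^{\perp}\cap y^{\perp}=x^{\perp}\cap x^{\perp\perp}=\{1\},
\]
so $x\odot y$ is a dense element lying in $P$, a contradiction. Therefore $y\notin P$, and from $x\vee y=1$ with $y\in P^{c}$ we conclude $x\in\omega(P^{c})=D(P)$. This gives $P\subseteq D(P)$; the reverse inclusion is immediate since $P$ is prime (if $a\vee z=1\in P$ with $z\notin P$ then $a\in P$). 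Hence $P=D(P)$, and $P$ is a minimal prime filter by Theorem~\ref{mincor}. The one point to watch is that the principality hypothesis is used only to extract, for the fixed $x\in P$, an element $y$ which simultaneously satisfies $x\vee y=1$ and $x^{\perp\perp}=y^{\perp}$; beyond that the proof is purely formal manipulation with $\perp$, primeness of $P$, and the density observation, so I do not anticipate a real obstacle.
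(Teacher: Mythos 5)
Your proposal is correct, and in fact contains two valid arguments. The ``unpacked'' version is essentially the paper's own proof: the paper likewise fixes $x\in P$, writes $x^{\perp}=\mathscr{F}(y)$, computes $(x\odot y)^{\perp}=x^{\perp}\cap y^{\perp}=x^{\perp}\cap x^{\perp\perp}=\{1\}$, concludes $x\odot y\notin P$, hence $y\notin P$ and $x\in D(P)$, and finishes with Theorem \ref{mincor}; the only cosmetic difference is that you obtain ``$P$ contains no dense element'' from Lemma \ref{propqunode} applied to $P=\alpha(P)$, while the paper cites Proposition \ref{alpfiltpro}\ref{alpfiltpro2} (since $d$ dense means $d^{\perp}=0^{\perp}=\{1\}$, so $d\in P$ would force $0\in P$). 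Your one-line route --- Proposition \ref{coannprinstar} gives quasicomplementedness, and then Corollary \ref{45fxpro} (\ref{45fxpro1}$\Rightarrow$\ref{45fxpro2}) gives that every prime $\alpha$-filter is minimal prime --- is a genuinely different derivation and is not circular, since Corollary \ref{45fxpro} rests only on Proposition \ref{44fxpro}, Lemma \ref{propqunode} and Lemma \ref{primealpha}, none of which uses the present lemma. What the composition buys is brevity; what the direct proof buys is independence from the quasicomplemented machinery and an explicit witness $y$ with $x\vee y=1$, $y\notin P$, which is the form actually exploited later in Proposition \ref{alphaprpro}. Either write-up is acceptable.
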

\begin{proof}
Let $P$ be a prime $\alpha$-filter of $\mathfrak{A}$. By Proposition \ref{alpfiltpro}\ref{alpfiltpro2} it is obvious that $P$ has no any dense element. Let $x\in P$. By hypothesis $x^{\perp}=\mathscr{F}(y)$ for some $y\in A$. So we have $(x\odot y)^{\perp}=x^{\perp}\cap y^{\perp}=x^{\perp}\cap x^{\perp\perp}=\{1\}$ and so $x\odot y\notin P$. Hence $y\notin P$ and so $x\in D(P)$. By Theorem \ref{mincor} the result holds.
\end{proof}
\begin{proposition}\label{alphaprpro}
Let $\mathfrak{A}$ be a residuated lattice. The following assertions are equivalent:
\begin{enumerate}
\item  [$(1)$ \namedlabel{alphaprpro1}{$(1)$}] Any $\alpha$-filter is principal;
\item  [$(2)$ \namedlabel{alphaprpro2}{$(2)$}] any $\omega$-filter is principal;
\item  [$(3)$ \namedlabel{alphaprpro3}{$(3)$}] any coannulet is principal and any minimal prime filter is non-dense;
\item  [$(4)$ \namedlabel{alphaprpro4}{$(4)$}] any prime $\alpha$-filter is principal.
\end{enumerate}
Further, any of the above assertions implies that $\mathfrak{A}$ is quasicomplemented.
\end{proposition}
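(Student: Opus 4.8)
The plan is to prove the cycle $\ref{alphaprpro1}\Rightarrow\ref{alphaprpro2}\Rightarrow\ref{alphaprpro3}\Rightarrow\ref{alphaprpro4}$ and then close it by invoking Proposition \ref{apriprithe}, whose implication $\ref{apriprithe3}\Rightarrow\ref{apriprithe4}$ is precisely $\ref{alphaprpro4}\Rightarrow\ref{alphaprpro1}$; so the return arrow costs nothing extra. The trailing assertion (each condition forces quasicomplementedness) will follow once the cycle is in place, because \ref{alphaprpro3} contains the clause ``any coannulet is principal'' and Proposition \ref{coannprinstar} converts exactly that into quasicomplementedness.

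First I would handle $\ref{alphaprpro1}\Rightarrow\ref{alphaprpro2}$, which is immediate: by Proposition \ref{alphafofilter}\ref{alphafofilter2} every $\omega$-filter is an $\alpha$-filter, hence principal by hypothesis. For $\ref{alphaprpro2}\Rightarrow\ref{alphaprpro3}$ I would split into the two clauses. Every coannulet lies in $\gamma(\mathfrak{A})$, which is a sublattice of $\Omega(\mathfrak{A})$ (recalled just before Proposition \ref{profilnoden}); hence every coannulet is an $\omega$-filter and so is principal. For the second clause, take a minimal prime filter $\mathfrak{m}$: by Theorem \ref{mincor} we have $\mathfrak{m}=D(\mathfrak{m})=\omega(\mathfrak{m}^{c})$, an $\omega$-filter, so $\mathfrak{m}=\mathscr{F}(a)$ for some $a\in A$ by hypothesis. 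Being a proper $\omega$-filter, $\mathfrak{m}$ contains no dense element by Proposition \ref{profilnoden}, so $a$ is not dense, and therefore $\mathfrak{m}^{\perp}=(\mathscr{F}(a))^{\perp}=a^{\perp}\neq\{1\}$ by Proposition \ref{1fxpro}\ref{1fxpro4}; thus $\mathfrak{m}$ is non-dense.

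For $\ref{alphaprpro3}\Rightarrow\ref{alphaprpro4}$, let $P$ be a prime $\alpha$-filter. Since any coannulet is principal, Lemma \ref{alphaprlem} yields that $P$ is a minimal prime filter, and then the second clause of \ref{alphaprpro3} gives that $P$ is non-dense. By Proposition \ref{nondepri} a non-dense prime filter is a coannulet, so $P\in\gamma(\mathfrak{A})$, and by hypothesis $P$ is principal. This closes the cycle; since each of \ref{alphaprpro1}--\ref{alphaprpro4} then implies \ref{alphaprpro3}, the concluding remark follows from Proposition \ref{coannprinstar}.

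I expect the main obstacle to be the non-density of minimal prime filters in the step $\ref{alphaprpro2}\Rightarrow\ref{alphaprpro3}$: one first has to recognise a minimal prime filter as an $\omega$-filter via $\mathfrak{m}=\omega(\mathfrak{m}^{c})$, then use principality to extract a generator $a$, and finally combine Proposition \ref{profilnoden} with $\mathfrak{m}^{\perp}=a^{\perp}$ to exclude density. Everything else amounts to short, direct applications of results already available.
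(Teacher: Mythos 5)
Your proof is correct and follows essentially the same route as the paper: the same cycle $\ref{alphaprpro1}\Rightarrow\ref{alphaprpro2}\Rightarrow\ref{alphaprpro3}\Rightarrow\ref{alphaprpro4}\Rightarrow\ref{alphaprpro1}$, using Proposition \ref{alphafofilter}\ref{alphafofilter2}, the inclusion $\gamma(\mathfrak{A})\subseteq \Omega(\mathfrak{A})$ together with Proposition \ref{profilnoden}, Lemma \ref{alphaprlem} with Proposition \ref{nondepri}, Proposition \ref{apriprithe}, and finally Proposition \ref{coannprinstar} for quasicomplementedness. The only cosmetic difference is that you argue the non-density of a minimal prime filter directly (generator not dense, hence $\mathfrak{m}^{\perp}=a^{\perp}\neq\{1\}$) where the paper phrases it as a contradiction, which is the same argument.
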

\begin{proof}
\item  [\ref{alphaprpro1}$\Rightarrow$\ref{alphaprpro2}:] It is obvious, by Proposition \ref{alphafofilter}\ref{alphafofilter2}.
\item  [\ref{alphaprpro2}$\Rightarrow$\ref{alphaprpro3}:] By $\gamma(\mathfrak{A})\subseteq \Omega(\mathfrak{A})$ follows that any coannulet is principal. Let $\mathfrak{m}$ be a minimal prime filter. So $\mathfrak{m}$ is an $\omega$-filter and it means that $\mathfrak{m}=\mathscr{F}(x)$ for some $x\in A$. If $\mathfrak{m}$ is dense, follows that $x^{\perp}=1$ and it contradicts with Proposition \ref{profilnoden}.
\item  [\ref{alphaprpro3}$\Rightarrow$\ref{alphaprpro4}:] Let $P$ be a prime $\alpha$-filter. By Lemma \ref{alphaprlem}, $P$ is a minimal prime filter and so it is non-dense. By proposition \ref{nondepri}, $P$ is a coannulet and this means that $P$ is principal.
\item  [\ref{alphaprpro4}$\Rightarrow$\ref{alphaprpro1}:] It  follows by Proposition \ref{apriprithe}.

The rest is evident by Proposition \ref{coannprinstar}.
\end{proof}


\end{document}